\DeclareMathAlphabet{\eufrak}{U}{}{}{}  
\SetMathAlphabet\eufrak{normal}{U}{euf}{m}{n}
\SetMathAlphabet\eufrak{bold}{U}{euf}{b}{n}
\numberwithin{equation}{section}
\def\real{{\mathord{{\rm I\kern-2.8pt R}}}}        
\def\inte{{\mathord{{\rm I\kern-2.8pt N}}}}
\def\PP{{\mathord{{\rm I\kern-2.8pt P}}}}
\def\real{{\mathord{\mathbb R}}}
\def\inte{{\mathord{\mathbb N}}}
\def\qu{{\mathord{\mathbb Q}}}
\def\H{{\mathord{\mathbb H}}}
\def\E{\mathbb{E}}
\def\P{\mathbb{P}}
\newcommand{\ao}{a^{\rm op}}
\newcommand{\ap}{a^{\rm pr}}
\newcommand{\var}{{\rm Var}}
\newcommand{\F}{\mathcal{F}}
\newcommand{\M}{\mathcal{M}}
\newcommand{\Ra}{\mathcal{R}}
\newcommand{\Z}{\mathcal{Z}}
\newcommand{\Lm}{\mathcal{L}_{+}}
\newcommand{\Dl}{\D(L)}
\newcommand{\Dlp}{\D^{\rm pr}(L)}
\newcommand{\Op}{\mathcal{O}}
\newcommand{\Ri}{\mathcal{R}^\infty}
\newcommand{\D}{\mathcal{D}}
\newcommand{\A}{\mathcal{A}}
\newcommand{\B}{\mathcal{B}}
\newcommand{\X}{\mathcal{X}}
\newcommand{\ind}{\textbf{1}}
\newcommand{\LT}{L^{\infty}(\Omega, \F_T,\P)}
\newcommand{\Lt}{L^{\infty}(\Omega, \F_t,\P)}
\newcommand{\lk}{\left\{\,}
\newcommand{\rk}{\right\}}
\newcommand{\mk}{\;\big|\;}
\newcommand{\lp}{\left[}
\newcommand{\rp}{\right]}
\DeclareMathOperator*{\es}{ess\,sup}
\newcommand{\ztu}{_{t\in[0,T]}}
\newcommand{\rt}{\rho_t}
\newcommand{\cd}{c\`adl\`ag }
 \newcommand{\cde}{c\`adl\`ag}
\newtheorem{prop}{Proposition}[section]
\newtheorem{lemma}[prop]{Lemma}
\newtheorem{definition}[prop]{Definition}
\newtheorem{corollary}[prop]{Corollary}
\newtheorem{theorem}[prop]{Theorem}
\newtheorem{remark}[prop]{Remark}
\newtheorem{remarks}[prop]{Remarks}
\newtheorem{example}[prop]{Example}
\def\var{{\mathrm{{\rm Var}}}}
\def\tL{\tilde{L}}
\def\tD{\tilde{D}}
\def\tY{\tilde{Y}}
\def\tZ{\tilde{Z}}
\def\tK{\tilde{K}}
\def\tX{\tilde{X}}
\def\S{\mathcal{S}}
\def\H{\mathcal{H}}
\begin{document}

\title{Risk measures for processes and BSDEs}

\author{
Irina Penner\footnote{\texttt{penner@math.hu-berlin.de}}\\ Humboldt-Universit\"at zu Berlin\\Unter den Linden 6, 10099, Berlin, Germany \and Anthony R\'eveillac%
\footnote{\texttt{anthony.reveillac@ceremade.dauphine.fr}} \\ Universit\'e Paris-Dauphine \\ CEREMADE UMR CNRS 7534 \\Place du Mar\'echal De Lattre De Tassigny \\ 75775 Paris cedex 16 France \\\\}

\maketitle

{\small \noindent \textbf{Abstract:} The paper analyzes risk assessment for cash flows in continuous time using the notion of convex risk measures for processes. By combining a decomposition result for optional measures, and a dual representation of a convex risk measure for bounded \cd processes, we show that this framework provides a systematic approach to the both issues of model ambiguity, and  uncertainty about the time value of money. We also establish  a link between risk measures for processes and BSDEs. 
\normalsize
\newline
}

{\small \noindent \textit{Key words:} Convex risk measures for processes, Discounting ambiguity,  Model ambiguity,  Cash subadditivity, Decomposition of optional measures, BSDEs.~\newline
\normalsize
}
\noindent
{\small \noindent \textit{AMS 2010 subject classification:} Primary: 60G07; Secondary: 91B30, 91B16, 60H10, 60G40.~\newline
 \noindent \textit{JEL subject classification:} D81.~\newline
\normalsize
}

\section{Introduction}
Classical risk assessment  methods in Mathematical Finance 
focus on uncertain payoffs, that are described by random variables on some probability space. In this context, 
the payments are usually assumed to be discounted, and their timing 
does not matter for the risk evaluation beyond that. However, the assumption  that time value of money can be resolved by a simple discounting procedure is too restrictive in many situations. 
The purpose of the present paper is to provide a risk assessment method in continuous time, that accounts 
not only for model ambiguity, but also for uncertainty about time value of money. 

An axiomatic approach to assessing risks in Mathematical Finance was initiated in \cite{adeh97,adeh99, fs2, fr2} by introducing the concepts of coherent and convex monetary risk measures. 
One of the main axioms of a monetary risk measure, which  distinguishes it from a classical utility functional, 
is cash invariance. 
A cash invariant risk measure computes the minimal capital requirement, that has to be added to a position in order to make it acceptable. 
On the other hand, as argued in \cite{er08}, cash invariance is a too stringent requirement, since it postulates that future payoffs and present capital reserves are expressed in terms of the same num\'eraire. 
Therefore, while monetary risk measures provide a robust method to deal with model ambiguity, they do not allow one to deal with the issue of discounting ambiguity. 
To remedy this drawback, a new type of risk measures was introduced in \cite{er08}, where the axiom of cash invariance is replaced by cash subadditivity. 

It was noted in \cite{afp9}, that risk measures for processes introduced in \cite{cdk4,cdk6} provide an alternative approach to the problem of discounting uncertainty. 
The more flexible framework of stochastic processes allows one to relax the axiom of cash invariance without loosing the interpretation  of a risk measure as a minimal capital requirement. 
Consequently, risk measures for processes provide a natural framework to deal with both model ambiguity, and uncertainty about time value of money. Moreover, uncertainty about time value of money has a rather general interpretation in this context: It includes interest rate ambiguity, but also robust optimal stopping problems for american type options as in \cite{rie9, BayraktarYao11a, BayraktarYao11b}. And restricted to random variables, risk measures for processes reduce to cash subadditive risk measures introduced in \cite{er08}. The general structure  becomes visible through the robust representation of a convex risk measure for processes given in \cite[Theorem 3.8, Corollary 3.9]{afp9} in discrete time framework. 
One of the main goals of the present paper is to extend this result of \cite{afp9} to continuous time framework. 
It requires two steps: a dual representation of a monetary convex risk measure on  the set of bounded \cd processes in terms of suitably penalized optional measures, and a decomposition of optional measures into the  model and the discounting components.

The latter decomposition result is of independent mathematical interest. It provides a Fubini-type disintegration of a positive finite measure on the optional $\sigma$-field into a randomized stopping time $D$, which defines a random measure on the time-axis, and into a local martingale $L$, which can be essentially seen as a model on the underlying probability space.  In discrete time, 
such decomposition was proved in \cite[Theorem 3.4]{afp9}; a continuous time version appeared independently in \cite[Theorem 2.1]{kard10}. 
In Theorem~\ref{decomposition} we complement the result of \cite{kard10} by providing necessary and sufficient conditions  for a couple $(L,D)$ to define an optional measure. 
We also give a more precise statement on the uniqueness of the decomposition, and, in difference to \cite{kard10}, a direct proof of it. 

Omitting technical details, our discussion shows that taking expectation on the optional  $\sigma$-field 
essentially amounts to computing expectation of a discounted process on the underlying probability space. A robust representation of a risk measure for processes in terms of optional measures as in   \cite{afp9} seems therefore fairy natural. 
Mathematical precision of this idea is however technically demanding in continuous time framework, since there is no dominating measure on the optional $\sigma$-field, that would allow one to apply the usual $L^\infty$-$L^1$ duality as in the context of  random variables. 
A general dual representation of a convex risk measure for bounded \cd processes given in \cite[Theorem 3.3]{cdk4} involves  
\emph{pairs} of optional and predictable measures, respectively. 
However, all examples given in \cite{cdk4}, and also examples of risk measures defined by  BSDEs in the present paper can be represented in terms of ordinary optional measures only. We provide therefore conditions, under which the representation from \cite{cdk4} reduces to such a simplified form. 

One of the reasons for popularity of classical risk measures is their well established relation to the concepts of BSDEs and $g$-expectations in continuous time Brownian framework. 
The papers \cite{peng04, ro6, bek8} were among the first to identify a solution of a BSDE with a convex driver as a time consistent dynamic risk measure. 
The strong notion of cash invariance in this context is reflected by the condition that the driver of the BSDE does not depend on the current level of the risk  $y$. If the driver does depend on $y$ and is monotone, the solution to the corresponding BSDE becomes cash subadditive; this was noted in \cite{er08}. 

In  the present paper we aim to establish an analogous link between risk measures for processes and BSDEs.
The results of \cite{er08} suggest to consider to this end BSDEs with monotone convex drivers, which in our case should depend on the whole path of the process. Indeed, we show that a BSDE with a convex monotone generator defines a time consistent dynamic convex risk measure for processes, if the generator  depends  on the sum $X+Y$ of the current levels of the capital requirement $Y$, and the cumulated cash flow $X$. Moreover, one may add a reflection term to such a BSDE,  ensuring that the sum $Y+X$  stays above zero. The resulting reflected BSDE still fits into the format of risk measures for processes. Whereas dependence of the driver on $Y+X$ corresponds to interest rate ambiguity, the reflection term appears in case of uncertainty about stopping times; this becomes visible in the dual representations we provide for the corresponding BSDEs.

The paper is organized as follows: After fixing setup and notation in Section~\ref{prelim}, and recalling basic facts about risk measures for processes in Section~\ref{sec:def}, we focus on the structure of optional measures in Section~\ref{sec:dec}. This section is presented in a self-contained way, and might be read independently of the rest of the paper.  The main result here is Theorem~\ref{decomposition},  which provides decomposition of optional measures.  The predictable case is treated in Proposition~\ref{prop:pr}; the section ends with the discussion of how one may associate a probability measure  to the local martingale appearing in the decomposition. Section~\ref{sec:robrep} deals with duality theory for bounded \cd processes. 
Section~\ref{sec:discamb} combines the results of Sections~\ref{sec:dec} 
and \ref{sec:robrep} by providing a general robust representation of a monetary convex risk measure for processes; 
Section~\ref{BSDE} is devoted to BSDEs.
Some technical results used in Section~\ref{BSDE} are proved in the Appendix.

\section{Preliminaries and notation}\label{prelim}

In this paper we consider a filtered probability space $(\Omega,\mathcal{F}_T,(\mathcal{F}_t)_{t\in [0,T]},\P)$ satisfying usual conditions. The time horizon $T$ is a fixed number in $[0,\infty]$. For $T=\infty$ we assume that $\F_T=\sigma (\cup_{t\in[0,\infty)}\F_t)$. We denote by $\mathcal{O}$ (respectively by $\mathcal{P}$) the optional (respectively predictable) $\sigma$-field with respect to $(\mathcal{F}_t)_{t\in[0,T]}$. 
For any $\F_T\times[0,T]$ measurable process $X$ we denote by $^{o}X$ (respectively $^{p}X$) its optional (respectively predictable) projection. 

We use \cd versions of any (local) martingales. For an adapted c\`adl\`ag 
process $X$ we denote by $X^c$ the continuous part of $X$, and by $\Delta_\tau X$ the jump of $X$ at a stopping time $\tau$ with $0\le\tau\le T$, i.e., $\Delta X_\tau:=X_\tau-X_{\tau-}$. 
${\rm Var}(X)$ denotes the variation of $X$, $[X]$ the quadratic variation, and $\langle X\rangle$ the continuous part of quadratic variation, as long as these processes are well defined. For any two adapted c\`adl\`ag 
processes $X$ and $Y$ we write $X\le Y$, if $X_t\le Y_t$ for all $t$ $\P$-a.s..

As usually,  $\int_t^\cdot$ denotes the (stochastic) integral over $(t,\cdot]$. If the lower bound $t$ should be included into the integration area, we use the notation  $\int_{[t,\cdot]}$. 

By $\Ri$ we denote the set of all adapted c\`adl\`ag processes $X$ that are essentially bounded, i.e., such that
\[
\|X\|_{\Ri}:=\|X^*\|_{L^\infty}<\infty,\quad\text{where}\quad 
X^*:=\sup_{0\le t\le T}|X_t|.
\]

\section{Convex risk measures for processes}\label{sec:def}

The notion of monetary convex risk measures for processes, that we use in this paper, was introduced in \cite{cdk4}. It was also studied in \cite{cdk5}, \cite{cdk6}, \cite{afp9}. 
In this section we recall definitions and some basic results from these papers.  

A process $X\in\Ri$ should be understood in our framework as a value process, which models the evolution of some financial value. 
It can also be seen as a \emph{cumulated} cash flow.  For instance, 
the process $m\ind_{[t,T]}$  describes a single payment of $m$ amounts of cash at time $t\le T$. This interpretation is in line with the axiom of cash invariance in the next definition.

\begin{definition}\label{def:rm}
 A map $\rho:\,\Ri\to\real$ is called a \emph{monetary convex risk measure for processes} if it satisfies the following properties:
\begin{itemize}
 \item Cash invariance: for all $m\in\real$,
\[\rho(X+m\ind_{[0,T]})=\rho(X)-m;\]
\item
(Inverse) Monotonicity: $\rho(X)\ge\rho(Y)$ if $X\le Y$;
\item
Convexity: for all $\lambda\in[0,1]$,
\[
\rho(\lambda X+(1-\lambda)Y)\le\lambda\rho(X)+(1-\lambda)\rho(Y);
\]
\item
{Normalization}: $\rho(0)=0$.
\end{itemize}
A convex risk measure is called a \emph{coherent risk measure for processes} if it has in addition 
the following property for all $X\in\Ri$:
\begin{itemize}
\item
{Positive homogeneity}: for all $\lambda\in\real$ with $\lambda\ge0$,
\[
\rho(\lambda X)=\lambda\rho(X).
\] 
\end{itemize}
\end{definition}

\begin{remark}
If $\rho$ is a monetary convex risk measure for processes, the functional $\phi:=-\rho$ defines a \emph{monetary} or \emph{money based utility functional}, which is sometimes alternatively used in the literature. 
\end{remark}

\begin{remarks}\label{rem:ac}
\begin{enumerate}
 \item The axioms of inverse monotonicity and convexity in Definition~\ref{def:rm} go back to the classical utility theory, and have obvious interpretations. Normalization is assumed merely for notational convenience, any convex risk measure $\tilde\rho$ with $\tilde\rho(0)\in\real$ can be normalized by passing to $\rho:=\tilde\rho-\tilde\rho(0)$. 
\item Cash invariance gives rise to the \emph{monetary} interpretation of a risk measure as follows: 
We define the \emph{acceptance set} of a monetary convex risk measure as
\[
 \A:=\lk X\in\Ri\mk \rho(X)\le0\rk.
\]
By convexity and monotonicity the set $\A$ is convex and solid. Cash invariance yields the following representation of a risk measure:
\begin{equation}\label{defcr}
\rho(X)=\inf\lk m\in \real\mk X+m{\ind_{[0, T]}}\in\A\rk.
\end{equation}
In other words, $\rho(X)$ is the \emph{minimal capital requirement}, that has to be
added to the process $X$ at time $0$ in order to make it acceptable. Conversely, a functional defined by \eqref{defcr} for a given convex solid set $\A$ is a (not necessarily normalized) monetary convex risk measure for processes.
\end{enumerate}
\end{remarks}
\noindent
In difference to a monetary risk measure for random variables, cf., e.g., \cite[Definition 4.1]{fs11}, 
the axiom of cash invariance in Definition~\ref{def:rm} specifies the timing of the cash flow: Only payments made  \emph{at the same time} as the risk assessment shift it in a linear way. This makes risk measures for processes sensitive to the timing of the payment, and establishes a conceptional difference to the more common notion of risk measures for random variables. 
Even if restricted to random variables, i.e., to processes of the form $X\ind_{[T]}$ for some $X\in\LT$,  a risk measure in the sense of Definition~\ref{def:rm}  does not reduce to a risk measure in the sense of \cite[Definition 4.1]{fs11}. This aspect was noted in \cite[Section 5]{afp9}, 
and it can be made precise using the notion of cash subadditivity. 
\begin{definition}\label{def:ca}
A convex risk measure for processes $\rho$ is called
\begin{itemize}
\item \emph{cash subadditive}, if for all $t\geq 0$ and $m\in \real$
\begin{align*}
\rho(X+m1_{[t,T]})&\geq \rho(X)-m\;\;\text{for}\;\; m\geq 0\\(\text{resp.}\; &\leq\;\;\text{for}\; m\leq 0);
\end{align*}
\item \emph{cash additive at $t$} for some $t>0$, if
\[
\rho(X+m1_{[t,T]})=\rho(X)-m,\quad \forall\; m\in\real;
\]
\item \emph{cash additive}, if it is cash additive at all $t\in[0,T]$.
\end{itemize}
\end{definition}
\noindent
The notion of cash subadditivity was introduced by El Karoui and Ravanelli~\cite{er08} in the context of risk measures for random variables. 
It appears naturally in the context of risk measures for processes,  as noted in \cite[Proposition 5.2]{afp9}. 
\begin{prop}\label{prop:ca}
Every convex risk measure for processes is cash subadditive. 
\end{prop}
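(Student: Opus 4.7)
The plan is to reduce cash subadditivity to a direct comparison with cash invariance via inverse monotonicity, by comparing the shift $m\ind_{[t,T]}$ with the global shift $m\ind_{[0,T]}$.

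First I would fix $X\in\Ri$ and $t\in[0,T]$, and observe that $m\ind_{[t,T]}$ and $m\ind_{[0,T]}$ differ only on $[0,t)$, where the former vanishes and the latter equals $m$. This immediately yields a pathwise pointwise order depending on the sign of $m$: if $m\ge 0$, then $m\ind_{[t,T]}\le m\ind_{[0,T]}$, and thus $X+m\ind_{[t,T]}\le X+m\ind_{[0,T]}$; if $m\le 0$, then $m\ind_{[t,T]}\ge m\ind_{[0,T]}$, and thus $X+m\ind_{[t,T]}\ge X+m\ind_{[0,T]}$.

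Next I would apply inverse monotonicity in each case. For $m\ge 0$ it gives $\rho(X+m\ind_{[t,T]})\ge\rho(X+m\ind_{[0,T]})$, and cash invariance then yields $\rho(X+m\ind_{[t,T]})\ge\rho(X)-m$. For $m\le 0$ the reversed inequality gives $\rho(X+m\ind_{[t,T]})\le\rho(X+m\ind_{[0,T]})=\rho(X)-m$. These are exactly the two conditions in the definition of cash subadditivity, so the result follows.

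There is really no obstacle here: the statement is a one-line consequence of inverse monotonicity together with the already postulated cash invariance at time $0$. The conceptual point worth emphasising is that cash invariance at time $0$ plus monotonicity automatically encodes a weak one-sided form of cash invariance at arbitrary later times $t$, because shifting a future payment later (rather than paying it today) cannot decrease the risk of the resulting position.
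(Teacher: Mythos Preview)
Your proof is correct and is precisely the argument the paper has in mind: the paper's proof consists of the single line ``Follows directly from monotonicity and cash invariance,'' and you have simply spelled out the comparison $m\ind_{[t,T]}\lessgtr m\ind_{[0,T]}$ that makes this work.
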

\begin{proof}
Follows directly from monotonicity and cash invariance.
\end{proof}
Due to cash subadditivity property, risk measures for processes provide a more flexible framework than risk measures for random variables. They allow to capture not only model uncertainty, but also uncertainty about the time value of money. This will be made precise in Section~\ref{sec:discamb}, and  
requires two steps: The first step consists in providing a dual representation of a monetary convex risk measure on $\Ri$ in terms of suitably penalized optional measures. In the second step,  optional measures will be decomposed into state price deflators, describing the  model component, and randomized stopping times, describing the discounting component.  We begin with the latter decomposition result for optional measures.  

\section{Decomposition of optional measures}\label{sec:dec}

In this section we analyze the structure of finite positive measures $\mu$ on the optional $\sigma$-field $\Op$, that have no mass on $\P$-evanescent sets.  Such measures are called \emph{optional $\P$-measures} in \cite{dm2}, here we simply call them \emph{optional measures}. 

The set of optional measures will be denoted by $\M(\Op)$, and the subset of optional measures $\mu$ with $\mu(\Op)=1$ by $\M_1(\Op)$. 
We also introduce the spaces
\begin{equation*}
 \B^1=\lk a=(a_t)_{t\in[0,T]} \mk  a\, \text{adapted, right-continuous, of finite variation},\,\var(a)\in L^1(\P)\rk, 
\end{equation*}
and the space of random measures
\begin{equation*}
 \B^1_+:=\lk a\in\B^1 \mk a_{0-}:=0, a\: \text{non-decreasing}\rk.
\end{equation*}
Due to Dol{\'e}ans representation result, cf., e.g., \cite[Theorem VI 65]{dm2}, $\mu\in\M(\Op)$ if and only if there exists a  process $a\in\B^1_+$ such that
\begin{equation}\label{om}
 \E_{\mu}[X]=\E\left[\int_{[0, T]}X_sda_s\right] 
\end{equation}
for every bounded optional process $X$. So we can (and will) identify  the space $\M(\Op)$ with $\B^1_+$, and the space  $\M_1(\Op)$ with
\begin{equation}\label{eq:z1}
 \Z_{1}:=\lk a\in\B^1_+ \mk \E[a_T]=1\rk.
\end{equation}

Next we prove an auxiliary result on extension of local martingales; we apply here terminology and results from \cite[Chapter V]{Jacod}. For a given non-decreasing sequence of stopping times $(\tau_n)_{n\in\inte}$  such that $\tau:=\lim_n\tau_n$ is a predictable stopping time, we consider a stochastic interval of the form  $\cup_{n\in\inte}\llbracket 0,\tau_n\rrbracket$. The interval can be either open or closed at the right boundary $\tau$: Defining $B:=\cap_n\{\tau_n<\tau\}$, we have that $\cup_{n\in\inte}\llbracket 0,\tau_n\rrbracket=\llbracket 0,\tau\llbracket$ on $B$, and $\cup_{n\in\inte}\llbracket 0,\tau_n\rrbracket=\llbracket 0,\tau\rrbracket$ on $B^{\rm c}$. 
We call a process $L$ a local martingale (resp. a semimartingale, a supermartingale) on $\cup_{n\in\inte}\llbracket 0,\tau_n\rrbracket$, if for any stopping time $\sigma$ such that $\llbracket 0,\sigma\rrbracket\subseteq\cup_{n\in\inte}\llbracket 0,\tau_n\rrbracket$ the stopped process $L^\sigma$ is a local martingale (resp. a semimartingale, a supermartingale). 
The following lemma extends \cite[Proposition 1]{ctr12}, cf. also \cite[Lemma 6.10]{bkx12} to  non-continuous local martingales. 
\begin{lemma}\label{lemmma:locmart}
Let $(\tau_n)_{n\in\inte}$ be an non-decreasing sequence of stopping times, such that $\tau=\lim_n\tau_n$ is a predictable stopping time. Assume further that $L$ is a nonnegative local martingale on the stochastic interval $\cup_{n\in\inte}\llbracket 0,\tau_n\rrbracket$. Then there exists a  \cd local martingale $\tilde{L}=(\tilde{L}_t)_{t\in[0,T]}$, such that $\tilde{L}=\tilde{L}^\tau$, and $L=\tilde{L}$ on $\cup_{n\in\inte}\llbracket 0,\tau_n\rrbracket$.
\end{lemma}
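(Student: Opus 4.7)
The plan is to construct $\tilde L$ explicitly as the c\`adl\`ag closure of $L$ across the predictable time $\tau$, then verify the three required properties: agreement with $L$ on $\cup_n\llbracket 0,\tau_n\rrbracket$, the c\`adl\`ag property, and the local martingale property. The starting observation is that, for each $n$, $L^{\tau_n}$ is a nonnegative local martingale, hence a nonnegative supermartingale on $[0,T]$. In particular, $(L_{\tau_n})_{n\in\mathbb{N}}$ is a nonnegative discrete supermartingale with respect to $(\mathcal{F}_{\tau_n})_n$, so by the martingale convergence theorem it converges almost surely to some integrable variable $\ell$. Writing $B:=\cap_n\{\tau_n<\tau\}$, on $B^{\mathrm{c}}$ one has $\tau_n=\tau$ from some (random) index on, so $\ell=L_\tau$ there; on $B$, the sequence $\tau_n$ strictly approaches $\tau$ from below and $L$ is c\`adl\`ag on $[0,\tau)$, hence $\ell$ coincides with the left limit $L_{\tau^-}$.

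I would then set $\tilde L_t:=L_{t\wedge\tau_n}$ for any $n$ with $t\le\tau_n$ (the definition is consistent in $n$), and extend by $\tilde L_t:=\ell$ for $t\ge\tau$ on $B$ and $\tilde L_t:=L_\tau$ for $t\ge\tau$ on $B^{\mathrm{c}}$. The c\`adl\`ag property splits into three regions: on $\cup_n\llbracket 0,\tau_n\rrbracket$ nothing changes, so $\tilde L=L$ is c\`adl\`ag; beyond $\tau$ the process is constant, hence right-continuous; at $\tau$ itself, on $B$ the identity $\tilde L_\tau=L_{\tau^-}$ gives a continuous transition, while on $B^{\mathrm{c}}$, $\tilde L_\tau=L_\tau=L_{\tau_m}$ for the relevant $m$ inherits the c\`adl\`ag property from $L^{\tau_m}$.

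For the local martingale property I would exploit predictability of $\tau$: let $(\sigma_n)_n$ be an announcing sequence, so $\sigma_n\uparrow\tau$ with $\sigma_n<\tau$ on $\{\tau>0\}$. For each $\omega$ and $n$, since $\sigma_n(\omega)<\tau(\omega)=\lim_m\tau_m(\omega)$, some $m$ satisfies $\sigma_n(\omega)\le\tau_m(\omega)$, so $\llbracket 0,\sigma_n\rrbracket\subseteq\cup_m\llbracket 0,\tau_m\rrbracket$, and by hypothesis $L^{\sigma_n}$ is a local martingale on $[0,T]$. Taking a localizing sequence $\theta_{n,k}\uparrow T$ for $L^{\sigma_n}$ and diagonalizing, $\rho_n:=\sigma_n\wedge\theta_{n,n}$ yields $\tilde L^{\rho_n}=L^{\rho_n}$ a uniformly integrable martingale. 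Combined with the fact that $\tilde L=\tilde L^\tau$, so that stopping at any time $\ge\tau$ leaves $\tilde L$ unchanged, the sequence $\rho_n$ provides a valid localizing sequence for $\tilde L$ on $[0,T]$.

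The main obstacle I expect is the last localization step on the event $B$, where $\rho_n\uparrow\tau$ possibly with $\tau<T$: converting this into a localization reaching $T$ requires careful use of $\tilde L=\tilde L^\tau$ (which makes martingale behaviour on $[\tau,T]$ trivial) together with the continuity of $\tilde L$ at $\tau$ on $B$ established in the first step. Identifying $\ell$ with $L_{\tau^-}$ on $B$ is the other delicate point, since one needs a.s.\ convergence along the particular sequence $\tau_n$ and not merely existence of left limits.
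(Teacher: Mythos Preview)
Your construction of $\tilde L$ and the verification of the c\`adl\`ag property are essentially the same as the paper's. The gap is in the local--martingale step. Your sequence $\rho_n=\sigma_n\wedge\theta_{n,n}$ satisfies $\rho_n<\tau$ on $\{\tau>0\}$ (since the announcing sequence has $\sigma_n<\tau$), so $\rho_n\uparrow\tau$ but does not reach $T$ on the event $\{\tau<T\}$. This is therefore \emph{not} a localizing sequence for $\tilde L$ on $[0,T]$, and the identity $\tilde L=\tilde L^\tau$ does not repair it: any candidate $\rho'_n\uparrow T$ must eventually satisfy $\rho'_n\ge\tau$ on a set of positive probability, and there $\tilde L^{\rho'_n}=\tilde L$ is precisely the process whose martingale property is in question. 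In effect your argument shows only that $\tilde L$ is a local martingale on $\llbracket 0,\tau\llbracket$; closing the interval at $\tau$ is exactly where a nonnegative supermartingale may lose mass, and your proposal (as you yourself flag) does not supply the missing step. Note also that on $B^{\mathrm c}$ the process $\tilde L$ may jump at $\tau$, so ``continuity at $\tau$'' cannot carry the argument there.

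The paper handles this differently. It first observes (via \cite[Lemma 5.17, Proposition 5.8]{Jacod}) that the extension $\tilde L$ is a nonnegative \emph{supermartingale} on $[0,T]$, and then applies the Doob--Meyer decomposition $\tilde L=\tilde M-\tilde a$. Since $\tilde L$ is a local martingale on $\llbracket 0,\tau\llbracket$ and constant on $\rrbracket\tau,T\rrbracket$, uniqueness of the decomposition forces $\tilde a=\Delta\tilde a_\tau\,\ind_{\llbracket\tau,T\rrbracket}$. Predictability of $\tau$ now gives $\E[\Delta\tilde M_\tau]=0$; and using Jacod's \cite[Theorem 5.3]{Jacod} to obtain uniformly integrable martingales $L^{\sigma_n}$ with $\cup_n\llbracket 0,\sigma_n\rrbracket=\cup_n\llbracket 0,\tau_n\rrbracket$, one has $\Delta\tilde L_\tau=\lim_n\Delta L^{\sigma_n}_\tau$ with the domination $|\Delta L^{\sigma_n}_\tau|\le|\Delta\tilde L_\tau|\in L^1(\P)$, so $\E[\Delta\tilde L_\tau]=\lim_n\E[\Delta L^{\sigma_n}_\tau]=0$ again by predictability of $\tau$. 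Hence $\E[\Delta\tilde a_\tau]=0$, so $\tilde a\equiv 0$ and $\tilde L=\tilde M$ is a local martingale. The predictability of $\tau$ enters essentially through these two vanishing--jump expectations, and that is the ingredient your direct localization attempt does not exploit.
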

\begin{proof}
We define the extension of $L$ as 
\begin{equation}\label{L:extension}\tilde L_t:= 
\begin{cases}
L_t & \text{on $\{t<\tau\}$},\\
L_\tau & \text{on $\{\tau\le t\le T\}\cap B^{\rm c}$},\\
{\lim_{s\uparrow\tau, s\in\llbracket 0,\tau\llbracket\cap\qu}L_s}& \text{on $\{\tau\le t\le T\}\cap B$},
\end{cases}
\end{equation}
where $B=\cap_n\{\tau_n<\tau\}$. Since $L$ is a nonnegative supermartingale on  $\cup_{n\in\inte}\llbracket 0,\tau_n\rrbracket$, the left limit  $L_{\tau-}$ 
exists $\P$-a.s.. In particular, 
the process $\tilde L$ is well defined, $L=\tilde{L}$ on $\cup_{n\in\inte}\llbracket 0,\tau_n\rrbracket$, and $\tilde L$ is a supermartingale on $[0,T]$ by \cite[Lemma 5.17, Proposition 5.8]{Jacod}. In fact,  $\tilde L$ is a local martingale. To see this, we use the Doob-Meyer decomposition of the supermartingale $\tilde L=\tilde M-\tilde a$, where $\tilde M$ is a local martingale, and $\tilde a$ a predictable non-decreasing process. Since $\tilde L$ is a local martingale on $\llbracket 0,\tau\llbracket$ and constant on $\rrbracket \tau,T\rrbracket$, uniqueness of the Doob-Meyer decomposition implies
\[\tilde a_t= 
\begin{cases}
 0 & \text{on $t<\tau$},\\
\Delta\tilde a_\tau  & \text{on $\tau\le t\le T$}.
\end{cases}
\]
We will show that 
\begin{equation}\label{deltal}
 \E\lp\Delta\tilde a_\tau\rp=\E\lp\Delta\tilde M_\tau\rp-\E\lp\Delta\tilde L_\tau\rp=0,
\end{equation}
which implies $\tilde a\equiv 0$, and proves that $\tilde L=\tilde M$ is a local martingale. In order to see \eqref{deltal}, note that $\E[\Delta\tilde M_\tau]=0$, since $\tau$ is predictable, and $\tilde M$ a local martingale.  Moreover, by \cite[Theorem 5.3]{Jacod}, cf. also \cite[Lemma 1]{ctr12}, there exists a non-decreasing sequence of stopping times $(\sigma_n)_{n\in\inte}$, such that $\cup_{n\in\inte}\llbracket 0,\tau_n\rrbracket=\cup_{n\in\inte}\llbracket 0,\sigma_n\rrbracket$, and $L^{\sigma_n}$ is a uniformly integrable martingale for each $n$.  We have $\Delta \tilde L_\tau=\lim_n\Delta L_\tau^{\sigma_n}$, since $\Delta \tilde L_\tau=0$ on $B$, 
and $\Delta \tilde L_\tau=\Delta L_\tau$ on  $B^{\rm c}$. 
In addition,  $|\Delta L_\tau^{\sigma_n}|\le|\Delta \tilde L_\tau|\in L^1(\P)$  for all $n\in\inte$, since $\tilde L$ is a nonnegative supermartingale. Hence, dominated convergence implies
\begin{equation*}
 \E[\Delta\tilde L_\tau]=\lim_n\E\lp\Delta L_\tau^{\sigma_n}\rp =0,
\end{equation*}
where we have used that $\tau$ is predictable and $L^{\sigma_n}$ is a martingale for the second equality. This concludes the proof.
\end{proof}
\noindent
We are now ready to state the main result of this section.
\begin{theorem}\label{decomposition}
A process $a:=(a_t)_{t\in [0,T]}$ is an non-decreasing, right-continuous, adapted process with $a_{0-}=0$ and  $\E[a_T]=1$, if and only if there exists a pair of adapted \cd processes $(L,D):=(L_t,D_t)_{t\in [0,T]}$, such that
\begin{itemize}
\item[1)] $L$ is a non-negative  local martingale with $L_0=1$ and $L_{T-}=\E\lp L_T|\F_{T-}\rp$; 
\item[2)] $D$ is a non-increasing process 
with $D_{0-}=1$ and $\{D_T>0\}\subseteq\{L_T=0\}$; 
\item[3)] The non-negative supermartingale $(L_tD_t)_{t\in[0,T]}$ 
is of class (D);
\item[4)] $\displaystyle{a_t=-\int_{[0,t]} L_s dD_s} \quad \forall t \in [0,T]$, with the convention $a_0=-L_0\Delta D_0=1-D_0$.
 \end{itemize}
The processes  $L$ and $D$ are unique up to undistinguishability on $\llbracket0,\tau\llbracket$, where 
\[
 \tau:=\inf\lk t\in[0,T]\mk a_t=a_T\rk.
\]
Moreover, the pair $(L,D)$ can be chosen such that in addition 
\begin{itemize}
\item[5)] $L_t=L_0+ \int_0^t \textbf{1}_{\{D_{s-}>0\}}  dL_s$, \quad $ D_t=1+\int_0^t \textbf{1}_{\{L_s>0\}} \,  dD_s \quad \forall\, t\in [0,T]$
\end{itemize}
holds. Under this condition  $L$ and $D$ are essentially unique 
on $[0,T]$.  
\end{theorem}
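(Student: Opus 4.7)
The plan is to prove the two implications separately: for sufficiency I would carry out a short integration-by-parts computation, and for necessity I would construct $(L,D)$ via a multiplicative decomposition of a specific supermartingale built from $a$. Uniqueness would then follow from uniqueness of that decomposition.

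\textbf{Sufficiency.} Starting from $(L,D)$ satisfying 1)-4), the pathwise properties of $a$ follow immediately from $L\ge 0$ and $-dD\ge 0$. Applying integration by parts to $L_tD_t$ and combining with $a_t=-\int_{[0,t]}L_s\,dD_s$ should give
\[
a_t+L_tD_t-1=\int_{(0,t]}D_{s-}\,dL_s=:M_t,\qquad t\in[0,T],
\]
so $M$ is a local martingale with $M_0=0$. Since $a\ge 0$ and $LD\ge 0$ is of class (D), $M$ is bounded below, hence a supermartingale, giving $\E[a_t]\le 1$ and in particular $a_T\in L^1$. Then both $a$ and $M$ are of class (D), so $M$ is a uniformly integrable martingale, and combining with $L_TD_T=0$ from condition 2) one obtains $\E[a_T]=1$.

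\textbf{Necessity.} Given $a\in\Z_1$, I would introduce the Doob martingale $N_t:=\E[a_T\,|\,\F_t]$—a non-negative \cd uniformly integrable martingale with $N_0=1$, $N_T=a_T$, and $N_{T-}=\E[N_T\,|\,\F_{T-}]$ automatic—together with the non-negative \cd supermartingale $S_t:=N_t-a_t$ of class (D), which satisfies $S_T=0$. The candidate $(L,D)$ would be a multiplicative decomposition $S=LD$. On the stochastic interval $\llbracket 0,\tau\llbracket$ where $S_->0$ (with $\tau$ as in the statement), I would solve the coupled SDE system
\[
dL_t=D_{t-}^{-1}\,dN_t,\qquad dD_t=-L_t^{-1}\,da_t,\qquad L_0=1,\ D_{0-}=1,
\]
whose coefficients remain finite on $\llbracket 0,\tau\llbracket$ because $L>0$ and $D_->0$ there. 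Matching the local martingale and finite-variation parts of the integration-by-parts expansion of $LD$ against $S=N-a$ would confirm both $S=LD$ and $a=-\int L\,dD$; that $D$ is non-increasing is immediate. Beyond $\tau$, where $a$ is constant, I would apply Lemma~\ref{lemmma:locmart} to the reducing sequence $\sigma_n:=\inf\{t:S_t\le 1/n\}$ (so $\tau=\lim_n\sigma_n$) to extend $L$ to a \cd local martingale stopped at $\tau$ on $[0,T]$ that still satisfies $L_{T-}=\E[L_T\,|\,\F_{T-}]$, and declare $D$ constant on $\llbracket\tau,T\rrbracket$. All conditions 1)-5) are then to be verified; in particular $L_TD_T=S_T=0$ forces $\{D_T>0\}\subseteq\{L_T=0\}$.

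\textbf{Uniqueness.} For two admissible pairs $(L,D)$ and $(L',D')$ the sufficiency computation forces $LD=L'D'=N-a=S$, and uniqueness of the multiplicative decomposition of $S$ on $\{S_->0\}=\llbracket 0,\tau\llbracket$ would then yield $L=L'$ and $D=D'$ there up to indistinguishability; condition 5) pins down both processes on $\llbracket\tau,T\rrbracket$ (freezing $L$ where $D_-=0$ and $D$ where $L=0$), giving essential uniqueness on $[0,T]$. The hardest step would be executing the multiplicative decomposition on $\llbracket 0,\tau\llbracket$ in a form compatible with the optionality rather than predictability of $D$, and extending it across $\tau$ while preserving both the local martingale property of $L$ and the boundary condition $L_{T-}=\E[L_T\,|\,\F_{T-}]$—this is exactly what Lemma~\ref{lemmma:locmart} is designed for.
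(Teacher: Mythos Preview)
Your approach is essentially the paper's: both hinge on the multiplicative decomposition $S=LD$ of the supermartingale $S_t=\E[a_T\mid\F_t]-a_t$ (the paper writes $U=M-a$), and both use Lemma~\ref{lemmma:locmart} to extend $L$ across the zero of $S$. Your sufficiency argument is actually tidier than the paper's Step~5: once you observe that $M_t:=a_t+L_tD_t-1=\int_0^t D_{s-}\,dL_s$ is a local martingale bounded below by $-1$, the class~(D) assumption on $LD$ together with $a_T\in L^1$ makes $M$ of class~(D), hence a uniformly integrable martingale, and $\E[a_T]=1$ follows from $L_TD_T=0$. The paper instead localizes and passes to the limit through $T-$, which is where it needs the condition $L_{T-}=\E[L_T\mid\F_{T-}]$ explicitly.

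Two points need more care. First, Lemma~\ref{lemmma:locmart} requires the limit stopping time to be \emph{predictable}, but your $\tau=\lim_n\sigma_n$ need not be: on $\{S_{\tau-}>0\}$ the supermartingale jumps to zero and $\sigma_n=\tau$ eventually, so $\tau$ is not announced there. The paper resolves this by working with $\tau_A$, the restriction of $\tau$ to $A=\{S_{\tau-}=0\}$, which \emph{is} predictable, and noting that on $A^c$ the interval $\cup_n\llbracket 0,\sigma_n\rrbracket$ is already closed at $\tau$ so no extension is needed. Second, ``uniqueness of the multiplicative decomposition'' is not an off-the-shelf citation here because $D$ is merely optional, not predictable; the paper proves it directly by showing that any admissible $\tilde D$ solves the same SDE $d\tilde D_t=-\tilde D_{t-}(U_t+\Delta a_t)^{-1}\,da_t$ on $\llbracket 0,\tau\llbracket$ (your equation $dD=-L^{-1}da$ rewritten via $U_t+\Delta a_t=L_tD_{t-}$), and then treats the boundary $\tau$ by cases. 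Your coupled system $dL=D_-^{-1}dN$, $dD=-L^{-1}da$ is exactly the paper's construction once decoupled through $S=LD$, but you should make that decoupling explicit rather than appealing to a solution of the coupled system.
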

Dolean's representation result, cf., e.g., \cite[Theorem VI 65]{dm2}, implies immediately the following corollary. 
\begin{corollary}\label{cor:dec}
We have $\mu\in\M(\Op)$ if and only if there exists a pair of processes $(L,D)$, satisfying properties 1)-3) of Theorem~\ref{decomposition}, where in 1) $L_0=\mu(\Op)$, such that 
 \begin{equation}\label{eq:md}
\E_{\mu}[X]=\E\lp-\int_{[0,T]}X_sL_sdD_s\rp 
\end{equation}
for every bounded optional process $X$. 
\end{corollary}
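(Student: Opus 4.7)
The plan is to deduce Corollary~\ref{cor:dec} directly from Theorem~\ref{decomposition} combined with Dol\'eans's representation \eqref{om}. The latter gives that $\mu\in\M(\Op)$ if and only if there exists $a\in\B^1_+$ with $\E_\mu[X]=\E\left[\int_{[0,T]}X_s\,da_s\right]$ for every bounded optional $X$, in which case $\mu(\Op)=\E[a_T]$. So the claim amounts to showing that every $a\in\B^1_+$ admits a representation $a_t=-\int_{[0,t]}L_s\,dD_s$ with $(L,D)$ satisfying properties 1)-3) of Theorem~\ref{decomposition} and $L_0=\E[a_T]$, and conversely that every such pair gives rise to an element of $\B^1_+$ via this formula.

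For the only if direction, set $c:=\E[a_T]=\mu(\Op)$. If $c=0$ then $a\equiv 0$, and one can take $L\equiv 0$ and $D\equiv 1$, so that \eqref{eq:md} reduces to $0=0$. If $c>0$, the normalised process $a/c$ lies in $\Z_1$ as defined in \eqref{eq:z1}, and Theorem~\ref{decomposition} produces a pair $(\tilde L,D)$ satisfying properties 1)-3) with $\tilde L_0=1$ and $a_t/c=-\int_{[0,t]}\tilde L_s\,dD_s$. Setting $L:=c\tilde L$, properties 1)-3) are preserved, since multiplication by the positive constant $c$ respects non-negativity, the local martingale property and the linear condition $L_{T-}=\E[L_T\mid\F_{T-}]$, and $LD=c\tilde LD$ remains a non-negative supermartingale of class (D). One then has $L_0=c=\mu(\Op)$ and $a_t=-\int_{[0,t]}L_s\,dD_s$, and substituting into \eqref{om} gives \eqref{eq:md}.

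For the if direction, given $(L,D)$ satisfying 1)-3) with $L_0=\mu(\Op)$, define $a_t:=-\int_{[0,t]}L_s\,dD_s$. Adaptedness, right-continuity, the non-decreasing property (since $L\ge 0$ and $D$ is non-increasing), and $a_{0-}=0$ are immediate. For integrability, integration by parts identifies $LD+a$ as a non-negative local martingale, hence a supermartingale, so that $\E[a_T]\le\E[L_TD_T+a_T]\le L_0<\infty$. Thus $a\in\B^1_+$, and \eqref{om} combined with the definition of $a$ coincides with \eqref{eq:md}, whence $\mu\in\M(\Op)$. Beyond Theorem~\ref{decomposition} itself the only nontrivial piece is this brief integration-by-parts-plus-supermartingale argument; everything else is routine bookkeeping on top of the theorem and of the Dol\'eans correspondence.
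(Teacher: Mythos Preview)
Your proof is correct and follows the same route as the paper, which simply notes that the corollary is immediate from Dol\'eans's representation combined with Theorem~\ref{decomposition}. The integration-by-parts argument you give for integrability in the ``if'' direction is not actually needed: the ``if'' part of Theorem~\ref{decomposition} (Step~5 of its proof), after scaling by $L_0$, already yields $\E[a_T]=L_0<\infty$ and hence $a\in\B^1_+$ directly.
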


Before giving the proof of Theorem~\ref{decomposition}, let us note that a discrete time version of it appeared in \cite[Theorem 3.4]{afp9}, and a continuous time version was proved in \cite[Theorem 2.1]{kard10}. Here we complement the result of \cite{kard10} by providing \emph{necessary and sufficient} conditions for a couple $(L,D)$ to define an optional measure. In particular, sufficiency requires  property 3), that did not appear in \cite[Theorem 2.1]{kard10}. We also provide a more precise statement on the uniqueness of the couple $(L,D)$, and, in difference to \cite{kard10}, a direct proof of it. It involves only conditions 1), 2), and 4)  of Theorem~\ref{decomposition}, and hence applies also to \cite[Theorem 2.1]{kard10}. 

\begin{remark}
In Theorem~\ref{decomposition} we choose the process $D$ to be non-increasing, i.e., the measure $-dD$ to be positive, since in our framework $D$ is interpreted as a discounting process. One can always switch to the non-decreasing process $K:=1-D$ as in \cite[Theorem 2.1]{kard10}, in order to have a positive measure in the representation \eqref{eq:md}. 
\end{remark}
\noindent
{\it Proof of Theorem~\ref{decomposition}}.
The proof will be obtained in several steps. We begin with the ``only if'' part.\\
\textbf{Step 1}\\
We consider the non-negative supermartingale $U$ defined by
\begin{equation}\label{u}
U_t:=\E[a_T \vert \mathcal{F}_t]-a_t=:M_t-a_t,\qquad t\in[0,T],                                                                                                                                 
\end{equation}
The process $U$ is of class (D), and it is a potential  
if and only if $\Delta a_T=0$. We define the stopping times
\begin{equation}\label{taun} 
\tau_n:=\inf\lk t\in [0,T]\mk U_{t}\le\frac{1}{n}\rk,\qquad n\in\inte, 
\end{equation}
and 
\begin{align}\label{tau}
\tau:=\lim_{n\to\infty}\tau_n&=\inf\lk t\in [0,T]\mk U_{t-}=0 \textrm{ or } U_t=0\rk\\\nonumber&=\inf\lk t\in[0,T]\mk a_t=a_T\rk. 
\end{align}
We have  $\tau\le T$ $\P$-a.s., and $U$ vanishes on $\llbracket\tau,T\rrbracket$ by \cite[Theorem VI.17]{dm2}.\\ 
To determine the process $D$, we set $D_{0-}:=1$, and define $(D_t)_{t\in [0,T]}$ as the unique solution of the SDE
\begin{equation}
\label{eq:DSDE}
D_t=1-\int_0^{t} \frac{D_{s-}}{U_s+\Delta a_s} 
da_s, \qquad t\in [0,T], 
\end{equation}
i.e.,
\begin{equation}\label{eq:D}
D_t:=\exp\left(-\int_0^{t} \frac{1}{U_s} d a_s^c \right) \prod_{0\le s\leq t, \, \Delta a_s>0} \frac{U_s}{U_s+\Delta a_s}, \qquad t\in [0,T].
\end{equation}
Note that $D$ is well-defined, right-continuous, and non-increasing on $[0,T]$, with $D_0=1-a_0$, $D=D^\tau$,   $\{D_{\tau-}=0\}\subseteq\{U_{\tau-}=0\}$, and $\{D_\tau=0\}\subseteq\{\Delta a_\tau>0\}\cup\{D_{\tau-}=0\}$ $\P$-a.s.. \\  
The process $L$ should be intuitively defined as the stochastic exponential of $\int_0^{\cdot}\textbf{1}_{\{U_{s-}>0\}} \frac{1}{U_{s-}} d M_s$. In order to make this definition rigorous, let $A:= \{U_{\tau-}=0\}$, and denote by $\tau_{A}$ the restriction of  $\tau$ to $A$, i.e., 
\[\tau_A:= 
\begin{cases}
 \tau & \text{on $A$},\\
T  & \text{otherwise}.
\end{cases}
\]
Note that $\tau_A$ is a predictable stopping time, since $\tau_A=\lim_n\tilde\tau_n$, where
\[ 
\tilde \tau_n:= 
\begin{cases}
 \tau_n & \text{on $\{\tau_n<\tau\}$},\\
T & \text{on $\{\tau_n=\tau\}$}.
\end{cases}
\]  
Since $M=M^\tau$, and $\frac{1}{U_{s-}}$ is bounded on $\llbracket 0,\tilde \tau_n\rrbracket\cap\llbracket 0,\tau\rrbracket$, the stochastic integral $\int_0^{\cdot} \frac{1}{U_{s-}} d M_s$ is well defined on each $\llbracket 0,\tilde\tau_n\rrbracket$, and hence on $\cup_{n\in\inte}\llbracket 0,\tilde\tau_n\rrbracket$. Thus we can define the process $L$ as the stochastic exponential of the local martingale $\int_0^{\cdot}\frac{1}{U_{s-}} d M_s$ on $\cup_{n\in\inte}\llbracket 0,\tilde\tau_n\rrbracket$, i.e. 
\begin{equation}\label{eq:Lloc1}
L_t:=  \exp \left(\int_0^{t}  \frac{1}{U_{s-}} d M_s^c  -  \frac12 \int_0^{t}  \left|\frac{1}{U_{s-}}\right\vert^2 d \langle M \rangle_s \right)\times \prod_{\substack{0<s\leq t,\\ \Delta M_s\ne0}} \left(1+\frac{\Delta M_s}{U_{s-}}\right)
\end{equation} 
for $(\omega,t)\in\cup_{n\in\inte}\llbracket 0,\tilde\tau_n\rrbracket$. Then $L$ solves
\begin{equation}\label{eq:Lloc}
L_t = 1 +\int_0^{t} \frac{L_{s-}}{U_{s-}} d M_s,
\end{equation}
and is a non-negative local martingale on $\cup_{n\in\inte}\llbracket 0,\tilde\tau_n\rrbracket$. By Lemma~\ref{lemmma:locmart}, $L$ can be extended to a local martingale on $[0,T]$, which we also denote by $L$. It follows from \eqref{eq:Lloc} and from \eqref{L:extension},
that $L$  solves  the SDE
\begin{equation}\label{eq:L}
L_t = 1 +\int_0^{t} \textbf{1}_{\{U_{s-}>0\}} \frac{L_{s-}}{U_{s-}} d M_s, \qquad  t \in [0,T], 
\end{equation}
and can be written as 
\begin{align}\label{eq:Lexp}
\nonumber L_t:=   \exp \left(\int_0^{t\wedge\tau} \right.& \left. \frac{1}{U_{s-}} d M_s^c  - \frac12 \int_0^{t\wedge\tau}  \left|\frac{1}{U_{s-}}\right\vert^2 d \langle M \rangle_s \right)\\ &\times \prod_{\substack{0<s\leq t\wedge\tau,\\ \Delta M_s\ne0}} \left(1+\frac{\Delta M_s}{U_{s-}}\right),\qquad \qquad t\in[0,T].
\end{align} 
We slightly deviate here from the usual definition of a stochastic exponential by allowing the continuous part of $L$ to become zero. Indeed, the set 
\[
\left\{L_{\tau}=0\right\}=\left\{\lim_{v\uparrow\tau_A}\int_0^{v\wedge\tau_A}  \left|\frac{1}{U_{s-}}\right\vert^2 d \langle M \rangle_s =\infty\right\}\subseteq A
\]
might have positive probability, cf. \cite[Example 2.5]{kard10}. We use in \eqref{eq:Lexp} the convention $L_t(\omega):=0$ for $\omega\in\{L_\tau=0\}\cap\{\tau\le t\}$. Note that the jump part of $L$  is well defined at $\tau$, since $\Delta M_\tau=0$ on $\{U_{\tau-}=0\}$. \\ 
It follows either from \eqref{eq:L} or from \eqref{eq:Lexp}, that
\begin{equation*}\label{ceL}
 \E\lp L_T\mk\F_{T-}\rp=\E\lp L_{T-}+\textbf{1}_{\{U_{T-}>0\}} L_{T-}\frac{\Delta M_T}{U_{T-}}\mk\F_{T-}\rp=L_{T-},
\end{equation*}
since $\E\lp \Delta M_T\mk\F_{T-}\rp=0$ both for $T<\infty$ and $T=\infty$ due to the fact that $M$ is a uniformly integrable martingale. 

\noindent
\textbf{Step 2}\\
We show that $D$ and $L$ provide a multiplicative decomposition of $U$, i.e.,
\begin{equation}\label{eq:da}
U_t=L_t D_t \qquad \forall \,t\in  [0,T].
\end{equation}
First we prove this equality on $\cup_{n\in\inte}\llbracket 0,\tilde\tau_n\rrbracket$. To this end, we note that by the same argumentation as in Step 1,  the stochastic integral $\int_0^{\cdot} 
\frac{1}{U_{s-}} dU_s$ is well defined on $\cup_{n\in\inte}\llbracket 0,\tilde\tau_n\rrbracket$. 
Thus 
$U$ can be written as the stochastic exponential of $\int_0^{\cdot} \frac{1}{U_{s-}} dU_s$, i.e., 
\begin{equation}\label{eq:tempdec}
U_t =U_0\exp\left(\int_0^{t} \frac{1}{U_{s-}} dU_s^c -\frac12 \int_0^{t}  \left\vert\frac{1}{U_{s-}}\right\vert^2 d\langle M \rangle_s \right) \times \prod_{\substack{0<s\leq t,\\ \Delta U_s\ne0}} \left( \frac{U_s}{U_{s-}} \right)
\end{equation}
on $\cup_{n\in\inte}\llbracket 0,\tilde\tau_n\rrbracket$.
Plugging \eqref{eq:D} and \eqref{eq:Lloc1} into \eqref{eq:da}, and noting that 
\begin{equation*}
\prod_{\substack{0<s\leq t,\\ \Delta U_s\ne0 }}\frac{U_s}{U_{s-}}
=\prod_{\substack{0<s\leq t,\\ \Delta M_s\ne0 }}\left(1+\frac{\Delta M_s}{U_{s-}}\right) \times \prod_{\substack{0<s\leq t,\\ \Delta a_s\ne0}} \frac{U_s}{U_{s}+\Delta a_s}, 
\end{equation*}
we obtain \eqref{eq:tempdec}.
It remains to prove \eqref{eq:da} for  $(\omega, t)\in\llbracket\tau_A, T\rrbracket$ and $\omega\in\{U_{\tau_A-}=0\}$.  Thanks to the existence of the left limits, we obtain 
\[
0=U_{\tau_A-}=L_{\tau_A-}D_{\tau_A-}=L_{\tau_A}D_{\tau_A},
\]
where we have used that $\Delta D_{\tau_A}=\Delta L_{\tau_A}=0$ on $\{U_{\tau_A-}=0\}=\{U_{\tau-}=0\}$ by definitions of $D$ and $L$. Hence, 
\[
U_{t}=U_{\tau_A}=0=L_{\tau_A}D_{\tau_A}=L_{t}D_{t}\quad\text{for $(\omega, t)\in\llbracket\tau_A, T\rrbracket$ and $\omega\in\{U_{\tau_A-}=0\}$.} 
\]
This concludes the proof of \eqref{eq:da}. Note that \eqref{eq:da} implies in particular property 3) of the theorem, and $\{D_T>0\}\subseteq\{L_T=0\}$, since $U_T=0$.\\
\textbf{Step 3:}\\
We now prove properties 4) and 5) of the theorem. First note that 4) holds at $0$  by definitions of $D$ and $L$. 
Hence it remains to prove 
\begin{equation}\label{3a}
 a_t-a_0=-\int_0^t L_s dD_s\qquad \forall\,t\in[0,T].
\end{equation}
The multiplicative decomposition \eqref{eq:da}, integration by parts, 
and the SDE \eqref{eq:L} yield for each $t\in[0,T]$
\begin{align*}\label{3}
\nonumber U_t= L_{t} D_{t} &= L_0D_0+\int_0^{t} L_s dD_s + \int_0^{t} D_{s-} dL_s\\
\nonumber &=U_0+\int_0^{t} L_s dD_s +\int_0^{t} \textbf{1}_{\{U_{s-}>0\}} dM_s
\end{align*} 
Since $\{M_{s}>0\}\subseteq\{U_{s-}>0\}$, we have
\[
M_t-M_0=\int_0^{t} \textbf{1}_{\{M_{s}>0\}} dM_s=\int_0^{t} \textbf{1}_{\{U_{s-}>0\}} dM_s,
\]
and thus 
\begin{equation*}
\int_0^{t} L_s dD_s=U_t-U_0-(M_t-M_0)=-(a_t-a_0).
\end{equation*} 
Concerning property 5), note that by definition of $D$ we have ${\{U_{t-}>0\}}\subseteq {\{D_{t-}>0\}}$ for all $t\in [0,T]$.
Thus \eqref{eq:L} implies for each $t\in[0,T]$
\begin{equation*}
L_t = L_0 +\int_0^{t } \textbf{1}_{\{U_{s-}>0\}} \frac{L_{s-}}{U_{s-}} d M_s
= L_0 +\int_0^t \textbf{1}_{\{D_{s->0}\}} dL_s.
\end{equation*}
Similarly, by definition of $L$ 
we have ${\{U_t + \Delta a_t >0\}}\subseteq {\{L_t>0\}}$ for all  $t\in [0,T]$, and hence \eqref{eq:DSDE} yields for each $t\in[0,T]$
\begin{equation*}
D_t =  1-\int_0^t \textbf{1}_{\{U_s+\Delta a_s>0\}} \frac{D_{s-}}{U_s+\Delta a_s} da_s
= 1+\int_0^t \textbf{1}_{\{L_s>0\}} dD_s.
\end{equation*}
\\\\
\textbf{Step 4:}\\ 
In order to prove uniqueness, we first show that every pair of processes $(\tilde{L},\tilde{D})$ satisfying properties 1), 2), and 4) of Theorem~\ref{decomposition} provides a multiplicative decomposition of the supermartingale $U$ defined in \eqref{u}, that is,
\begin{equation}\label{eq:U=tLtD}
U_t = \tilde{L}_t \tilde{D}_t \qquad \forall\, t \in [0,T].  
\end{equation}
This holds clearly at $T$, since $U_T=0$ and $\{\tilde D_T>0\}\subseteq\{\tilde L_T=0\}$. In order to prove \eqref{eq:U=tLtD} on $[0,T)$, let $(\sigma_n)_{n\geq 1}$ be a localizing sequence for $\tL$, i.e. $\sigma_n\nearrow T$ $\P$-a.s., and $L^{\sigma_n}$ is a uniformly integrable martingale for each $n$.
Let further $\sigma$ be any stopping time.  Then property 4) yields
\begin{eqnarray*}
1 &=& \E\left[a_T\right] = \E[a_T - a_{\sigma\wedge \sigma_n} + a_{\sigma\wedge \sigma_n} - a_{0-}]\\
&=& \E[a_T - a_{\sigma\wedge \sigma_n}] - \E\left[ \int_0^{\sigma\wedge \sigma_n} \tL_s d\tD_s\rp-\E\lp L_0\Delta D_0\right]\\
&=&\E[U_{\sigma\wedge \sigma_n}] - \E\left[ \int_0^{\sigma\wedge \sigma_n} \tL_{\sigma\wedge \sigma_n} d\tD_s \right]-\E\lp L_0\Delta D_0\right]\\
&=& \E[U_{\sigma\wedge \sigma_n}] - \E\left[ \tL_{\sigma\wedge \sigma_n} \tD_{\sigma\wedge \sigma_n}\right] + 1,
\end{eqnarray*}
where we have used uniform integrability of the martingale $L^{\sigma_n}$, and \cite[VI.57]{dm2}. 
Hence, by \cite[IV.87 b)]{dm1} the processes $U^{\sigma_n}$ and $\tL^{\sigma_n} \tD^{\sigma_n}$ are indistinguishable for each $n$. Since $\sigma_n\to T$ $\P$-a.s., \eqref{eq:U=tLtD} holds  on $[0,T)$.\\
In particular, since $\Delta a_t=-\tilde L_t\Delta \tilde D_t$ by 4), \eqref{eq:U=tLtD} yields $U_t+\Delta a_t=\tilde{L}_t \tilde{D}_{t-}$ for all $t$. 
This implies on $\llbracket 0, \tau\llbracket$:
\begin{equation*}
\tilde{D}_t - \tilde{D}_0 = \int_0^t d\tilde{D}_s
= \int_0^t \frac{\tilde{L}_s \tilde{D}_{s-}}{\tilde{L}_s \tilde{D}_{s-}} d\tilde{D}_s
= -\int_0^t \frac{\tilde{D}_{s-}}{U_s+\Delta a_s} da_s.
\end{equation*} 
So $\tilde{D}$ is a solution to the SDE \eqref{eq:DSDE} on $\llbracket 0, \tau\llbracket$, and thus coincides with $D$ on this set.  
Since $U=LD=\tL\tD$, this implies further $L=\tilde L$ on $\llbracket0,\tau\llbracket$, and $L_{\tau-}=\tilde L_{\tau-}$, $D_{\tau-}=\tilde D_{\tau-}$. Moreover, since $L_\tau D_\tau=\tL_\tau\tD_\tau=U_\tau=0$, property 4) yields
\[
L_\tau D_{\tau-} =-L_\tau\Delta D_\tau=-\Delta a_\tau=-\tilde L_\tau\Delta \tilde D_\tau=\tilde L_\tau \tilde D_{\tau-}.
\]
Thus $\tilde L_\tau= L_\tau>0$ on $\{\Delta a_\tau>0\}$, and hence  $\tilde D_\tau= D_\tau=0$ on $\{\Delta a_\tau>0\}$ by \eqref{eq:da} and \eqref{eq:U=tLtD}, which implies already $D=\tilde D$ on $\{\Delta a_\tau>0\}$. On $\{\Delta a_\tau=0\}$ we have $\tilde D_\tau=\tilde D_{\tau-}=D_{\tau-}= D_\tau$, and $\tilde L_{\tau-}= L_{\tau-}=0$ on $\{\Delta a_\tau=0\}\cap\{D_{\tau-}>0\}$, since $U_{\tau-}=L_{\tau-}D_{\tau-}=0$ on $\{\Delta a_\tau=0\}$. Non-negativity and local martingale property imply then $L=\tilde L$ on $\{\Delta a_\tau=0\}\cap\{D_{\tau-}>0\}$. \\
If we assume in addition, that $(\tilde L, \tilde D)$ satisfies property 5) of the theorem, we obtain also $\tilde L=\tilde L^\tau=L^\tau=L$ on $\{\Delta a_\tau>0\}\cup\{D_{\tau-}=0\}$, and $\tilde D=\tilde D^\tau=D^\tau=D$ on $\{\Delta a_\tau=0\}\cap\{D_{\tau-}>0\}$, which proves equality (in the sense of undistinguishability) on $[0,T]$. 
\\
\textbf{Step 5:}\\ 
We now proof the ``if'' part of the theorem. Obviously, any two processes $L$ and $D$ satisfying properties 1) and 2) define  a non-decreasing, right-continuous, adapted process $a$ via 4). It remains to prove that $\E\lp a_T\rp=1$.  To this end let $(\sigma_n)_{n\in\inte}$ be a localizing sequence for $L$. Note that w.l.o.g.\ we can assume that $\sigma_n<T$ for all $n$, otherwise we switch to $\sigma_n\wedge(T-\frac{1}{n})$ in case $T<\infty$.
Using 1), 2), 4), uniformly integrability of the martingale $L^{\sigma_n}$, and \cite[VI.57]{dm2}
we obtain for each $n\in\inte$:
\begin{align}\label{uniq}
\nonumber \E[a_{\sigma_n}]&=\E\lp- \int_{[0,\sigma_n]}L_{t\wedge\sigma_n}dD_t\rp=\E\lp- \int_{[0,\sigma_n]}L_{\sigma_n}dD_t\rp\\
&=\E\lp -L_{\sigma_n}D_{\sigma_n}+L_0D_{0-}\rp=\E\lp- L_{\sigma_n}D_{\sigma_n}\rp+1
\end{align}
By monotone convergence, $\E[a_{\sigma_n}]\to\E[a_{T-}]$ with $n\to\infty$, and $\E\lp L_{\sigma_n}D_{\sigma_n}\rp\to\E\lp L_{T-}D_{T-}\rp$, since $LD$ is of class (D). Moreover, 
\[
\E\lp L_{T-}D_{T-}\rp=\E\lp L_{T}D_{T-}\rp=\E\lp- L_{T}\Delta D_{T}\rp=\E\lp \Delta a_T\rp,
\]
where we have used  $L_{T-}=\E\lp L_T|\F_{T-}\rp$, $D_T=0$ on $\{L_T>0\}$ and 4). Hence \eqref{uniq} implies
\[
\E\lp a_T\rp=\E\lp a_{T-}\rp+\E\lp \Delta a_T\rp=1. 
\]
\hfill$\square$\\
\begin{remarks}\label{rem:inter}
\begin{enumerate}
\item Our proof of Theorem~\ref{decomposition} is based on the idea that any pair of processes $(L,D)$ satisfying conditions 1)-3)  provides a multiplicative decomposition of the supermartingale $U$ defined in \eqref{u}.  The construction of $L$ and $D$ is inspired by the classical multiplicative decomposition results as in \cite{iw65}, \cite[Theorem 6.17]{Jacod}. However, in difference to these results, the non-increasing process $D$  in our case is in general not predictable, 
even if the corresponding process $a$ is. As it can be seen from \eqref{eq:D}, $D$ is predictable, if $a$ is predictable, and it does not jump at the same time as the martingale $M$; cf.\ also Remark~\ref{ctsfiltr} later on in text. 
\item In \cite[Theorem 3.4]{afp9}, which is a discrete time counterpart of Theorem~\ref{decomposition}, the non-increasing process $D$ is predictable. However, this is just a matter of notation: The process $D$ appearing in \cite[Theorem 3.4]{afp9} corresponds to the  predictable process $D_-$ of Theorem~\ref{decomposition}. Indeed, if $(L,D)$ is a couple of processes as in Theorem~\ref{decomposition}, 
and if we can associate a measure $Q$ on $(\Omega, \F)$ to the local martingale $L$, as explained later on in text, representation \eqref{eq:md} takes the form
 \begin{equation*}
\E_{\mu}[X]=\E_{Q}\lp\int_{[0,T]}D_{s-}dX_s\rp 
\end{equation*}
for any bounded semimartingale $X$ with $X_{0-}:=0$. This representation  corresponds to (3.8) of \cite[Theorem 3.4]{afp9}.
\end{enumerate}
\end{remarks}
\noindent
Clearly, Theorem~\ref{decomposition} provides for any predictable process $a$ a decomposition $(L,D)$, 
such that $\int_0^\cdot L_tdD_t$ is predictable. However, if one seeks to construct a predictable process $a$ starting with a couple  $(L,D)$, it requires more conditions than 1)-3) of Theorem~\ref{decomposition} to ensure predictability. In this case, $D$ should ``compensate'' the non-predictable 
jumps of the local martingale $L$, i.e., the jump process $(\sum_{s\le t} L_s\Delta D_s)_t$ should be predictable. 
This additional  assumption  is not very handy. In the predictable case it seems more natural to use a different construction, namely $a=\int L_-dD$ with a predictable process $D$ and a local martingale $L$. This is done in the next proposition.
\begin{prop}\label{prop:pr}
A process $a:=(a_t)_{t\in [0,T]}$ is an non-decreasing, right-continuous, predictable process with $a_{0-}=0$ and  $\E[a_T]=1$, if and only if there exists a pair of adapted \cd processes $(L,D)$, satisfying properties 1)-3) of Theorem~\ref{decomposition}, such that in addition $D$ is predictable, and 
\begin{itemize}
\item[4')] $\displaystyle{a_t=-\int_{[0,t]} L_{s-} dD_s} \quad \forall t \in [0,T]$ with the convention $L_{0-}:=1$ holds.
 \end{itemize}
The processes  $L$ and $D$ are unique up to undistinguishability on $\llbracket0,\tau\llbracket$, where $\tau$ is as in Theorem~\ref{decomposition}.
Moreover, the pair $(L,D)$ can be chosen such that in addition
\begin{itemize}
\item[5')] $L_t=L_0+ \int_0^t \textbf{1}_{\{D_{s}>0\}}  dL_s$, \quad $ D_t=1+\int_0^t \textbf{1}_{\{L_{s-}>0\}} \,  dD_s \quad \forall\, t\in [0,T]$
\end{itemize}
holds. Under this condition $L$ and $D$ are essentially unique 
on $[0,T]$.  
\end{prop}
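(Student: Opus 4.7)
The strategy is to mimic the structure of the proof of Theorem~\ref{decomposition} but using the \emph{predictable} multiplicative decomposition of the supermartingale $U=M-a$ in place of the optional one. The key bookkeeping difference is that we integrate $L_{-}$ (which is predictable) against $dD$, where $D$ itself is now constructed to be predictable.

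For the ``only if'' direction, I would start from the non-negative class (D) supermartingale $U_t:=\E[a_T\mid\F_t]-a_t=M_t-a_t$ and the stopping times $\tau_n,\tau,\tilde\tau_n$ introduced in Step~1 of the proof of Theorem~\ref{decomposition}. Exploiting predictability of $a$, I would define $D$ as the unique solution of the SDE
\begin{equation*}
D_t=1-\int_0^{t}\frac{D_{s-}}{U_{s-}}\,da_s,
\end{equation*}
which can be written explicitly as
\begin{equation*}
D_t=\exp\!\left(-\int_0^t\frac{1}{U_{s-}}\,da^c_s\right)\prod_{0<s\le t,\,\Delta a_s>0}\!\left(1-\frac{\Delta a_s}{U_{s-}}\right).
\end{equation*}
Because $a$ and $U_{-}$ are both predictable, $D$ is predictable; it is also non-increasing and satisfies $D_{0-}=1$. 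Then define $L$ on $\cup_n\llbracket0,\tilde\tau_n\rrbracket$ as the stochastic exponential of $\int_0^\cdot\frac{1}{U_{s-}}\,dM_s$, exactly as in \eqref{eq:Lloc1}, and extend it to a \cd local martingale on $[0,T]$ via Lemma~\ref{lemmma:locmart}.

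The central verification is that $U=LD$ and $a=-\int_0^\cdot L_{s-}\,dD_s$. Integration by parts for the semimartingale product, together with the fact that $D$ is predictable of finite variation (so $[L,D]_t=\sum_{s\le t}\Delta L_s\Delta D_s$), gives
\begin{equation*}
L_tD_t=L_0D_{0-}+\int_0^t L_{s-}\,dD_s+\int_0^t D_{s-}\,dL_s+[L,D]_t.
\end{equation*}
The predictable finite-variation part of $LD$ on the left must coincide with that of $U=M-a$ on the right by uniqueness of the Doob--Meyer decomposition; I expect the combined stochastic integral term $\int D_- dL+[L,D]$ to be a local martingale (its jumps at predictable times have vanishing conditional expectation), while $\int L_- dD$ is predictable since $L_-$ is predictable. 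Hence $-a=\int L_-dD$ up to an additive constant, which is pinned down by $a_0=-L_0\Delta D_0=-L_{0-}\Delta D_0$ using the convention $L_{0-}=1$. Properties 1)--3) are then checked as in Steps~1--2 of Theorem~\ref{decomposition}; property 5') follows from the defining SDEs just as in Step~3, using that $\{U_{s-}>0\}\subseteq\{D_s>0\}\cap\{L_{s-}>0\}$.

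For the ``if'' direction, given $(L,D)$ with 1)--3) and $D$ predictable, the process $a_t:=-\int_{[0,t]}L_{s-}dD_s$ is predictable (integrand and integrator are predictable), right-continuous, and non-decreasing. To show $\E[a_T]=1$ I would localize $L$ by a sequence $\sigma_n\nearrow T$ with $\sigma_n<T$, apply integration by parts to $L^{\sigma_n}D^{\sigma_n}$ and use the class (D) hypothesis on $LD$ combined with $L_{T-}=\E[L_T\mid\F_{T-}]$ and $\{D_T>0\}\subseteq\{L_T=0\}$ to pass to the limit, exactly as in Step~5 of the proof of Theorem~\ref{decomposition}. Uniqueness up to indistinguishability on $\llbracket 0,\tau\llbracket$ follows by the argument of Step~4: any other pair $(\tilde L,\tilde D)$ satisfying 1), 2), 4') yields $U=\tilde L\tilde D$, and then predictability of $\tilde D$ forces $\tilde D$ to solve the same SDE as $D$ on $\llbracket 0,\tau\llbracket$, so $\tilde D=D$ and $\tilde L=L$ there; imposing 5') then pins down $L$ and $D$ on $\llbracket\tau,T\rrbracket$. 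The main obstacle I expect is the careful handling of the boundary behaviour at $\tau$ (particularly distinguishing the event $\{\Delta a_\tau>0\}$ from $\{U_{\tau-}=0\}$, and ensuring that the predictable version of $D$ absorbs the ``right'' portion of the jump of $a$ at $\tau$), together with confirming that the $[L,D]$ term in integration by parts is indeed absorbed into the martingale part by predictability of $D$ and the projection identity $\E[\Delta L_\sigma\mid\F_{\sigma-}]=0$ for predictable $\sigma$.
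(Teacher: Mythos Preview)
Your construction has a genuine gap: you define $L$ as the stochastic exponential of $\int_0^\cdot \frac{1}{U_{s-}}\,dM_s$, exactly as in Theorem~\ref{decomposition}. But in the predictable case one must replace $U_-$ by the \emph{predictable projection} ${}^pU$ of $U$ (note ${}^pU_s=U_{s-}-\Delta a_s$ since $a$ is predictable), i.e.\ take $L=\mathcal{E}\bigl(\int_0^\cdot \frac{1}{{}^pU_s}\,dM_s\bigr)$. With your $L$, the product $LD$ is \emph{not} equal to $U$ whenever $M$ and $a$ jump simultaneously: a direct computation gives
\[
\frac{(LD)_s}{(LD)_{s-}}=\Bigl(1+\tfrac{\Delta M_s}{U_{s-}}\Bigr)\Bigl(1-\tfrac{\Delta a_s}{U_{s-}}\Bigr)=\frac{U_s}{U_{s-}}-\frac{\Delta M_s\,\Delta a_s}{U_{s-}^2},
\]
so the cross term spoils the multiplicative decomposition. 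This is exactly the content of Remark~\ref{ctsfiltr}(1): the optional and predictable decompositions coincide only when $[M,a]\equiv 0$. Since your SDE for $D$ yields $L_{s-}\,dD_s=-\frac{L_{s-}D_{s-}}{U_{s-}}\,da_s$, the identity $a=-\int L_-\,dD$ would require $L_{-}D_{-}=U_{-}$, which fails in general with your $L$.

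Your Doob--Meyer argument does not repair this: to conclude that the predictable finite-variation parts of $LD$ and $U$ agree you would first need $LD=U$, which is precisely the unverified point. The paper's proof avoids the issue by using ${}^pU$ throughout (so that $L_s/L_{s-}=U_s/{}^pU_s$ and $D_s/D_{s-}={}^pU_s/U_{s-}$, whence $(LD)_s/(LD)_{s-}=U_s/U_{s-}$), and correspondingly works on the stochastic interval determined by the stopping times $\sigma_n=\inf\{t:{}^pU_t\le 1/n\}$ rather than your $\tau_n$; it then invokes the classical predictable multiplicative decomposition (Jacod, Theorem~6.31) to get $U=LD$ directly, and obtains 4') from the uniqueness of the Doob--Meyer decomposition of $U=LD$. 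Your $D$ happens to be the right one (its jump factor $1-\Delta a_s/U_{s-}$ equals ${}^pU_s/U_{s-}$), so the fix is localized to the definition of $L$ and the corresponding boundary analysis at $\tau$, which must distinguish $\{{}^pU_\tau=0\}$ rather than $\{U_{\tau-}=0\}$.
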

\begin{proof} The proof of the ``if'' part follows exactly as in Step 5 of the proof of Theorem~\ref{decomposition}: Obviously, the process $a$ defined by 4') is predictable, and, since $D$ is predictable, the equality \eqref{uniq} holds in the same way for $\E\lp- \int_{[0,\sigma_n]}L_{t\wedge\sigma_n-}dD_t\rp$.

To prove ``only if'', we use the classical multiplicative decomposition of the supermartingale $U$ defined in \eqref{u} as
\begin{equation*}
U_t=\E[a_T \vert \mathcal{F}_t]-a_t=M_t-a_t,\qquad t\in[0,T].                                                                                                                                 
\end{equation*}
The construction of $D$ and $L$ basically follows as in the proof on Theorem~\ref{decomposition}, with the difference that $U_-$ has to be replaced by the predictable projection of $U$, denoted by $^pU$. The process $D$ is defined  via
\begin{equation*}
\label{eq:DSDEpr}
D_t=1-\int_0^{t} \frac{D_{s-}}{^pU_s} 
da_s, \qquad t\in [0,T], 
\end{equation*}
i.e., $D_{0-}:=1$ and 
\begin{equation}\label{eq:D:pr}
D_t:=\exp\left(-\int_0^{t} \frac{1}{^pU_s} d a_s^c \right) \prod_{0\le s\leq t, \, \Delta a_s>0} \frac{^pU_s}{U_{s-}}, \quad t\in [0,T]. 
\end{equation}
$D$ is well-defined, predictable, right-continuous, and non-increasing on $[0,T]$. 
We have also $D=D^\tau$,   
where $\tau$ is the stopping time defined in \eqref{tau}.\\  
To define the process $L$, let $B:= \{^pU_{\tau}=0\}$, and denote by $\tau_{B}$ the restriction of $\tau$ to $B$.  Due to \cite[(6.23), (6.24), Corollary 6.28]{Jacod}, there exists an non-decreasing  sequence of stopping times $(\sigma_n)$, such that 
$\frac{1}{^pU}\ind_{\llbracket0,\sigma_n\rrbracket}\le n$ for all $n\in\inte$, $\tau=\lim_n\sigma_n$, and
\[
\cup_n\llbracket0,\sigma_n\rrbracket= \cup_n\llbracket0,\tau_n\rrbracket\cap\llbracket0,\tau_{B}\llbracket=\llbracket0,\tau\rrbracket\cap\llbracket0,\tau_{B}\llbracket,
\]
where $\tau_n$ are stopping times defines in \eqref{taun}.
Hence we have $\tau_B=\lim_n\tilde\sigma_n$, where
\[ 
\tilde \sigma_n:= 
\begin{cases}
 \sigma_n & \text{on $\{\sigma_n<\tau\}$},\\
T & \text{on $\{\sigma_n=\tau\}$},
\end{cases}
\]
and $\tau_B$ is a predictable stopping time. 
\\
Using the same argumentation as in Step 1 of the proof of Theorem~\ref{decomposition}, we define the process $L$ as the stochastic exponential of the local martingale $\int_0^{\cdot}\frac{1}{^pU_{s}} d M_s$ on $\cup_{n\in\inte}\llbracket 0,\tilde\sigma_n\rrbracket$, and extend it to a local martingale on $[0,T]$ as in Lemma~\ref{lemmma:locmart}.
This yields
\begin{equation*}
L_t = 1 +\int_0^{t} \textbf{1}_{\{^pU_{s}>0\}} \frac{L_{s-}}{^pU_{s}} d M_s, \qquad  t \in [0,T], 
\end{equation*}
and 
\begin{align}\label{eq:Lexp:pr}
\nonumber L_t=\exp \left(\int_0^{t\wedge\tau} \frac{1}{^pU_{s}} d M_s^c - \right.&  \left.\frac12 \int_0^{t\wedge\tau}  \left|\frac{1}{^pU_{s}}\right\vert^2 d \langle M \rangle_s\right)\\& \times \prod_{\substack{0<s\leq t\wedge\tau,\\ \Delta M_s\ne0 }} \left(\frac{U_s}{^pU_{s}}\right),\qquad t\in[0,T].
\end{align} 
$L$  is well defined at $\tau$, since $\Delta M_\tau=a_\tau-M_{\tau-}=-^pU_{\tau}$, and thus $\Delta M_\tau=0$ on $\{^pU_{\tau}=0\}$. We also have
\begin{equation*}
 \E\lp L_T\mk\F_{T-}\rp=\E\lp L_{T-}+\textbf{1}_{\{^pU_{T}>0\}} L_{T-}\frac{\Delta M_T}{^pU_{T}}\mk\F_{T-}\rp=L_{T-}.
\end{equation*}
Due to \cite[Theorem 6.31]{Jacod}, $L$ and $D$ provide a multiplicative decomposition of $U$, i.e., 
\begin{equation}\label{eq:da:pr}
U_t=L_t D_t 
\end{equation}
holds on $\cup_n\llbracket0,\sigma_n\rrbracket=\llbracket0,\tau\rrbracket\cap\llbracket0,\tau_B\llbracket$. Since $U=U^\tau$, $L=L^\tau$, and $D=D^\tau$, \eqref{eq:da:pr} holds also on $\cup_{n\in\inte}\llbracket 0,\tilde\sigma_n\rrbracket$. 
It remains to prove \eqref{eq:da:pr} for  $(\omega, t)\in\llbracket\tau_B, T\rrbracket$ and $\omega\in\{^pU_{\tau}=0\}$. To this end, note that  $D_{\tau_B}=0$ on $\{^pU_{\tau}=0\}\cap\{\Delta a_{\tau_B}>0\}$ by \eqref{eq:D:pr}, hence $0=U_{\tau_B}=L_{\tau_B}D_{\tau_B}$ on this set.  On the set $\{^pU_{\tau}=0\}\cap\{\Delta a_{\tau_B}=0\}$ we have $^pU_{\tau}=U_{\tau-}$, thus $\tau_B=\tau_A$, and we can conclude as in Step 2 of the proof of Theorem~\ref{decomposition}.  
\\
Thanks to \eqref{eq:da:pr} and integration by parts formula, we have
\begin{equation*}
 U_t=\E\lp a_T|\F_t\rp-a_t=L_tD_t=\int_0^tD_sdL_s+\int_{[0,t]}L_{s-}dD_s,\qquad t\in[0,T],
\end{equation*}
and thus property 4') follows from the uniqueness of the Doob-Meyer decomposition. 
Concerning property 5'), note that by definition of $D$ we have ${\{^pU_{t}>0\}}\subseteq {\{D_{t}>0\}}$ for all $t\in [0,T]$, and hence
\begin{equation*}
L_t = L_0 +\int_0^{t } \textbf{1}_{\{^pU_{s}>0\}}dL_s= L_0 +\int_0^t \textbf{1}_{\{D_{s>0}\}} dL_s.
\end{equation*}
Similarly, by definition of $L$ 
we have ${\{^pU_t >0\}}\subseteq {\{L_{t-}>0\}}$ for all  $t\in [0,T]$, thus 
\begin{eqnarray*}
D_t = 1-\int_0^t \textbf{1}_{\{^pU_s>0\}} \frac{D_{s-}}{^pU_s} da_s= 1+\int_0^t \textbf{1}_{\{L_{s-}>0\}} dD_s, \quad t\in[0,T].
\end{eqnarray*}
\\
In order to prove uniqueness, we can again apply the same argumentation as in Step 4 of the proof of Theorem~\ref{decomposition}, to conclude that every pair of processes $(\tilde{L},\tilde{D})$ satisfying properties 1)-4') of Proposition~\ref{prop:pr} provides a multiplicative decomposition of the supermartingale $U$. Hence uniqueness on $\cup_n\llbracket0,\sigma_n\rrbracket=\llbracket 0, \tau\rrbracket\cap\llbracket 0, \tau_B\llbracket$  follows from \cite[Corollary 6.28, Theorem 6.31]{Jacod}. 
In particular, we have $L_{\tau_-}=\tL_{\tau_-}$, $D_{\tau_-}=\tD_{\tau_-}$, and $\tilde L_\tau= L_\tau=0$ on $B\cap\{L_{\tau_B-}=0\}$ due to the local martingale property. Moreover, 
since 
\[
0={}^pU_{\tau_B}=U_{\tau_B-}-\Delta a_{\tau_B}=\tL_{\tau_B-}\tD_{\tau_B-}+\tL_{\tau_B-}\Delta \tD_{\tau_B}= L_{\tau_B-}\tD_{\tau_B}
\]
on $B$, we have $\tilde D_\tau= D_\tau=0$ on $B\cap\{L_{\tau_B-}>0\}$.  Property 5') implies further $\tilde D_\tau= D_\tau$ on $B\cap\{L_{\tau_B-}=0\}$, $\tilde L_\tau= L_\tau$ on $B\cap\{L_{\tau_B-}>0\}$,  and also $\tilde D= D$, $\tL=L$  on $\rrbracket\tau, T\rrbracket$. This concludes the proof. 
\end{proof}
\begin{remarks}\label{ctsfiltr}
\begin{enumerate}
 \item If  $(L, D)$ is the decomposition of a predictable process $a$ as in Theorem~\ref{decomposition}, and $(\tL,\tD)$ its decomposition as in Proposition~\ref{prop:pr},  then $a=\int LdD=\int \tL_-d\tD$, but in general we do not have $D=\tD$ and $L=\tL$. As it can be seen from \eqref{eq:D}, \eqref{eq:D:pr}, \eqref{eq:Lexp}, and \eqref{eq:Lexp:pr}, we have  
 $D=\tD$ and $L=\tL$ if and only if the martingale $M=(\E[a_T|\F_t])_{t\in[0,T]}$ and the process $a$ do not jump at the same time, i.e., iff the bracket process $[M,a]=(\sum_{s\le t}\Delta M_s\Delta a_s)_{t\in[0,T]}$ is undistinguishable from $0$. 
\item  In a view of the previous remark, the decompositions $(L, D)$ as in Theorem~\ref{decomposition}, and $(\tL,\tD)$ as in Proposition~\ref{prop:pr} coincide 
if the filtration $(\F_t)$ is continuous.  
\item It follows directly from \eqref{eq:D} (resp.\ \eqref{eq:D:pr}) and property 4) (resp.\ 4')), that the process $a$ is purely discontinuous if and only if the process $D$ is purely discontinuous.
\end{enumerate}
\end{remarks}

In the rest of this section we discuss how one can associate a measure $Q$  on $(\Omega, \F_T)$ to the local martingale $L$; in this case representation \eqref{eq:md} takes the form 
 \begin{equation}\label{eq:qu}
 \E_{\mu}[X]=\E_Q[-\int_{[0,T]}X_sdD_s]. 
\end{equation}
We fix a process $a\in\Z_1$, or alternatively, a measure $\mu\in\M_1(\Op)$, and denote by $(L,D)$ the corresponding decomposition satisfying conditions 1)-4) of Theorem~\ref{decomposition}. The three following cases can occur:\\
\textbf{Case 1:} $L$ is a uniformly integrable martingale. Then we can define a probability measure $Q$ on the $\sigma$-field $\F_T$ in a straightforward way by $\frac{dQ}{dP}:=L_T$. We have $Q\ll P$, and $D_T=0$ $Q$-a.s. Since $L$ is uniformly integrable martingale, 
\cite[VI.57]{dm2} yields for any bounded optional process $X$
\begin{equation}\label{eq:measure}
 \E\lp\int_0^TX_tL_tdD_t\rp=\E\lp L_T\int_0^TX_tdD_t\rp=\E_Q\lp\int_0^TX_tdD_t\rp.
\end{equation}

\begin{remark}\label{uimart}
Case 1 holds in particular, if the measure $\mu$ is concentrated on $\Omega\times\{T\}$, i.e., if $a_t=0$ for all $t\in[0,T)$. Then the supermartingale $U$ defined in \eqref{u} coincides with the uniformly integrable martingale $(\E\lp a_T|\F_t\rp)$ on $[0,T)$, and it's multiplicative decomposition is given by $L_t:=\E\lp a_T|\F_t\rp$, $t\in[0,T]$, and $D_t:=1$, $t\in[0,T)$, $D_T:=0$. In this case \eqref{eq:md} takes the form
\[
 \E_{\mu}[X]=\E_Q[X_T].
\]
\end{remark}
\noindent
\textbf{Case 2:} $L$ is a true martingale on $[0,T)$, which is not uniformly integrable, i.e. $\E[L_T]<1$. Note that this case can occur also if $T<\infty$, cf. \cite[Remark 1.3]{kkn12}. In this case one can associate a measure $Q$ to the process $L$, if the filtration satisfies some additional technical conditions: Assume that $\F_T=\F_{T-}=\bigvee_{t\in[0,T)}\F_t$, and that $(\F_t)_{t\in[0,T)}$ is the so called \emph{$N$-augmentation} of some filtered probability space, as defined in \cite[Proposition 2.4]{NajNik09}, see also \cite{Bichteler}. Moreover, assume that the non-augmented filtered probability space satisfies \emph{condition (P)} of \cite[Definition 4.1]{NajNik09}, cf.\ also \cite{par67}. 

The main idea in this case is to use Parthasarathy's (\cite{par67}) measure extension result, as done in \cite{f72}; see also \cite[Corollary 4.10]{NajNik09}, \cite[Theorem 3.4]{afp9}, \cite[Theorem 1.1]{kkn12}. We define a measure $Q_t$ locally on each $\F_t$ by $\frac{dQ_t}{d\P}:=L_t$. Under the assumptions above, the consistent family $(Q_t)_{t\in[0,T)}$ can be extended to a unique measure $Q$ on $\F_T$, such that $Q|_{\F_t}=Q_t$ for all $t$. Note that $Q$ is locally absolutely continuous with respect to $\P$, i.e., $Q\ll \P$ on each $\F_t$, $t\in[0,T)$, but $Q$ is not absolutely continuous with respect to $\P$ on $\F_T$. 
For this reason the filtration $(\F_t)$ cannot be completed with zero sets of $\F_T$.  However, in this case the ``usual conditions'' can be replaced by \emph{$N$-usual conditions}, 
cf.\ \cite{NajNik09} and \cite{Bichteler}. 
\begin{corollary}\label{ashkan}
Assume that $\F_T=\F_{T-}$, and that $(\Omega, (\F_t)_{t\in[0,T)}, \P)$ is the $N$-augmentation of a filtered probability space that  satisfies the property (P). Let $a\in\Z_1$ with the decomposition $(L,D)$ as in Theorem~\ref{decomposition}, such that the process $L$ is a martingale on $[0,T)$. Then there exists a probability measure $Q$ on $\F_T$, that is locally absolutely continuous with respect to $\P$,  such that $D_T=0$ $Q$-a.s.\ and  
\begin{equation}\label{eq:underQ}
 \E\lp\int_{[0,T]}X_tda_t\rp=\E_Q\lp-\int_{[0,T]}X_tdD_t\rp
\end{equation}
for any bounded optional process $X$.
\end{corollary}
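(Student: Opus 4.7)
The plan is to combine the local construction of a Föllmer-type measure $Q$ via Parthasarathy's extension with the decomposition from Theorem~\ref{decomposition}, reducing the duality identity, via localization, to the uniformly integrable case (Case 1).

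\emph{Construction of $Q$ and $D_T=0$ $Q$-a.s.} For each $t \in [0,T)$ set $dQ_t/d\P := L_t$ on $(\Omega, \F_t)$. Since $L$ is a $\P$-martingale on $[0,T)$, the family $(Q_t)_{t \in [0,T)}$ is consistent. Under the two filtration hypotheses, Parthasarathy's extension theorem (cf.\ \cite[Corollary 4.10]{NajNik09}, \cite{f72}, \cite[Theorem 1.1]{kkn12}) produces a unique probability $Q$ on $\F_T$ with $Q|_{\F_t}=Q_t$ for all $t<T$; in particular $Q\ll \P$ on each $\F_t$, $t<T$. The standard Föllmer property of the extension gives $Q(L_t>0\text{ for all }t\in[0,T])=1$, so $Q(L_T=0)=0$, and property 2) of Theorem~\ref{decomposition} then yields $D_T=0$ $Q$-a.s.

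\emph{Duality on $[0,T)$.} By property 4), the left-hand side of (\ref{eq:underQ}) equals $\E[-\int_{[0,T]} X_t L_t\,dD_t]$. Choose a localizing sequence $(\tau_n)$ for $L$ with $\tau_n<T$ $\P$-a.s.\ and $\tau_n\nearrow T$, so each $L^{\tau_n}$ is uniformly integrable. Then \cite[VI.57]{dm2} together with $Q|_{\F_{\tau_n}}=L_{\tau_n}\cdot \P|_{\F_{\tau_n}}$ gives
\begin{equation*}
\E\lp\int_{[0,\tau_n]} X_t L_t\,dD_t\rp=\E\lp L_{\tau_n}\int_{[0,\tau_n]}X_t\,dD_t\rp=\E_Q\lp\int_{[0,\tau_n]} X_t\,dD_t\rp.
\end{equation*}
Dominated convergence ($\|X\|_\infty a_T\in L^1(\P)$ on the $\P$-side, $\|X\|_\infty$ on the $Q$-side) yields the analogous identity over $[0,T)$ as $n\to\infty$.

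\emph{Boundary term at $T$.} From property 4) and $L_T D_T=0$, one gets $\Delta a_T=-L_T\Delta D_T=L_T D_{T-}$, while $D_T=0$ $Q$-a.s.\ gives $-\Delta D_T=D_{T-}$, so matching (\ref{eq:underQ}) reduces to
\begin{equation*}
\E\lp X_T L_T D_{T-}\rp=\E_Q\lp X_T D_{T-}\rp,\qquad X_T\in L^\infty(\F_T).
\end{equation*}
I would prove this by showing that the finite measures $\lambda_1(A):=\E[1_A L_T D_{T-}]=\E[1_A\Delta a_T]$ and $\lambda_2(A):=\E_Q[1_A D_{T-}]$ agree on $\bigcup_{s<T}\F_s$, whence they agree on $\F_T=\F_{T-}$ by a monotone class argument. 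For $A\in\F_s$ with $s<T$ and $s<t_n\nearrow T$,
\begin{equation*}
\E_Q[1_A D_{t_n}]=\E[L_{t_n}D_{t_n}1_A]=\E[U_{t_n}1_A]\longrightarrow\E[U_{T-}1_A]=\E[\Delta a_T\cdot 1_A],
\end{equation*}
where $U:=LD$ and the convergence uses the class (D) property of $U$ (property 3)) together with $U_{T-}=M_{T-}-a_{T-}=a_T-a_{T-}=\Delta a_T$; the last identity follows from $M_{T-}=\E[a_T|\F_{T-}]=a_T$ under $\F_T=\F_{T-}$, with $M_t:=\E[a_T|\F_t]$ as in \eqref{u}. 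Bounded convergence under $Q$ then gives $\E_Q[1_A D_{t_n}]\to\E_Q[1_A D_{T-}]$, so $\lambda_1(A)=\lambda_2(A)$. The main technical obstacle is precisely this boundary identification, where the class (D) property of $U$ is decisive.
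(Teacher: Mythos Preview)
Your argument is largely correct and parallels the paper: the construction of $Q$ via Parthasarathy's extension and the identity on $[0,T)$ by localization are exactly the paper's Steps. Your treatment of the boundary contribution at $T$ is a genuine variant. The paper invokes the Lebesgue decomposition $Q[A]=\E_\P[L_{T-}1_A]+Q[A\cap\{L_{T-}=\infty\}]$ and then proves separately, using the class~(D) property of $LD$, that $D_{T-}=0$ $Q$-a.s.\ on the singular set $\{L_{T-}=\infty\}$; only then does it compute $\E_\P[X_TL_T\Delta D_T]=\E_Q[X_T\Delta D_T]$ directly. You instead match two finite measures on the $\pi$-system $\bigcup_{s<T}\F_s$ and extend by a monotone class argument. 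This is a clean alternative: class~(D) is still the engine (it gives $\E_\P[1_AU_{t_n}]\to\E_\P[1_AU_{T-}]$), but you never need to isolate the singular part of $Q$ or establish $D_{T-}=0$ on $\{L_{T-}=\infty\}$ as an intermediate step.

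There is, however, a gap in your argument for $D_T=0$ $Q$-a.s., on which your reduction of the $Q$-side boundary term to $\E_Q[X_TD_{T-}]$ depends. The inclusion $\{D_T>0\}\subseteq\{L_T=0\}$ in property~2) is a $\P$-a.s.\ statement, and both $L_T$ and $D_T$ are only defined as $\P$-equivalence classes. The set $\{L_{T-}=\infty\}$ is $\P$-null but may have positive $Q$-measure, so neither $L_T$ nor $D_T$ is a priori defined there under $Q$; the ``standard F\"ollmer property'' gives $Q(L_t>0,\ t<T)=1$ and $L_{T-}\in(0,\infty]$ $Q$-a.s., but says nothing about the values at $T$ on the singular set without a convention. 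The paper closes this by explicitly setting $D_T:=0$ on $\{L_{T-}=\infty\}$. Once you adopt the same convention, $Q(D_T>0)=\E_\P[L_{T-}1_{\{D_T>0\}}]=\E_\P[L_{T}1_{\{D_T>0\}}]=0$ (using $L_T=L_{T-}$ $\P$-a.s.\ from $\F_T=\F_{T-}$), and your boundary reduction is justified.
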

\begin{proof} We define the measure $Q$ as explained above. Let $\tau_n$ be any sequence of stopping times such that $\tau_n<T$, $\tau_n\nearrow T$ $P$-a.s.. Then $L^{\tau_n}$ is a uniformly integrable martingale for each $n$, and the same argumentation as in \eqref{eq:measure} yields for any bounded optional process $X$
\begin{equation*}
 \E\lp\int_0^{\tau_n}X_tL_tdD_t\rp=\E_Q\lp\int_0^{\tau_n}X_tdD_t\rp,\qquad n\in\inte.
\end{equation*}
By dominated convergence,  $\E\lp\int_0^{T-}X_tL_tdD_t\rp=\E_Q\lp\int_0^{T-}X_tdD_t\rp$, and it remains to prove the equality at $T$. To this end we argue as in \cite{afp9}:  By \cite[Lemma 2, Lemma 3]{kls79}, the limit $L_{T-}=\lim_{t\to T}L_t$ exists $\P$- and $Q$-a.s., and the measure $Q$ has Lebesgue decomposition on $\F_T$ with respect to $\P$ given by
\begin{equation}\label{lebesgue}
 Q[A]=\int_AL_{T-}d\P+Q[A\cap\{L_{T-}=\infty\}],\qquad A\in\F_T.
\end{equation}
Moreover, since $(D_t)_{t\in[0,T)}$ is non-increasing under $Q$, the limit $D_{T-}$ exists also $Q$-a.s. By construction, the random variable $D_T$ is defined under $\P$, and hence under $Q$ only on the set $\{L_{T-}<\infty\}$.  We define $D_T:=0$ on the set $\{L_{T-}=\infty\}$. Note further that $L_T=L_{T-}$ $\P$-a.s., since $\F_T=\F_{T-}$ and $\E[L_T|\F_{T-}]=L_{T-}$. This implies
\begin{equation*}
 Q[\{D_T>0\}]=\E_\P\lp\ind_{\{D_T>0\}}L_{T-}\rp+Q[\{D_T>0\}\cap\{L_{T-}=\infty\}]\\
=\E_\P\lp\ind_{\{D_T>0\}}L_{T}\rp=0,
\end{equation*}
where we have used that $L_TD_T=0$ $\P$-a.s.. Moreover, we have $D_{T-}=0$ on $\{L_{T-}=\infty\}$ $Q$-a.s.\ thanks to \eqref{lebesgue} and the fact that $LD$ is of class (D). Indeed, we have for any sequence of stopping times $(\tau_n)$ as above 
\begin{align}\label{clsD}
\nonumber\E_Q\lp  D_{T-}\ind_{\{L_{T-}=\infty\}}\rp&=\E_Q\lp  D_{T-}\rp-\E_\P\lp L_{T-} D_{T-}\rp \\
\nonumber&=\E_Q\lp  D_{T-}\rp-\lim_n\E_\P\lp L_{\tau_n} D_{\tau_n}\rp \\
&=\E_Q\lp  D_{T-}\rp-\lim_n\E_Q\lp  D_{\tau_n}\rp=0,
\end{align}
where  the the last equality holds due to monotone convergence.
Hence we obtain
\begin{align*}
\E_\P\lp X_TL_T\Delta D_T\rp&= \E_\P\lp- X_TL_{T-}D_{T-}\rp\\
&=\E_Q\lp- X_TD_{T-}\rp-\E_Q\lp- X_TD_{T-}\ind_{\{L_{T-}=\infty\}}\rp\\
&=\E_Q\lp X_T\Delta D_T\rp,
\end{align*}
where we have used that $L_T=L_{T-}$, $L_TD_T=0$ $\P$-a.s., \eqref{lebesgue}, $D_{T-}=0$ on $\{L_{T-}=\infty\}$, and $D_{T}=0$ $Q$-a.s.. This proves \eqref{eq:underQ} also at $T$ and completes the proof.
\end{proof}
\begin{remark}
Note that $LD$ is of class (D) under $\P$ if and only if $D_{T-}=0$ on $\{L_{T-}=\infty\}$ $Q$-a.s.. Indeed, the ``only if'' part was proved in \eqref{clsD}. 
To see that also the converse is true, we define the stopping times
\[
\sigma_n:=\inf\lk t\mk L_t\ge n\rk,\qquad n\in\inte.
\]
By monotone convergence
\begin{equation*}
0=\E_Q\lp D_{T-}\ind_{\{L_{T-}=\infty\}}\rp=\lim_n\E_Q\lp D_{\sigma_n}\ind_{\{\sigma_n<T\}}\rp=\lim_n\E_\P\lp L_{\sigma_n}D_{\sigma_n}\ind_{\{\sigma_n<T\}}\rp.
\end{equation*}
Since $0\le D\le 1$ $\P$-a.s., \cite[Theorem VI.25]{dm2} implies that $LD$ is of class (D). 
\end{remark}
\noindent
\textbf{Case 3:} If $L$ is a \emph{strict} local martingale, and the filtration $(\F_t)_{t\in[0,T)}$ is a standard system (cf.\ \cite{par67}, \cite{f72}), it is still possible to associate a measure $Q$ to $L$, as done in \cite{f72}, cf.\ also \cite[Theorem 1.8]{kkn12}. However, in this case not even the $N$-augmentation of the filtration can be used, and one would have to work with a non-completed filtration. This imposes many technical restrictions, and goes beyond the scope of the present paper. 

\section{Robust representation of convex risk measures on $\Ri$}\label{sec:robrep}

In this section we first recall some notation and the representation result for convex risk measures on $\Ri$ from \cite{cdk4}. 
We consider the space of pairs of finite variation processes
\begin{align*}
 \A^1:=\Big\{a:[0,T]\times\Omega\to\real^2 \mk & a=(a^{\rm op}, \ap)=(a^{\rm op}_t, \ap_t)_{t\in[0,T]},\\
&\ao,\ap\, \text{right continuous, of finite variation},\\
&\ap\, \text{predictable}, \ap_0=0,\\
&a^{\rm op}\, \text{optional, purely discontinuous},\\
&{\rm Var}(\ap)+\var(\ao)\in L^1(\P)\Big\}.
\end{align*}
The space $\A^1$ is a Banach space with the norm
\[
 \|a\|_{\A^1}:=\E\lp{\rm Var}(\ap)+\var(\ao)\rp,
\]
and any element of $\A^1$ defines a linear form on $\Ri$ via
\begin{equation}\label{linform2}
a(X):=\E\lp \int_0^TX_{t-}d\ap_t+\int_{[0,T]}X_{t}d\ao_t\rp,\quad X\in\Ri.
\end{equation}
Let further $\A^1_+$ denote the subset of all non-decreasing elements of $\A^1$, and
\[
 \Z_1^d:=\lk a=(\ap, \ao)\in\A^1_+\mk 
\|a\|_{\A^1}=1\rk.
\]
Given a subset $\hat\Z$ of $\Z_1^d$, a function $\gamma\::\: \Z_1^d\to[0,\infty]$ is called a \emph{penalty function on $\hat\Z$}, if
\[
\inf_{a\in\hat\Z}\gamma(a)=0.
\]
For a monetary convex risk measure for processes   $\rho$, a typical penalty function is the conjugate of $\rho$:
\begin{equation}
\label{eq:alpha}
\alpha(a):=\rho^*(a):=\sup_{X\in\Ri} \left(a(-X)-\rho(X)\right)= \sup_{X\in\A} a(-X),\quad a\in\Z_1^d.
\end{equation}
Here $\A$ denotes the acceptance set defined in Remark~\ref{rem:ac}.

As usually, dual representation of a convex risk measure is closely related to its continuity properties. 
\begin{definition}
 A monetary convex risk measure for processes $\rho$ is called
\begin{itemize}
 \item \emph{continuous from above with respect to sup-convergence in probability} (resp.\ \emph{with respect to pointwise convergence in probability}), if
\[
 \lim_{n\to\infty}\rho(X^n)=\rho(X)
\]
for every non-increasing sequence $(X^n)\subset\Ri$ and $X\in\Ri$, such that $(X^n-X)^*\to0$ in probability (resp.\ such that $X_t^n-X_t\to0$ $\P$-a.s.\ for all $t\in[0,T]$).
\item \emph{continuous from below with respect to sup-convergence in probability} (resp.\ \emph{with respect to pointwise convergence in probability}), if
\[
 \lim_{n\to\infty}\rho(X^n)=\rho(X)
\]
for every non-decreasing sequence $(X^n)\subset\Ri$ and $X\in\Ri$, such that $(X_t^n-X_t)^*\to0$ in probability (resp.\ such that $X_t^n-X_t\to0$ $\P$-a.s.\ for all $t\in[0,T]$).
\end{itemize}
\end{definition}

The following result was proved in \cite[Theorem 3.3]{cdk4}.
\begin{theorem}\label{cdk}
For a functional $\rho$ on $\Ri$ the following conditions are equivalent:
\begin{enumerate}
 \item $\rho$ can be represented as
\begin{equation}\label{robrep}
 \rho(X)=\sup_{a\in\Z_1^d}\left(a(-X)-\gamma(a)\right),\quad X\in\Ri,
\end{equation}
with a penalty function $\gamma$ on $\Z_1^d$.
\item $\rho$ is a monetary convex risk measure that is continuous from above with respect to sup-convergence in probability.
\end{enumerate}
Moreover, if (1)-(2) are satisfied, the function $\alpha$ defined in \eqref{eq:alpha} is a penalty function on $\Z_1^d$ such that 
\[
 \alpha(a)\le\gamma(a)\quad\text{for all}\quad a\in\Z_1^d,
\]
and the representation \eqref{robrep} holds also with $\gamma$ replaced by $\alpha$.
\end{theorem}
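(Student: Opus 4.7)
The plan is to prove the two implications separately, and then derive the ``moreover'' statement as a quick Fenchel--Moreau observation.

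The direction $(1) \Rightarrow (2)$ is routine verification. Cash invariance follows from $a(\ind_{[0,T]}) = \E[\ap_T + \ao_T] = \|a\|_{\A^1} = 1$ for any $a \in \Z_1^d$, which translates every affine functional in the supremum by $-m$ upon adding $m\ind_{[0,T]}$. Inverse monotonicity is inherited from each $a \in \A^1_+$ acting monotonically on $\Ri$; convexity and normalization are immediate from the supremum structure and $\inf_{\Z_1^d}\gamma = 0$. For continuity from above, I would fix $\varepsilon > 0$, select an $\varepsilon$-optimal $a^* \in \Z_1^d$ for $\rho(X)$, and bound $\rho(X) - \rho(X^n) \le a^*(X^n - X) + \varepsilon$ by applying the supremum representation to $X^n$. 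Since $(X^n)$ is uniformly bounded in $\|\cdot\|_{\Ri}$ and $a^* \in \A^1$, dominated convergence applied along an a.s.-convergent subsequence of $(X^n - X)^*$ gives $a^*(X^n - X) \to 0$, and the monotonicity of the sequence $(\rho(X^n))$ transports the conclusion to the full sequence.

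The direction $(2) \Rightarrow (1)$ is the core of the theorem. The strategy is to realize $\rho$ as a proper convex functional on the Banach space $(\Ri, \|\cdot\|_{\Ri})$ and invoke Fenchel--Moreau biconjugation. A Hahn--Banach separation produces a first representation in terms of the topological dual of $\Ri$, which consists of pairs of \emph{finitely additive} set functions and is therefore too large. The key analytic step is to upgrade this to a representation supported on the countably additive objects of $\A^1$ coupled to $\Ri$ by the pairing \eqref{linform2}: continuity from above with respect to sup-convergence in probability is precisely the hypothesis that kills the purely finitely additive parts of the dual, in the spirit of a Daniell--Stone or Yosida--Hewitt type decomposition. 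Once this is in place, inverse monotonicity restricts the effective domain of the penalty to $\A^1_+$ (a representing $a$ that fails to be non-decreasing could be amplified along some $X \ge 0$ with $a(X) < 0$ to violate $\rho(\lambda X) \le \rho(0) = 0$ as $\lambda \to \infty$), and cash invariance combined with $\rho(0) = 0$ then forces $\|a\|_{\A^1} = 1$ on the effective domain, allowing the supremum to be restricted to $\Z_1^d$.

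The main obstacle I anticipate is precisely this upgrade from finitely to countably additive dual elements, which is the nontrivial technical content of \cite[Theorem 3.3]{cdk4} and mirrors analogous arguments for convex risk measures on $L^\infty$ using the Yosida--Hewitt decomposition. The ``moreover'' statement is then immediate: from $\rho(X) \ge a(-X) - \gamma(a)$ for all $X$ one reads off $\alpha(a) = \sup_X(a(-X) - \rho(X)) \le \gamma(a)$, so $\alpha$ itself is a $[0,\infty]$-valued penalty function with $\inf_{\Z_1^d}\alpha = -\rho(0) = 0$; replacing $\gamma$ by $\alpha$ enlarges each affine functional in the representation, while the reverse inequality $\rho(X) \ge \sup_a(a(-X) - \alpha(a))$ is built into the very definition of $\alpha$.
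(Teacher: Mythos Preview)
The paper does not prove this theorem; it simply records it as ``The following result was proved in \cite[Theorem 3.3]{cdk4}.'' There is therefore no in-paper proof to compare against.

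Your outline is essentially the standard argument behind \cite[Theorem 3.3]{cdk4}, and the identification of the crux is correct: the substantial work in $(2)\Rightarrow(1)$ is precisely the passage from the full norm-dual of $(\Ri,\|\cdot\|_{\Ri})$ to the pairing with $\A^1$ via \eqref{linform2}, using continuity from above with respect to sup-convergence to eliminate the singular parts. Two small points are worth tightening. First, your description of the dual of $\Ri$ as ``pairs of finitely additive set functions'' is heuristic; the actual structure of this dual is not so explicit, and in \cite{cdk4} the argument proceeds by showing directly that any continuous linear functional dominated by $\rho$ (hence continuous from above for sup-convergence) is already represented by an element of $\A^1$, cf.\ also \cite[Theorem VII.2]{dm2}. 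Second, in your $(1)\Rightarrow(2)$ argument, note that the linear form $a(\cdot)$ in \eqref{linform2} integrates both $X$ and $X_-$; sup-convergence (equivalently, $(X^n-X)^*\to 0$) is exactly what guarantees pointwise convergence of \emph{both} along an a.s.\ subsequence, cf.\ \cite[Lemma VII.2]{dm2}, so your dominated-convergence step is justified, but it is worth making this explicit since mere pointwise convergence of $X^n_t$ would not suffice. The ``moreover'' paragraph is correct as written.
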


For any $a=(\ap, \ao)\in\A^1$, the linear form \eqref{linform2} can be written as
\begin{align}\label{linform3}
\nonumber a(X)&=E\left[\int_{(0,T]}X_{t-}da_t^{\text{pr}}+\int_{[0,T]}X_{t}da_t^{\text{op}}\right]\\
 &=E\left[\int_{[0,T]}X_{t}d(a_t^{\text{pr}}+a_t^{\text{op}})-\sum_{0< t\le T}{}^p(\Delta X)_{t}\Delta a_t^{\text{pr}}\right],
\end{align}
where $^p(\Delta X)$ denotes the predictable projection of the purely discontinuous part of $X\in\Ri$. 
For $a\in\Z_1^d$, the process $\ap+\ao$ defines a normalized optional measure as we have considered in Section~\ref{sec:dec}; cf.\ \eqref{eq:z1}. However, the linear form in \eqref{linform3} involves an additional  singular term $\sum{}^p(\Delta X)\Delta a^{\text{pr}}$, depending on the nature of the jumps of $X$. 

Our main goal in the rest of this section  will be finding conditions on the risk measure $\rho$, under which it can be represented in terms of
ordinary optional measures, as defined in \eqref{eq:z1}. 
This simplified form is  particularly useful for construction of  risk measures for processes, e.g., all examples in \cite[Section 5]{cdk4}, and also our examples in Section~\ref{BSDE} are of this form. We begin by noting that the space of optional measures $\B^1$ defined in  Section~\ref{sec:dec} can be identified with a subspace of  $\A^1$.
\begin{remark}\label{BsubsetA}
To any $a\in\B^1$ we can associate a pair $\tilde{a}:=(a^{\rm c}, a-a^{\rm c})\in\A^1$, where $a^{\rm c}$ denotes the continuous part of $a$,
and $a-a^{\rm c}$ its purely discontinuous part. Then $\|\tilde{a}\|_{\A^1}=\E\lp\var(a)\rp$, and 
\begin{equation}\label{sameforms}
\tilde{a}(X)=\E\lp \int_{[0,T]}X_{t}da_t\rp.
\end{equation}
Conversely, any pair of processes  $\tilde{a}=(\ap, \ao)\in\A^1$ such that $\ap$ is continuous, defines an element $a:=\ap+\ao\in\B^1$ such that \eqref{sameforms} holds.
Thus we can identify $\B^1$ with the subspace
\[
 \lk \tilde{a}=(\ap, \ao)\in\A^1\mk \ap\:\text{continuous}\rk 
\]
of $\A^1$, and for any $a\in\B^1$ the linear form $a(X)$ takes the form \eqref{sameforms} on $\Ri$.
\end{remark}
\noindent

The key to the dual representation of a convex risk measure is an appropriate continuity property. The reason why a \emph{pair} of processes appears in the robust representation \eqref{robrep} is condition of continuity from above 
with respect to \emph{sup}-convergence in probability. By \cite[Lemma VII 2]{dm2}, sup-convergence for \cd functions amounts to pointwise convergence of the paths \emph{and} of their left limits. Thus any positive linear functional on $\Ri$, that is continuous 
from above with respect to sup-convergence in probability, is of the form \eqref{linform3}, and 
involves two processes of finite variation, 
cf.\  \cite[Theorem VII 2]{dm2}.

On the other hand, by Daniell-Stone Integration Theorem (cf., e.g., \cite[Theorem A.49]{fs11}), any positive linear functional on $\Ri$, that is continuous from above with respect to \emph{pointwise} convergence in probability, can be represented  as in \eqref{sameforms}
for some $a\in\B_+^1$.  
This suggests to make a stronger requirement of continuity from above with respect to pointwise convergence in probability, in order to obtain a representation  of a risk measure in terms of $\Z_1$.  The requirement is necessary: 
\begin{lemma}\label{lem:suff}
Let $\rho$ be  a functional on $\Ri$ such that  
\begin{enumerate}
 \item $\rho$ can be represented as
\begin{equation}\label{robrepbeta}
 \rho(X)=\sup_{a\in\Z_1}\left(a(-X)-\gamma(a)\right),\quad X\in\Ri,
\end{equation}
with a penalty function $\gamma$ on $\Z_1$. 
\end{enumerate}
Then
\begin{enumerate}
\item[2.] $\rho$ is a monetary convex risk measure, that is continuous from above with respect to pointwise convergence in probability.
\end{enumerate}
\end{lemma}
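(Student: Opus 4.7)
The four axioms of a monetary convex risk measure follow at once from the supremum representation. Normalization uses that $\gamma$ is a penalty function: $\rho(0)=\sup_{a\in\Z_1}(-\gamma(a))=-\inf_{a\in\Z_1}\gamma(a)=0$. Cash invariance uses that for $a\in\Z_1$ one has $a(\ind_{[0,T]})=\E[a_T-a_{0-}]=\E[a_T]=1$, so $a(-(X+m\ind_{[0,T]}))=a(-X)-m$ and hence $\rho(X+m\ind_{[0,T]})=\rho(X)-m$. Inverse monotonicity is immediate because every $a\in\Z_1$ is a positive linear form on $\Ri$ (integration against the nonnegative random measure $da$), so $X\le Y$ implies $a(-X)\ge a(-Y)$ for all $a$. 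Convexity is the usual consequence of a pointwise supremum of affine maps.

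It remains to establish continuity from above with respect to pointwise convergence in probability. Let $(X^n)\subset\Ri$ be non-increasing with $X\in\Ri$ and $X^n_t\to X_t$ $\P$-a.s.\ for every $t\in[0,T]$. Inverse monotonicity gives $\rho(X^n)\le\rho(X)$, and the sequence $(\rho(X^n))_n$ is non-decreasing; therefore $\lim_n\rho(X^n)\le\rho(X)$. For the reverse inequality, the central claim is that for every fixed $a\in\Z_1$,
\begin{equation*}
a(-X^n)\longrightarrow a(-X)\quad\text{as } n\to\infty.
\end{equation*}
Granting this, for each $a\in\Z_1$,
\[
a(-X)-\gamma(a)=\lim_n\bigl(a(-X^n)-\gamma(a)\bigr)\le\liminf_n\rho(X^n)=\lim_n\rho(X^n),
\]
and taking the supremum over $a\in\Z_1$ on the left gives $\rho(X)\le\lim_n\rho(X^n)$, which combined with the first inequality completes the proof.

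To verify $a(-X^n)\to a(-X)$, set $Y^n:=X^n-X$. Since $(X^n)$ is non-increasing with pointwise $\P$-a.s.\ limit $X$ at each $t$, one may discard a single $\P$-null set (obtained as a countable union of the null sets corresponding to a dense countable subset of $[0,T]$) and use right-continuity of $X^n$ and $X$ to arrange that $Y^n(\omega,\cdot)\ge 0$ pointwise, is non-increasing in $n$, and $Y^n_t(\omega)\downarrow 0$ for each $t$. Each $Y^n$ is uniformly bounded by $2(\|X^0\|_{\Ri}+\|X\|_{\Ri})\in L^\infty$. Since $a\in\Z_1\subset\B^1_+$ defines a finite positive optional measure $\mu_a\in\M_1(\Op)$ as in Section~\ref{sec:dec}, monotone convergence against the pathwise measure $da(\omega)$, followed by dominated convergence in $\omega$ against $\P$, yields
\[
a(Y^n)=\E\lp\int_{[0,T]} Y^n_t\,da_t\rp\longrightarrow 0.
\]

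The main technical subtlety is the step from $\P$-a.s.\ convergence at each fixed $t$ to convergence against the random measure $da_t(\omega)$: the $\P$-null set on which convergence fails could in principle depend on $t$, and $a\in\Z_1$ may charge graphs of stopping times. This is resolved by restricting to a countable dense subset of $[0,T]$ (where the null sets can be unioned into a single $\P$-null set) and then invoking the right-continuity of $X^n$ and $X$, which is the only regularity of the paths that is needed, to extend convergence to every $t$ pathwise. With this pathwise convergence in hand, the monotone/dominated convergence argument against $\mu_a$ requires no further assumption on $a$.
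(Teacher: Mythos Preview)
Your overall strategy---reduce to $a(Y^n)\to 0$ for each fixed $a\in\Z_1$---is exactly the ``standard argument'' the paper gestures at. The gap is the extension step: convergence on a countable dense set $D$ together with right-continuity does \emph{not} force convergence at every $t$. For a deterministic illustration, take $t_0\notin D$ and $g_n(t):=\ind_{[t_0,\,t_0+1/n)}(t)$: these functions are c\`adl\`ag, nonnegative, non-increasing in $n$, and $g_n(t)\to 0$ for all $t\in D$, yet $g_n(t_0)=1$ for every $n$. Right-continuity only lets you identify a value from its right limits; it cannot rule out a jump sitting precisely at a point outside $D$.

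Worse, the obstruction is not an artefact of your route. Let $\tau$ be a stopping time with diffuse law (e.g.\ on $\Omega=[0,1]$ with Lebesgue measure and the constant filtration $\F_t\equiv\F$, take $\tau(\omega)=\omega$ and $T\ge 1$), and put $X^n_t:=\ind_{[\tau,\,\tau+1/n)}(t)$, $X:=0$. Then $(X^n)\subset\Ri$ is non-increasing, and $X^n_t\to 0$ $\P$-a.s.\ for each deterministic $t$ since $\P(\tau=t)=0$. But for $a_t:=\ind_{\{\tau\le t\}}\in\Z_1$ one has $a(X^n)=\E[X^n_\tau]=1$ for all $n$, so the coherent risk measure $\rho(\cdot)=a(-\,\cdot)$, which is of the form \eqref{robrepbeta}, satisfies $\rho(X^n)=-1\not\to 0=\rho(0)$. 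The conclusion would be correct if ``pointwise convergence'' were required at all \emph{stopping times}---then the optional section theorem yields $X^n\to X$ off an evanescent set and monotone convergence against the optional measure finishes the job---but with convergence assumed only at deterministic $t$, neither your argument nor the standard one the paper invokes goes through.
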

\begin{proof} It is easy to see that $\rho$ satisfies the axioms of Definition~\ref{def:rm}. Continuity from above follows by standard arguments as, e.g., in the proof \cite[Lemma 4.21]{fs11}. 
\end{proof}
\noindent
We  conjecture, that conditions 1) and 2) of Lemma~\ref{lem:suff} are in fact equivalent.
Unfortunately, after spending quite some time thinking about it, we are neither able to prove that 2) implies 1), nor could we find a counterexample.

We could prove representation \eqref{robrepbeta} under the assumption of \emph{continuity from below} with respect to pointwise convergence in probability. This is a stronger requirement than continuity from above, as shown in the next lemma.
The result of this lemma is well known in the context of convex risk measures for bounded random variables,  cf., e.g., \cite[Remark 4.25]{fs11}. 
However, the proof there relies on the particular representation of a risk measure for random variables, and cannot be applied in our present framework. The following general argument was communicated to us by Michael Kupper, and we thank him for allowing us to include it in this paper. 

\begin{lemma}\label{michael}
Let $\X$ be a topological vector space, and $\rho:\X\to\real$ any convex functional such that $\rho(X)\le\rho(Y)$ for any $X,Y\in\X$ with $Y\le X$.  Assume further that $\rho$ is continuous from below in the following sense:
\[
 \rho(X_n)\searrow\rho(X)\quad\text{for any non-decreasing sequence}\quad (X_n)\subset {\X}, X_n\nearrow X.
\]
Then $\rho$ is continuous from above, i.e., 
\[
 \rho(X_n)\nearrow\rho(X)\quad\text{for any non-increasing sequence}\quad (X_n)\subset {\X}, X_n\searrow X.
\]
\end{lemma}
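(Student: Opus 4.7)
The plan is to get the easy bound $\limsup_n \rho(X_n) \le \rho(X)$ directly from inverse monotonicity, and then to obtain a matching lower bound $\liminf_n \rho(X_n) \ge \rho(X)$ via a convex combination trick that reduces the claim to the hypothesis of continuity from below. Since $X_n \searrow X$ means $X_n \ge X$, inverse monotonicity gives $\rho(X_n) \le \rho(X)$ for every $n$, and the same monotonicity applied to $X_n \ge X_{n+1}$ shows that $(\rho(X_n))_n$ is non-decreasing. In particular the limit $\lim_n \rho(X_n)$ exists and is at most $\rho(X)$.

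For the reverse inequality I would fix $\lambda \in (0,1)$ and introduce the auxiliary sequence
\[
Y_n^\lambda := \frac{X - \lambda X_n}{1-\lambda}, \qquad n \in \inte,
\]
which lies in $\X$ by the vector space structure and is chosen precisely so that $\lambda X_n + (1-\lambda) Y_n^\lambda = X$ for every $n$. A one-line algebraic check gives $Y_{n+1}^\lambda - Y_n^\lambda = \frac{\lambda}{1-\lambda}(X_n - X_{n+1}) \ge 0$, while continuity of scalar operations in $\X$ yields $Y_n^\lambda \to X$; hence $Y_n^\lambda \nearrow X$. Applying the hypothesis of continuity from below to this sequence produces $\rho(Y_n^\lambda) \searrow \rho(X)$.

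Finally, convexity of $\rho$ applied to the identity $X = \lambda X_n + (1-\lambda) Y_n^\lambda$ gives $\rho(X) \le \lambda \rho(X_n) + (1-\lambda) \rho(Y_n^\lambda)$, which I rearrange to
\[
\rho(X_n) \ge \frac{1}{\lambda}\,\rho(X) - \frac{1-\lambda}{\lambda}\,\rho(Y_n^\lambda).
\]
Passing to $\liminf$ as $n \to \infty$ and using $\rho(Y_n^\lambda) \to \rho(X)$ yields $\liminf_n \rho(X_n) \ge \rho(X)$, which combined with the first paragraph delivers $\rho(X_n) \nearrow \rho(X)$. The only genuinely inventive step is spotting $Y_n^\lambda$: the natural temptation is to split $X_n$ as a convex combination involving $X$, but one actually needs to split $X$ itself so that convexity is applied to $\rho(X)$ rather than to $\rho(X_n)$. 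The parameter $\lambda$ disappears from the final bound after rearrangement; its only role is to ensure that $Y_n^\lambda$ is a bona fide element of $\X$ and that $\lambda X_n + (1-\lambda) Y_n^\lambda$ is a legitimate convex combination.
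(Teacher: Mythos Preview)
Your proof is correct and rests on the same convexity idea as the paper's, just packaged differently. The paper first normalizes to $\rho(0)=0$, then translates via $\tilde\rho(\cdot):=\rho(\cdot+X)-\rho(X)$ to reduce to the case $X=0$; there the auxiliary sequence is simply $-X_n\nearrow 0$, and the convexity step is the midpoint inequality $0=\rho(0)\le\tfrac12\rho(X_n)+\tfrac12\rho(-X_n)$, giving $\rho(X_n)\ge -\rho(-X_n)\nearrow 0$. Your construction $Y_n^\lambda=(X-\lambda X_n)/(1-\lambda)$ is exactly what one obtains by undoing that translation and allowing a general $\lambda\in(0,1)$: specialized to $X=0$ and $\lambda=\tfrac12$ it reproduces the paper's argument verbatim. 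The direct route saves the reduction step; the paper's version makes the key inequality $\rho(X_n)\ge-\rho(-X_n)$ a bit more visible.
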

\begin{proof} 
W.l.o.g.\ we can assume that $\rho(0)=0$, otherwise consider $\tilde{\rho}(\cdot):=\rho(\cdot)-\rho(0)$.\\ First we show that continuity from below at $0$ implies continuity from above at $0$. Indeed, let $(X_n)\subset\X, X_n\searrow 0$. Then monotonicity, convexity, $\rho(0)=0$, and continuity from below at $0$ imply
\[
 0\ge\rho(X_n)\ge-\rho(-X_n)\nearrow0.
\]
For the general case, let $(X_n)\subset\X, X_n\searrow X_0$, and consider the functional 
\[
\tilde{\rho}(X):=\rho(X+X_0)-\rho(X_0),\quad X\in\X. 
\]
It is easy to see that $\tilde{\rho}$ is a monotone convex functional with $\tilde{\rho}(0)=0$, continuous from below in $0$.
By the previous argument $\tilde{\rho}$ is continuous from above at $0$, which implies
\[
\rho(X_n)\nearrow\rho(X_0), 
\]
i.e., $\rho$ is continuous from above. 
\end{proof}

Continuity from below with respect to sup-convergence in probability for convex risk measures on $\Ri$ was characterized in \cite[Theorem 3.1]{assa11}.  The following theorem combines this result with the argumentation inspired by \cite[Theorem 4.22]{fs11}.

\begin{theorem}\label{contfrombelow}
Let $\rho$ be a monetary convex risk measure on $\Ri$, that is continuous from below with respect to pointwise convergence in probability. Then $\rho$ has representation \eqref{robrep}, where any penalty function $\gamma$ is concentrated on the set $\Z_1$ of normalized optional measures. In particular, $\rho$ has the representation \eqref{robrepbeta}, and the supremum is attained, i.e., we have
\begin{equation}\label{robrepmax}
 \rho(X)=\max_{a\in\Z_1}\left(a(-X)-\gamma(a)\right),\quad X\in\Ri. 
\end{equation}
Moreover, the level sets 
\begin{equation}\label{ls}
\Lambda_c:=\lk a\in\Z_1^d\mk \alpha(a)\le c\rk,\quad\quad c>0,
\end{equation}
are compact in $\sigma(\B^1, \Ri)$.
 \end{theorem}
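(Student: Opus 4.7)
The starting point is to leverage the already-established duality. By Lemma~\ref{michael}, the hypothesis of continuity from below with respect to pointwise convergence in probability implies continuity from above with respect to the same convergence. Since sup-convergence in probability is stronger than pointwise convergence in probability, this in particular gives continuity from above with respect to sup-convergence, so Theorem~\ref{cdk} applies and yields representation \eqref{robrep} on $\Z_1^d$ with the conjugate $\alpha$ from \eqref{eq:alpha} as penalty function. Because $\alpha \le \gamma$ for every admissible $\gamma$, it suffices to prove $\alpha(a) = \infty$ for every $a = (\ap, \ao) \in \Z_1^d \setminus \Z_1$, that is, whenever the predictable component $\ap$ fails to be continuous.

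For the concentration argument, suppose $\ap$ has a genuine jump. By the exhaustion of the jumps of a predictable cadlag increasing process by graphs of predictable stopping times, there exists a predictable stopping time $\tau$ with $\E[\Delta \ap_\tau] > 0$. Let $(\tau_n)$ be a strict announcing sequence with $\tau_n \nearrow \tau$. For an arbitrary $C > 0$, define the ``predictable spike''
\[
X^n := -C\, \ind_{[\tau_n, \tau[} \in \Ri.
\]
Each $X^n$ is cadlag, non-positive, bounded by $C$, and since $[\tau_n,\tau[\downarrow \emptyset$ the sequence satisfies $X^n \nearrow 0$ pointwise. Continuity from below therefore forces $\rho(X^n) \to \rho(0) = 0$. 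A direct computation using $X^n_{t-} = -C\,\ind_{]\tau_n,\tau]}(t)$ together with cadlag-ness of $\ap$ and $\ao$ (so that $\ap_{\tau_n} \to \ap_{\tau-}$ and $\ao_{\tau_n-}\to \ao_{\tau-}$) gives
\[
a(X^n) = -C\,\E[\ap_\tau - \ap_{\tau_n}] - C\,\E[\ao_{\tau-} - \ao_{\tau_n -}] \xrightarrow[n \to \infty]{} -C\,\E[\Delta \ap_\tau].
\]
The representation inequality $\rho(X^n) \ge -a(X^n) - \alpha(a)$ then forces $\alpha(a) \ge C\,\E[\Delta \ap_\tau]$ in the limit. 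Since $C$ is arbitrary and $\E[\Delta \ap_\tau] > 0$, we conclude $\alpha(a) = \infty$. Consequently every penalty function is concentrated on $\Z_1$, and representation \eqref{robrepbeta} holds.

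It remains to establish compactness of the level sets $\Lambda_c$ and attainment of the supremum, following the template of \cite[Theorem 4.22]{fs11} adapted to the pairing $(\B^1, \Ri)$. The set $\Lambda_c$ is contained in $\Z_1$ by the concentration just established, bounded in $\B^1$-norm by $1$, and $\sigma(\B^1, \Ri)$-closed because the restriction of $\alpha$ to $\Z_1$ is a supremum of the $\sigma(\B^1,\Ri)$-continuous linear functionals $a \mapsto a(-X)$ over $X \in \A$, hence $\sigma(\B^1, \Ri)$-lower semi-continuous. To upgrade closed-and-bounded to compact one needs a Dunford--Pettis-type condition (uniform integrability of $\{da : a \in \Lambda_c\}$), which is extracted from continuity from below in the spirit of \cite[Theorem 3.1]{assa11}; this compactness step is the main technical obstacle of the proof, since $\B^1$ plays the role of $L^1$ for the $\sigma$-field $\Op$ which admits no dominating measure. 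Once compactness of $\Lambda_c$ is in hand, for every $X \in \Ri$ the map $a \mapsto a(-X) - \alpha(a)$ is upper semi-continuous on the $\sigma(\B^1,\Ri)$-compact set $\Lambda_c$ with $c$ chosen slightly above the value of the supremum, so the supremum in \eqref{robrepbeta} is attained, giving \eqref{robrepmax}.
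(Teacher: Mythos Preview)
Your proof is correct, and your concentration argument takes a genuinely different route from the paper's. The paper argues indirectly: it invokes Lemma~\ref{fs_lemma} to show that for every $a\in\Lambda_c$ and every sequence $Y^n\searrow 0$ pointwise one has $a(Y^n)\to 0$, i.e.\ that the linear functional $a$ is itself continuous from above with respect to pointwise convergence; then Daniell--Stone yields that $a$ is represented by an optional measure, whence $a\in\Z_1$. Your argument is direct and more elementary: you locate a predictable jump of $\ap$, build the explicit spike sequence $X^n=-C\ind_{[\tau_n,\tau[}$ announcing it, and read off $\alpha(a)\ge C\,\E[\Delta\ap_\tau]$ from the basic duality inequality together with continuity from below. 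This avoids Daniell--Stone entirely and pinpoints exactly which feature of $a$ (a predictable jump of $\ap$) is incompatible with pointwise continuity from below. The paper's route, on the other hand, proves a slightly stronger intermediate statement (sequential continuity of $a$ on $\Lambda_c$), which is closer in spirit to the uniform integrability needed for compactness---though in the end both the paper and you defer the actual compactness and attainment to \cite[Theorem~3.1]{assa11}, so neither approach gains a real advantage there.

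One small remark on your write-up: your computation of $a(X^n)$ and its limit uses dominated convergence implicitly (dominated by $\ap_T+\ao_T\in L^1$), and the case $\tau=0$ is harmless since $\ap_0=0$ by the definition of $\A^1$; both points are routine but worth a word if you polish the argument.
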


For the proof we will use the following lemma, which is a reformulation of \cite[Lemma 4.23]{fs11}, and can be proved in completely analogous way in our present context. 

\begin{lemma}\label{fs_lemma}
Let $\rho$ be a monetary convex risk measure on $\Ri$ with the representation \eqref{robrep}, and consider the level sets $\Lambda_c$ defined in  \eqref{ls}. Then for any sequence $(X_n)$ in $\Ri$ such that $0\le X_n\le 1$, the following two conditions are equivalent:
\begin{enumerate}
 \item $\rho(\lambda X_n)\to\rho(\lambda\ind_{[0,T]})$ for each $\lambda\ge1$.
\item $\inf_{a\in\Lambda_c}a(X_n)\to1$ for all $c>0$.
\end{enumerate}
\end{lemma}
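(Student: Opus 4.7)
The approach is to translate both conditions into statements about the same supremum functional of $n$, and then use the level sets $\Lambda_c$ to cut the sup into two pieces. The main observation is a rewriting based on cash invariance. Normalization and cash invariance give $\rho(\lambda\ind_{[0,T]})=-\lambda$, while for any $a\in\Z_1^d$ one has $a(\ind_{[0,T]})=\|a\|_{\A^1}=1$. Combining these with the robust representation \eqref{robrep} (with minimal penalty $\alpha$) yields
\[
\rho(\lambda X_n)-\rho(\lambda\ind_{[0,T]})=\rho(\lambda X_n)+\lambda=\sup_{a\in\Z_1^d}\Bigl(\lambda\bigl(1-a(X_n)\bigr)-\alpha(a)\Bigr).
\]
Since $0\le X_n\le\ind_{[0,T]}$ and elements of $\Z_1^d$ correspond to nonnegative measures of total mass $1$, we have $0\le a(X_n)\le1$, so the bracketed quantity is a difference of a nonnegative term and the nonnegative penalty, and the whole supremum is itself nonnegative because $\inf_a \alpha(a)=0$.

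\textbf{Direction (2)$\Rightarrow$(1).} Fix $\lambda\ge1$ and any $c\ge\lambda$, and split the supremum along $\Lambda_c$. On $\Lambda_c$, $\alpha(a)\ge0$ gives
\[
\lambda\bigl(1-a(X_n)\bigr)-\alpha(a)\le\lambda\sup_{a\in\Lambda_c}\bigl(1-a(X_n)\bigr)=\lambda\Bigl(1-\inf_{a\in\Lambda_c}a(X_n)\Bigr),
\]
which tends to $0$ by hypothesis~(2). Off $\Lambda_c$ we use $1-a(X_n)\le1$ and $\alpha(a)>c\ge\lambda$, so the bracket is strictly less than $\lambda-c\le0$. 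Hence the full supremum is bounded above by $\lambda(1-\inf_{\Lambda_c}a(X_n))\vee0$, which vanishes as $n\to\infty$. Combined with the lower bound $\ge0$, this gives $\rho(\lambda X_n)+\lambda\to0$.

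\textbf{Direction (1)$\Rightarrow$(2).} Fix $c>0$. For any $a\in\Lambda_c$, $\alpha(a)\le c$, so
\[
\lambda\bigl(1-a(X_n)\bigr)\le\lambda\bigl(1-a(X_n)\bigr)-\alpha(a)+c\le\rho(\lambda X_n)+\lambda+c.
\]
Taking the supremum over $a\in\Lambda_c$ and dividing by $\lambda$,
\[
0\le1-\inf_{a\in\Lambda_c}a(X_n)\le\frac{\rho(\lambda X_n)+\lambda}{\lambda}+\frac{c}{\lambda}.
\]
For each fixed $\lambda\ge1$, hypothesis~(1) sends the first summand to $0$ as $n\to\infty$, so $\limsup_n(1-\inf_{\Lambda_c}a(X_n))\le c/\lambda$. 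Letting $\lambda\to\infty$ yields $\inf_{a\in\Lambda_c}a(X_n)\to1$. No step appears to be a serious obstacle; the only care needed is in the rewriting via cash invariance and in verifying $0\le a(X_n)\le1$ for $a\in\Z_1^d$ from the explicit formula \eqref{linform2} using $0\le X_n\le1$ and $\|a\|_{\A^1}=1$.
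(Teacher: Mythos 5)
Your proof is correct, and it is essentially the paper's intended argument: the paper does not write the proof out but states that the lemma is a reformulation of \cite[Lemma 4.23]{fs11} and is proved "in completely analogous way", which is precisely the splitting of the supremum $\rho(\lambda X_n)+\lambda=\sup_{a}\bigl(\lambda(1-a(X_n))-\alpha(a)\bigr)$ along the level set $\Lambda_c$ that you carry out, together with the observations $\rho(\lambda\ind_{[0,T]})=-\lambda$ and $0\le a(X_n)\le a(\ind_{[0,T]})=1$ for $a\in\Z_1^d$.
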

\noindent
\emph{Proof of Theorem~\ref{contfrombelow}.}
First we note that by Lemma~\ref{michael} $\rho$ is continuous from above with respect to pointwise convergence in probability, hence also with respect to sup-convergence in probability, and  by Theorem~\ref{cdk} $\rho$ has representation~\eqref{robrep} with some penalty function $\gamma$ on $\Z_1^d$. We will show that $\gamma(a)<\infty$ implies $a\in\Z_1$. It suffices to prove this for the minimal penalty function $\alpha$. 

To this end let $(Y^n)_{n\in\inte}$ be a sequence in $\Ri$ such that $Y^n_t\searrow0$ $\P$-a.s.\ for all $t$, and consider $X^n:=\ind_{[0,T]}-\delta Y^n$, where $\delta>0$ is chosen such that $X^n_t\ge0$ for all $t$ (e.g. $\delta:=\frac{1}{\|Y^0\|_{\Ri}+1}$ does the job). 
Then $0\le X^n\le1$, and $\lambda X^n\nearrow\lambda\ind_{[0,T]}$  $\P$-a.s.\ for all $t$  for any $\lambda>0$. Continuity from below implies $\rho(\lambda X^n)\searrow\rho(\lambda\ind_{[0,T]})$, and by Lemma~\ref{fs_lemma}
\[
 1-\delta a(Y^n)=a(X^n)\to1\quad\text{for all}\quad a\in\Lambda_c.
\]
Hence $a(Y^n)\searrow0$ for all $a\in\Lambda_c$. i.e., $a$ is continuous from above with respect to pointwise convergence in probability. 
By Daniell-Stone Integration Theorem (cf., e.g., \cite[Theorem A.49]{fs11}, \cite[Theorem III 35]{dm1}), there exists a positive measure $\mu$ on $(\Omega, \Op)$ such that $a(X)=\int Xd\mu$ for all $X\in\Ri$. As in the proof of \cite[Theorem VII 2]{dm2}, it can be seen that $\mu$ disappears on  $\P$-evanescent sets. Then, due to Dolean's representation result \cite[Theorem VI 65]{dm2}, 
and uniqueness of the linear form \eqref{linform2}, we can identify $a$ with some $\tilde{a}\in\B_+^1$  as in Remark~\ref{BsubsetA}.
This proves (with some abuse of notation) that $a\in\B^1\cap\Z_1^d=\Z_1$ for any $a\in\Z_1^d$ such that $\alpha(a)<\infty$. In particular, representation~\eqref{robrepbeta} holds. Moreover,  since $\rho$ is continuous from below with respect to sup-convergence in probability, \cite[Theorem 3.1]{assa11} implies that the supremum in \eqref{robrep} is attained for each $X\in\Ri$ by some $\bar a\in\Z_1^d$. We must have $\gamma(\bar a)<\infty$ in this case, and thus $\bar a\in\Z_1$.  Compactness of the sets $\Lambda_c$ for any $c>0$ in $\sigma(\B^1, \Ri)$ follows also from \cite[Theorem 3.1]{assa11}.
{}\hfill$\square$\\

\section{Model and discounting ambiguity}\label{sec:discamb}

This section combines the results of Sections~\ref{sec:dec} and \ref{sec:robrep}. 
We denote by $\Lm$ the set of all non-negative \cd local martingales $L=(L_t)_{t\in[0,T]}$ such that  $L_{T-}=\E\lp L_T|\F_{T-}\rp$, and by $\Lm^1$ the set of all $L\in\Lm$ with $L_0=1$. For $L\in\Lm$, $\Dl$ denotes the set of all processes $D$  satisfying conditions 2)-3) of Theorem~\ref{decomposition}, i.e.,
\begin{align*}
 \Dl:=\Big\{D=(D_t)_{t\in[0,T]}\mk & D\;\text{adapted, right-continuous, non-increasing, s.t.}\; D_{0-}=1,\\&\qquad \quad\quad\{D_T>0\}\subseteq\{L_T=0\},\,\text{and}\; LD\;\text{is of class (D)}\Big\}. 
\end{align*}
Correspondingly, $\Dlp$ denotes the set of all predictable processes as in Proposition~\ref{prop:pr} without jump at $0$, i.e.,
\begin{equation*}
 \Dlp:=\Big\{D\in \Dl\mk  D\;\text{predictable}, D_0=1\Big\}, 
\end{equation*}
and $\D^{\rm d}(L)$  the set of all $D\in\Dl$ such that $D$ is a purely discontinuous process. We also introduce the set
\[
 \S^1_+:=\lk (L, D, L',D')\mk L,L'\in\Lm, L_0+L'_0=1, D\in\Dlp, D'\in{\mathcal D}^{\rm d}(L')\rk.
\]
By Theorem~\ref{decomposition}, Proposition~\ref{prop:pr}, and 3) of Remark~\ref{ctsfiltr}, we can identify the sets $\Z_1^d$ and $\S^1_+$, i.e., a process $a=(\ap,\ao)\in\Z_1^d$, iff there exist $(L,D, L',D')\in\S^1_+$, such that $\ap=\int_0^\cdot L_{s-}dD_s$, $\ao=\int_{[0,\cdot]}L'_{s}dD'_s$. We also deliberately identify penalty functions $\gamma$ on $\Z_1^d$ and on $\S^1_+$ via
\[
 \gamma(L,D, L',D'):=\gamma\left(\int_0^\cdot L_{s-}dD_s, \int_{[0,\cdot]}L'_{s}dD'_s\right)\quad\text{for}\;(L,D, L',D')\in\S^1_+.
\]
Combining Theorem~\ref{cdk} with Theorem~\ref{decomposition} and Proposition~\ref{prop:pr}, we obtain the following corollary. 
\begin{corollary}\label{cdk_rev}
For a functional $\rho$ on $\Ri$ the following conditions are equivalent:
\begin{enumerate}
 \item For each $X\in\Ri$ we have
\begin{equation}\label{eq:repcdk}
\rho(X)=\sup_{(L,D, L', D')\in\S^1_+}\left(\E\lp-\int_0^TX_{t-}L_{t-}dD_t-\int_{[0,T]}X_{t}L'_{t}dD'_t\rp-\gamma(L,D, L', D')\right)
\end{equation}
with a penalty function $\gamma$ on $\S^1_+$.
\item $\rho$ is a monetary convex risk measure that is continuous from above with respect to sup-convergence in probability.
\end{enumerate}
\end{corollary}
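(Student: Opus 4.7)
\begin{Proof}
The plan is to reduce the statement to a direct translation of Theorem~\ref{cdk} via the identification between $\Z_1^d$ and $\S^1_+$ that was set up in the paragraph immediately preceding the corollary. By Theorem~\ref{cdk}, condition (2) is already equivalent to the existence of a penalty function $\gamma$ on $\Z_1^d$ such that $\rho(X)=\sup_{a\in\Z_1^d}(a(-X)-\gamma(a))$; the only work left is to rewrite this supremum in the form \eqref{eq:repcdk}.

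First I would establish the bijection $\Z_1^d\leftrightarrow\S^1_+$ explicitly. Given $a=(\ap,\ao)\in\Z_1^d$, set $p:=\E[\ap_T]$ and $q:=\E[\ao_T]$, so that $p+q=1$. If $p>0$, Proposition~\ref{prop:pr} applied to the normalized process $\ap/p$ (and then rescaling to $L_0=p$ as in Corollary~\ref{cor:dec}) produces a pair $(L,D)\in\Lm\times\Dlp$ with $\ap_t=-\int_0^t L_{s-}dD_s$; if $p=0$, take $L\equiv0$, $D\equiv1$. Analogously, Theorem~\ref{decomposition} applied to $\ao$ yields $(L',D')$ with $L'_0=q$ and $\ao_t=-\int_{[0,t]}L'_sdD'_s$. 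Since $\ao$ is purely discontinuous by the definition of $\A^1$, Remark~\ref{ctsfiltr}(3) forces $D'$ to be purely discontinuous, i.e. $D'\in\D^{\rm d}(L')$. Thus $(L,D,L',D')\in\S^1_+$. Conversely, starting from any $(L,D,L',D')\in\S^1_+$, the two formulas define $\ap\in\B^1$ (predictable) and $\ao\in\B^1$ (purely discontinuous optional), and Theorem~\ref{decomposition} (together with Proposition~\ref{prop:pr}) gives $\E[\mathrm{Var}(\ap)]=L_0$ and $\E[\mathrm{Var}(\ao)]=L'_0$, hence $\|a\|_{\A^1}=L_0+L'_0=1$, so $a\in\Z_1^d$.

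Second, under this identification the linear form $a(X)$ of \eqref{linform2} translates to the expectation appearing in \eqref{eq:repcdk} by direct substitution of $d\ap_t=-L_{t-}dD_t$ (property 4') of Proposition~\ref{prop:pr}) and $d\ao_t=-L'_t dD'_t$ (property 4) of Theorem~\ref{decomposition}):
\[
a(X)=\E\lp\int_0^T X_{t-}\,d\ap_t+\int_{[0,T]}X_t\,d\ao_t\rp=\E\lp-\int_0^T X_{t-}L_{t-}\,dD_t-\int_{[0,T]}X_t L'_t\,dD'_t\rp.
\]
Defining $\gamma$ on $\S^1_+$ by the transport convention already fixed before the statement, the representation \eqref{robrep} becomes exactly \eqref{eq:repcdk}. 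This yields (2) $\Rightarrow$ (1). The converse (1) $\Rightarrow$ (2) follows by the same translation: any $\rho$ satisfying \eqref{eq:repcdk} is, after rewriting in terms of $a$, of the form \eqref{robrep}, and Theorem~\ref{cdk} then gives all the properties listed in (2).

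The main obstacle is ensuring that the bijection $\Z_1^d\leftrightarrow\S^1_+$ is well-defined and compatible with the transfer of the penalty function. The uniqueness statements in Theorem~\ref{decomposition} (up to undistinguishability on $\llbracket0,\tau\llbracket$, and essentially on $[0,T]$ under the additional property 5)) and the analogous statement in Proposition~\ref{prop:pr} guarantee this; the degenerate cases $p=0$ or $q=0$ are absorbed because $\S^1_+$ only demands $L_0+L'_0=1$ with $L_0,L'_0\geq 0$. Verifying this compatibility carefully is the only nontrivial point; everything else reduces to inserting the decomposition formulas into the known representation from Theorem~\ref{cdk}.
\end{Proof}
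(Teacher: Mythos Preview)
Your proposal is correct and follows exactly the approach intended by the paper: the corollary is stated there as an immediate consequence of Theorem~\ref{cdk} together with the identification $\Z_1^d\leftrightarrow\S^1_+$ set up via Theorem~\ref{decomposition}, Proposition~\ref{prop:pr}, and Remark~\ref{ctsfiltr}(3), and your writeup simply spells out these details. One small caveat: the correspondence is really a surjection $\S^1_+\to\Z_1^d$ rather than a strict bijection (uniqueness of $(L,D)$ holds only under the extra condition 5)), but since the penalty on $\S^1_+$ is defined by pullback from $\Z_1^d$ and the expectation depends only on the induced $a$, this does not affect the supremum and your argument goes through unchanged.
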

\noindent
Thanks to Theorem~\ref{contfrombelow}, dual representation takes a simpler form under the assumption of continuity from below with respect to pointwise convergence in probability:
\begin{corollary}\label{cor:contfrombelow}
If $\rho$ is a monetary convex risk measure on $\Ri$ that is continuous from below with respect to pointwise convergence in probability, it has the representation 
\begin{equation}\label{robrepmax1}
 \rho(X)=\sup_{L\in\Lm^1}\sup_{D\in\Dl}\left(\E\lp\int_{[0,T]}X_tL_tdD_t\rp-\gamma(L,D)\right),\quad X\in\Ri,
\end{equation}
where
\[
 \gamma(L,D):=\gamma\left(\int_{[0,\cdot]} L_{s}dD_s\right)
\]
is a penalty function on $\Z_1$. Moreover,  the supremum in \eqref{robrepmax1} is attained by some $L\in\Lm^1$ and $D\in\Dl$ for each $X\in\Ri$.
 \end{corollary}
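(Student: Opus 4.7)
My plan is to derive the corollary directly from Theorem~\ref{contfrombelow} by translating the dual variables from $\Z_1$ to pairs $(L,D)\in\Lm^1\times\Dl$ via the correspondence of Theorem~\ref{decomposition}.

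Since $\rho$ is continuous from below with respect to pointwise convergence in probability, Theorem~\ref{contfrombelow} gives
\[
\rho(X)=\max_{a\in\Z_1}\left(a(-X)-\gamma(a)\right),\qquad X\in\Ri,
\]
where $\gamma$ is any penalty function in a representation of $\rho$ (necessarily concentrated on $\Z_1$), and the maximum is attained. Next, I would rewrite the objective in terms of $(L,D)$. By the ``only if'' direction of Theorem~\ref{decomposition}, every $a\in\Z_1$ admits a decomposition $(L,D)\in\Lm^1\times\Dl$ with $a_t=-\int_{[0,t]}L_s\,dD_s$; conversely, by the ``if'' direction of the same theorem, every pair $(L,D)\in\Lm^1\times\Dl$ arises this way from some element of $\Z_1$, so the map $(L,D)\mapsto a$ surjects onto $\Z_1$. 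Using the embedding $\B^1\hookrightarrow\A^1$ from Remark~\ref{BsubsetA}, the linear form acts as $a(X)=\E\lp\int_{[0,T]}X_t\,da_t\rp$, hence
\[
a(-X)=\E\lp-\int_{[0,T]}X_t\,da_t\rp=\E\lp\int_{[0,T]}X_tL_t\,dD_t\rp.
\]
Defining the transported penalty $\gamma(L,D):=\gamma\lp\int_{[0,\cdot]}L_s\,dD_s\rp$ as prescribed in the statement, the representation~\eqref{robrepmax1} follows at once; the supremum is attained because the maximum in the $\Z_1$-formulation is attained at some $a^\star\in\Z_1$, and then any decomposition $(L^\star,D^\star)$ of $a^\star$ realizes the supremum in~\eqref{robrepmax1}.

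I do not expect any substantial obstacle here, since the argument is essentially a change of variables built on top of Theorem~\ref{contfrombelow} and Theorem~\ref{decomposition}. The only minor bookkeeping point to check is that the map $(L,D)\mapsto a$ need not be injective in the absence of condition~5) of Theorem~\ref{decomposition}, but this is harmless: both the integrand $\E\lp\int X_tL_t\,dD_t\rp$ and the transported penalty $\gamma(L,D)$ depend only on the underlying process $a$ by construction, so different decompositions of the same $a$ contribute the same value on the right-hand side of \eqref{robrepmax1}.
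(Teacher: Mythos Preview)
Your proposal is correct and matches the paper's own approach: the corollary is stated immediately after the sentence ``Thanks to Theorem~\ref{contfrombelow}, dual representation takes a simpler form\ldots'' with no separate proof, precisely because it is the change of variables you describe, combining Theorem~\ref{contfrombelow} with the bijection between $\Z_1$ and pairs $(L,D)$ from Theorem~\ref{decomposition} (and Remark~\ref{BsubsetA} for the form of $a(-X)$). Your observation that non-uniqueness of the decomposition is harmless, since both the linear form and the transported penalty depend only on $a$, is exactly the right bookkeeping point.
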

The local martingales $L$ and $L'$ in the representation \eqref{eq:repcdk} play the roles of state price deflators,  whereas the predictable non-increasing processes $D_-$ and $D'_-$ define discounting processes for this deflators, cf.\ 2) of Remark~\ref{rem:inter}. In difference to \eqref{robrep} and \eqref{robrepbeta}, representations \eqref{robrepmax1} and \eqref{eq:repcdk} make visible the roles of model  ambiguity, as described by local martingales, and of discounting ambiguity, as described by corresponding non-increasing processes. 
In addition, a risk measure with representation \eqref{eq:repcdk}, that does not reduce to \eqref{robrepmax1}, distinguishes between inaccessible and predictable jumps of the cumulated cash flow. 

Appearance of discounting processes in the representations \eqref{eq:repcdk} and \eqref{robrepmax1} reflects cash subadditivity of the risk measure, whereas cash additivity at time $s>t$ implies that there is no discounting between $t$ and $s$ in all relevant models. This was noted in \cite[Corollary 5.10, Proposition 5.11]{afp9}, and is extended to our present framework by the next proposition.

\begin{prop}\label{propcash}
Let $\rho$ be a convex risk measure for processes with representation \eqref{eq:repcdk}. Then it is cash additive at time $s\in(0,T]$ if and only if
\begin{equation}\label{ca}
D_{s-}=1\;\text{on}\;\, \{L_s>0\},\quad\text{and}\qquad D'_{s-}=1\;\text{on}\;\, \{L'_s>0\}\quad \quad \P\text{-a.s.}
\end{equation}
for all $(L,L', D, D')\in\S^1_+$ such that $\gamma(L,L', D, D')<\infty$.
In this case $\rho$ admits the representation 
\begin{equation}\label{eq:cdkca}
\rho(X)=\sup_{(L,D, L', D')\in\S^1_+}\left(\E\lp-\int_{[s,T]}X_{t-}L_{t-}dD_t-\int_{[s,T]}X_{t}L'_{t}dD'_t\rp-\gamma(L,D, L', D')\right)
\end{equation}
and  $\rho$ is cash additive up to time $s$, i.e., at all times $t\in[0, s]$.\\
In particular, $\rho$ is cash additive if and only if it reduces to a risk measure on $\LT$, i.e., $\rho$ is of the form
\begin{equation}\label{reprrv}
\rho(X)=\sup_{Q\in\M(\P)}\left(\E_Q[-X_T]-\tilde\gamma(Q)\right),
\end{equation}
where $\M(\P)$ denotes the set of all probability measures on $(\Omega, \F_T)$ that are absolutely continuous with respect to $\P$, and 
$\tilde\gamma$ is a penalty function on $\M(\P)$.
\end{prop}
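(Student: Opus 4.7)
The plan is to deduce \eqref{ca} from the dual representation \eqref{eq:repcdk} in two stages: first translating cash additivity at $s$ into a pointwise condition on each admissible dual functional $a$, and then expressing that condition structurally in terms of $(L,D,L',D')$. I would begin by rewriting cash additivity at $s$, via cash invariance, as $\rho(X+m\ind_{[0,s)})=\rho(X)$ for all $X\in\Ri$ and $m\in\real$. Plugging this into \eqref{eq:repcdk} with $X=0$ and letting $|m|\to\infty$ shows the equivalence with $a(\ind_{[0,s)})=0$ for every $(L,D,L',D')\in\S^1_+$ such that $\gamma<\infty$, where $a$ denotes the associated functional as in \eqref{linform2}. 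Using $\ap_0=0$ and $\ao_{0-}=0$, an elementary computation gives $a(\ind_{[0,s)})=\E[\ap_s+\ao_{s-}]$, so by non-negativity of $\ap$ and $\ao$ the condition becomes $\ap_s=0$ and $\ao_{s-}=0$ $\P$-a.s.

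The next step is to translate these integral identities into \eqref{ca}. Since $\ap_s=-\int_{(0,s]}L_{u-}\,dD_u$ is the integral of a non-negative integrand against the non-negative measure $-dD$, the identity $\ap_s=0$ forces $L_{u-}=0$ for $(-dD)$-almost every $u\in(0,s]$. Because non-negative local martingales are absorbed at $0$, one has $\{L_s>0\}\subseteq\{L_{u-}>0\ \forall u\in(0,s]\}$ $\P$-a.s., so on $\{L_s>0\}$ the measure $-dD$ assigns no mass to $(0,s]$; equivalently, $D_{s-}=D_0=1$ on $\{L_s>0\}$. The symmetric computation applied to $\ao_{s-}=-\int_{[0,s)}L'_u\,dD'_u$ yields $D'_{s-}=1$ on $\{L'_s>0\}$. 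For the converse implication, the canonical forms 5) of Theorem~\ref{decomposition} and 5') of Proposition~\ref{prop:pr} (which may be assumed without loss of generality for each decomposition) ensure that the conditions \eqref{ca} make the corresponding integrals collapse to zero.

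Under \eqref{ca} the same vanishing argument shows that, for every $X\in\Ri$, the integrals in \eqref{eq:repcdk} can be restricted to $(s,T]$ and $[s,T]$ respectively, yielding \eqref{eq:cdkca}. Cash additivity at every $t\in[0,s]$ follows because the reduced representation \eqref{eq:cdkca} no longer responds to modifications of $X$ on $[0,s)$, so combining with cash invariance upgrades the invariance to full cash additivity at any $t\le s$. The final assertion is then obtained by specializing to $s=T$: the condition $D_{T-}=1$ on $\{L_T>0\}$, together with $\{D_T>0\}\subseteq\{L_T=0\}$ from Theorem~\ref{decomposition}, forces $D=\ind_{[0,T)}$ on $\{L_T>0\}$, and likewise for $D'$, so all mass of $a$ is concentrated at $T$; Remark~\ref{uimart} then yields that $L$ and $L'$ are uniformly integrable martingales inducing a probability $Q\in\M(\P)$ via $dQ/d\P=L_T+L'_T$, and \eqref{eq:repcdk} collapses to \eqref{reprrv} with $\tilde\gamma(Q):=\inf\{\gamma(L,D,L',D')\,:\,dQ/d\P=L_T+L'_T\}$.

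The main obstacle is the careful handling of possible jumps of $D$ and $D'$ at $s$, and the use of the canonical forms of the decompositions of Theorem~\ref{decomposition} and Proposition~\ref{prop:pr} to turn the integral identities $\ap_s=\ao_{s-}=0$ into the process-level conditions \eqref{ca} and back. A secondary delicate point is the argument that full cash additivity actually produces uniformly integrable $L$ and $L'$ rather than genuine strict local martingales, which is where Remark~\ref{uimart} is crucial in identifying the reduced representation as acting only on $\F_T$-measurable terminal values.
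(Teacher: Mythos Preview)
Your proof is correct and follows essentially the same approach as the paper: both reduce cash additivity at $s$ to the vanishing of the dual pairing against the indicator of $[0,s)$ (you test $\rho(m\ind_{[0,s)})=0$ directly, the paper argues by contradiction via $\rho(m\ind_{[s,T]})>-m$), translate this into \eqref{ca} using the absorption of non-negative local martingales at zero, and then invoke Remark~\ref{uimart} for the fully cash additive case. Your reformulation $\rho(X+m\ind_{[0,s)})=\rho(X)$ and the explicit computation $a(\ind_{[0,s)})=\E[\ap_s+\ao_{s-}]$ are a bit more systematic than the paper's contradiction argument, but the substance is identical.
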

\begin{proof} Since $L$ and $L'$ are local martingales, and $D$ and $D'$ non-increasing processes with $D_{0-}=D'_{0-}=1$, condition \eqref{ca} is equivalent to 
\begin{equation}\label{ca1}
\int_{(0,s)}L_{t-}dD_t=\int_{[0,s)}L'_tdD'_t=0\qquad\P\text{-a.s.}.
\end{equation}
Choose $(L,L', D, D')\in\S^1_+$ such that $\gamma(L,L', D, D')<\infty$, and assume that condition \eqref{ca1} does not hold. Then
\[
\E\lp \int_{[s, T]}L_{t-}dD_t+\int_{[s,T]}L'_tdD'_t\rp>-1,
\]
and we can find $m\in\real$ such that
\[
\E\lp \int_{[s, T]}L_{t-}dD_t+\int_{[s,T]}L'_tdD'_t\rp-\frac{\gamma(L,L', D, D')}{m}>-1.
\]
This implies that
\begin{align*}
 \rho\left(m\ind_{[s,T]}\right)&=m\sup_{(L,D, L', D')\in\S^1_+}\left(\E\lp-\int_{[s,T]}L_{t-}dD_t-\int_{[s,T]}L'_{t}dD'_t\rp-\frac{\gamma(L,D, L', D')}{m}\right)\\&>-m,
\end{align*}
which contradicts the cash additivity property at time $s$. Hence \eqref{ca1} holds, and representation \eqref{eq:repcdk} reduces to \eqref{reprrv}. In particular, if $\rho$ is cash additive at $T$, \eqref{ca1} amounts to $\ao=\ap=0$ on $[0,T)$ for all $(\ap, \ao)\in\Z_1^d$ such that $\gamma(\ap, \ao)<\infty$. Due to Remark~\ref{uimart}, in this case $L+L'$ is a uniformly integrable martingale and defines a probability measure $Q\in\M(\P)$ via $\frac{dQ}{d\P}:=L_T+L'_T$. It follows as in Remark~\ref{uimart}
\[
\E\lp-\int_0^TX_{t-}L_{t-}dD_t-\int_{[0,T]}X_{t}L'_{t}dD'_t\rp=\E_Q\lp X_T\rp
\]
for any $X\in\Ri$, and any $(L,L', D, D')\in\S^1_+$ such that $\gamma(L,L', D, D')<\infty$.  This proves \eqref{reprrv} with 
\[
\tilde\gamma(Q):=\gamma\left(\frac{1}{2}\frac{dQ}{d\P}, \frac{1}{2}\frac{dQ}{d\P}, 1-\delta_{\{T\}}, 1-\delta_{\{T\}}\right),\quad Q\in\M(\P), 
\]
where $\delta_{\{T\}}$ denotes the Dirac measure at $T$.
\end{proof}

\section{Risk measures and BSDEs}\label{BSDE}
This section links risk measures for processes to BSDEs. We consider here risk measures in the dynamic framework. 
For $0\leq t\leq s\leq T$, we define the projection $\pi_{t,s}:\mathcal{R}^{\infty}\to \mathcal{R}^{\infty}$ as 
\[
\pi_{t,s}(X)_r=\ind_{[t,T]}(r)X_{r\wedge s},\quad r\in[0,T],
\]
and we use the notation $\mathcal{R}_{t,s}^{\infty}:=\pi_{t,s}(\mathcal{R}^{\infty})$, and $\mathcal{R}_t^{\infty}:=\pi_{t,T}(\mathcal{R}^{\infty})$. Risk  assessment at time $t$ takes into account the available information, and is described by a  \emph{conditional} convex risk measure for processes $\rho_t$.
\begin{definition}\label{def:cond}
A map $\rt\,:\,\mathcal{R}_t^{\infty}\,\rightarrow\,\Lt$ for $t\in(0,T]$ is called a \emph{conditional convex risk measure for processes} if it satisfies the following properties for all $X,Y\in\mathcal{R}_t^{\infty}$:
\begin{itemize}
\item
Conditional cash invariance: for all $m\in\Lt$,
\[\rho_t(X+m\ind_{[t, T]})=\rho_t(X)-m;\]
\item
Monotonicity: $\rt(X)\ge\rt(Y)$ if $X\le Y$;
\item
Conditional convexity: for all $\lambda\in\Lt$ with $0\le \lambda\le 1$,
\[
\rt(\lambda X+(1-\lambda)Y)\le\lambda\rt(X)+(1-\lambda)\rt(Y);
\]
\item
{Normalization}: $\rt(0)=0$.
\end{itemize}
A sequence $(\rt)\ztu$ is called a \emph{dynamic convex risk measure for processes} if, for each $t$, $\rt\,:\,\mathcal{R}_t^{\infty}\,\rightarrow\,\Lt$ is a conditional convex risk measure for processes.\\
For $X\in\Ri$ we use the notation 
\[
 \rt(X):=\rt(\pi_{t,T}(X)).
\]
A dynamic convex risk measure for processes is called \emph{time consistent}, if 
\[
 \rho_t(X)=\rho_t(X\ind_{[t,s)}-\rho_{s}\ind_{[s,T]}(X))
\]
for all $X\in\Ri$, and all $t\in[0,T]$, $s\in[t,T]$.
\end{definition}
\begin{remark}\label{rem:condca}
Also Definition~\ref{def:ca} of cash subadditivity can be extended to the conditional case in a straightforward way. By the same argument as in Proposition~\ref{prop:ca} every conditional convex risk measure for processes is cash subadditive.
\end{remark}

From now on we shell assume that the time horizon $T$ is finite, and the filtration $(\F_t)_{t\in[0,T]}$ is the augmentation of the filtration generated by a $d$-dimensional Brownian motion $(W_t)_{t\in[0,T]}$. In this context, it is well known 
that a solution to a BSDE
\begin{equation}\label{bsde}
Y_t=-X_T+\int_t^T g(s,Y_s, Z_s)ds-\int_t^TZ_sdW_s,\qquad t\in[0,T],
\end{equation}
for a Lipschitz or quadratic growth driver $g=g(s,y,z)$ defines a dynamic convex risk measure for random variables, if the driver is convex in $z$ and does not depend on $y$;  cf.\ \cite{peng04}, \cite{ro6}, 
\cite{bek8}, and the references therein. The latter requirement is due to the strong notion of cash additivity in the framework of random variables.
As pointed out in \cite{er08}, a solution to a BSDE \eqref{bsde} becomes  cash subadditive, if the driver is monotone in $y$ and convex in $(y,z)$. 

In the sequel we want to modify \eqref{bsde} in a way that  it would define a dynamic convex risk measure for processes. As we have seen in Proposition~\ref{prop:ca}, every risk measure for processes is cash subadditive; and this suggests to consider BSDEs with monotone convex drivers as in \cite{er08}. However, in our framework the BSDE should depend on the whole path of the process $X$ rather then just  on its terminal value $X_T$. So for a fixed $X$ in $\Ri$ we will consider a BSDE of the following form:
\begin{equation}\label{qbsde} 
Y_t=-X_T+\int_t^T g(s,Y_s+X_s,Z_s)ds-\int_t^T Z_s dW_s,\qquad t\in[0,T].
\end{equation}
Another example of a BSDE depending on a process is given by reflected BSDE, where the solution $Y$ of \eqref{bsde} is required to stay above an ``obstacle'' process $X$, cf.\ \cite{ekppq97}. Thus we may also add a reflection condition to the BSDE \eqref{qbsde}, and consider the RBSDE
\begin{align}\label{bsdegen}
\nonumber&Y_t=-X_T+\int_t^T g(s,Y_s+X_s,Z_s)ds-\int_t^T Z_s dW_s+K_T-K_t,\quad t\in[0,T],  \\
&\text{with }\\
\nonumber &Y_t \ge -X_t \quad \forall t \in [0,T],\quad \mbox{ and }\quad \int_0^T (Y_{s-}+X_{s-}) dK_s=0.
\end{align}
In the sequel we will make the following assumptions on the driver $g: \Omega \times [0,T]\times \real \times \real^d \to \real$:\\\\
\textbf{(H1)[Lipschitz]} For any $(y,z)\in \real^{1+d}$, the stochastic process $(\omega,t)\mapsto g(\omega,t,y,z)$ is progressively measurable. In addition, there exists $C_{Lip}>0$, such that
$$
|g(\omega,t,y_1,z_1)-g(\omega,t,y_2,z_2)|\leq C_{Lip} (|y_1-y_2|+|z_1-z_2|) \quad \forall (y_1,y_2,z_1,z_2)\in \real^{2+2d} \;\; \P\otimes dt\text{-a.e.}. 
$$
\textbf{(H1')[Quadratic growth]} For any $(y,z)\in \real^{1+d}$, the stochastic process $(\omega,t)\mapsto g(\omega,t,y,z)$ is progressively measurable. In addition, there exists $C>0$, such that
$$ 
|g(\omega,t,y,z)|\leq C (1+ |y|+|z|^2) \quad \forall (y,z)\in\real^{1+d} \;\; \P\otimes dt\text{-a.e.}
$$
\textbf{(H2)[Convexity]} $g$ is 
convex in $(y,z)$, i.e., $\forall (y_1,y_2,z_1,z_2,\lambda) \in \real^{2+2d}\times [0,1]$,
$$ g(\omega,t,\lambda y_1 + (1-\lambda) y_2,\lambda z_1 + (1-\lambda) z_2) \leq \lambda g(\omega,t,y_1,z_1) +(1-\lambda) g(\omega,t,y_2,z_2) \quad \P\otimes dt\text{-a.e.}.$$
\textbf{(H3)[Monotonicity]} $g$ non-increasing in $y$.\\\\
\textbf{(H4)[Normalization]} $g(\omega, t,0,0)=0\quad
\;\,\P\otimes dt$-a.s..\\\\
\noindent
Before recalling existence result for the equations under interest, we point out that assumptions (H1) and (H1') from one hand, and assumptions (H2)-(H4) on the other hand are not of the same nature. Indeed, as it will be seen in the sequel, (H1) 
(resp.\ (H1')) guarantees existence and uniqueness of a (maximal) solution, whereas assumptions (H2)-(H4) ensure that the solution 
satisfies the basic axioms of a risk measure for processes. 
\begin{remark}\label{rem:h}
In BSDEs \eqref{qbsde} and \eqref{bsdegen}  a given process $X$ shifts the driver $g$. However, for each $X\in\Ri$ we can define a new driver $h^X:\Omega \times [0,T]\times \real \times \real^d \to \real$ as 
\[
h^X(\omega,t, y,z):=g(\omega, t,y+X_t(\omega),z).
\] 
By definition, $h^X$ directly inherits properties (H1)-(H3) (or (H1')-(H3)) from $g$ for each $X\in\Ri$, and the BSDEs  \eqref{qbsde} and \eqref{bsdegen} can be written in the more conventional form in terms of the driver $h^X$.
\end{remark}

\begin{prop}\label{prop:rmbsde}
Under assumption (H1) (resp.\ (H1')), there exists for each  $X\in\Ri$ a unique triple $(Y,Z,K)$ in $\S^2 \times \H^2_d \times \S^2_{\uparrow}$, that is a solution of the RBSDE \eqref{bsdegen} (resp.\ a unique couple $(Y,Z)$ in $\S^2 \times \H^2_d$, that is a maximal solution of the BSDE \eqref{qbsde}).  Here  
$$ \S^2:=\left\{X:=(X_t)_{t\in [0,T]}\mk X\; \textrm{progressively measurable, \cde}, \; \E\left[\sup_{t\in [0,T]} |X_t|^2\right]<\infty \right\},  $$  
$$ \H^2_d:=\left\{X:=(X_t)_{t\in [0,T]}\mk X\; \textrm{progressively measurable, $d$-dim.}, \; \E\left[\int_0^T |X_t|^2 dt \right]<\infty \right\}, $$
and $\S^2_{\uparrow}$ denotes the subset of elements in $\S^2$ which are non-decreasing.
\end{prop}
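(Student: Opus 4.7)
The plan is to reduce both equations to a standard form by exploiting Remark~\ref{rem:h}, and then invoke the classical existence and uniqueness theory for Lipschitz and quadratic BSDEs, respectively. Fix $X\in\Ri$, and set $h^X(\omega,t,y,z):=g(\omega,t,y+X_t(\omega),z)$ as in that remark. Then the RBSDE \eqref{bsdegen} rewrites in the ``conventional'' form
\[
Y_t=-X_T+\int_t^T h^X(s,Y_s,Z_s)\,ds-\int_t^T Z_s\,dW_s+K_T-K_t,
\]
with obstacle $-X$ and terminal condition $\xi:=-X_T$, and similarly for \eqref{qbsde}. Because $X\in\Ri$ is essentially bounded, the terminal condition $\xi$ lies in $L^\infty\subset L^2$, and the obstacle $-X$ is a \cde process in $\S^2$, with $-X_T\ge -X_T$ holding trivially.

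For the Lipschitz case (H1), the driver $h^X$ is again Lipschitz in $(y,z)$ with the same constant $C_{Lip}$ as $g$, and the progressive measurability of $(\omega,t)\mapsto h^X(\omega,t,y,z)$ follows from progressive measurability of $X$ and of $g$. Moreover $h^X(\cdot,\cdot,0,0)$ is $\P\otimes dt$-square-integrable since $|h^X(\cdot,\cdot,0,0)|\leq C_{Lip}\|X\|_{\Ri}$ by (H4). Hence all hypotheses of the existence and uniqueness theorem for reflected BSDEs with a Lipschitz driver and an $\S^2$-obstacle (Theorem 5.2 of \cite{ekppq97}) are satisfied, and one obtains a unique triple $(Y,Z,K)\in\S^2\times\H^2_d\times\S^2_\uparrow$ solving \eqref{bsdegen}.

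For the quadratic case (H1'), observe that $h^X$ satisfies
\[
|h^X(\omega,t,y,z)|\leq C\bigl(1+\|X\|_{\Ri}+|y|+|z|^2\bigr),
\]
so $h^X$ has the same structure as $g$, up to an additive bounded perturbation. With $\xi=-X_T\in L^\infty$, Kobylanski's theorem provides a bounded solution $(Y,Z)\in\S^\infty\times\H^2_d\subset\S^2\times\H^2_d$, and a maximal such solution is obtained by monotone approximation from above by solutions to Lipschitz BSDEs; uniqueness of the maximal solution follows by definition. This delivers the desired maximal solution to \eqref{qbsde}.

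The only mildly delicate point is to check that the $X$-dependent shift does not destroy the hypotheses used in the black-box results: in the Lipschitz case this is immediate, while in the quadratic case boundedness of $X$ is precisely what is needed to preserve the quadratic growth structure and keep the terminal data bounded. Note that assumptions (H2)--(H4) play no role here; they will only be needed in subsequent results to verify the axioms of a convex risk measure.
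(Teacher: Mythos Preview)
Your approach is exactly that of the paper: use Remark~\ref{rem:h} to rewrite the equations with the shifted driver $h^X$, check that the standing hypotheses are preserved, and then invoke classical existence/uniqueness results. For the quadratic BSDE \eqref{qbsde} your appeal to Kobylanski coincides with the paper's reference.

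There is, however, one technical slip in the reflected case. The obstacle $-X$ is only \cde, since $X\in\Ri$, and the Skorohod condition in \eqref{bsdegen} is stated with left limits $Y_{s-}+X_{s-}$ precisely for that reason. Theorem~5.2 of \cite{ekppq97} treats \emph{continuous} obstacles and therefore does not cover the present setting. The paper instead appeals to \cite{h2, LepXu5, px5}, which handle reflected BSDEs with r.c.l.l.\ barriers; you should cite one of those in place of \cite{ekppq97}.

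A minor inconsistency: you invoke (H4) to bound $h^X(\cdot,\cdot,0,0)$, yet conclude by saying that (H2)--(H4) play no role. Under (H1) alone nothing prevents $g(\cdot,\cdot,0,0)$ from failing to lie in $\H^2_1$, so some integrability input (here supplied by (H4) together with boundedness of $X$) is indeed needed to feed the driver into the classical Lipschitz RBSDE theory. Either keep the use of (H4) and drop the final disclaimer, or state explicitly the square-integrability of $g(\cdot,\cdot,0,0)$ as an implicit part of the standing assumptions.
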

\begin{proof} Using Remark~\ref{rem:h}, existence and uniqueness  follow  from classical results such as \cite{h2, LepXu5, px5}) for the RBSDE \eqref{bsdegen} under (H1), and \cite{Kobylanski} for the BSDE \eqref{qbsde} under (H1').
\end{proof}


\begin{remark}\label{rk:indfu}
To stress the dependence on a given process $X\in\Ri$, we will sometimes denote the BSDEs \eqref{qbsde} and \eqref{bsdegen} by  BSDE$(X)$, and  the solution $Y$ of the BSDE$(X)$ at time $t$ by $Y_t(X)$. Note that by uniqueness of the (maximal) solution on $[t,T]$, we have $Y_t(X)=Y_t(\pi_{t,T}(X))$, which is in line with our convention $\rho_t(X) = \rho_t(\pi_{t,T}(X))$.

For $0\le s\le t\le T$, we will also  write $Y_{s,t}(X)$ to denote the solution of BSDE(X) on $[0,t]$ at time $s$. Accordingly, $Y_{s,t}(X)= Y_{s,t}(\pi_{s,t}(X))$, and $Y_t=Y_{t,T}$. 
\end{remark}
The next proposition identifies the (maximal) solution $Y=Y(X)$ of \eqref{qbsde} and \eqref{bsdegen} as a dynamic risk measure for processes.
\begin{prop}\label{prop:rm}
Under the assumptions (H1)-(H4) (resp.\ (H1')-(H4)), the (maximal) solution $(Y_{t})_{t\in[0,T]}$ of the RBSDE \eqref{bsdegen} (resp.\ of the BSDE \eqref{qbsde}) defines a time consistent dynamic convex risk measure for processes via
\[
\rho_t(X):=Y_t(X),\quad t\in[0,T],\quad X\in\Ri . 
\]
\end{prop}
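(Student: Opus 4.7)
The approach is to verify each of the axioms of Definition~\ref{def:cond} together with time consistency, treating the BSDE \eqref{qbsde} and RBSDE \eqref{bsdegen} cases in parallel. The main tools are Proposition~\ref{prop:rmbsde} (existence/uniqueness, or maximality under (H1')), the standard comparison theorem for (reflected) BSDEs in the Lipschitz setting, its Kobylanski-type counterpart for maximal solutions in the quadratic setting, and the flow property of (R)BSDEs. Throughout I would exploit Remark~\ref{rk:indfu} to restrict all equations to $[t,T]$.

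First I would dispose of \emph{normalization} by noting that under (H4) the triple $(0,0,0)$ (resp. $(0,0)$) solves the (R)BSDE for $X\equiv0$, so uniqueness/maximality gives $Y_t(0)=0$. For \emph{cash invariance}, fix $m\in\Lt$ and set $\tilde Y_u:=Y_u(X+m\ind_{[t,T]})+m$, $\tilde Z:=Z$, $\tilde K:=K$ on $[t,T]$. A direct substitution shows that $(\tilde Y,\tilde Z,\tilde K)$ solves the (R)BSDE for $X$ (the reflection constraint $\tilde Y\ge -X$ and the Skorohod condition transfer automatically), so uniqueness/maximality yields $Y_t(X+m\ind_{[t,T]})=Y_t(X)-m$. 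For \emph{monotonicity}, if $X\le X'$ then $-X_T\ge -X'_T$, the obstacles satisfy $-X\ge -X'$, and by (H3) the drivers satisfy $g(u,y+X_u,z)\ge g(u,y+X'_u,z)$; the comparison theorem then gives $Y_t(X)\ge Y_t(X')$.

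For \emph{conditional convexity}, let $\lambda\in\Lt$ with $0\le\lambda\le 1$ and define on $[t,T]$
\[
\tilde Y:=\lambda Y(X^1)+(1-\lambda)Y(X^2),\quad \tilde Z:=\lambda Z(X^1)+(1-\lambda)Z(X^2),\quad \tilde K:=\lambda K(X^1)+(1-\lambda)K(X^2).
\]
Since $\lambda$ is $\F_t$-measurable it passes through the stochastic integral. Using (H2), together with the identity $\tilde Y+\lambda X^1+(1-\lambda)X^2=\lambda(Y(X^1)+X^1)+(1-\lambda)(Y(X^2)+X^2)$, the triple $(\tilde Y,\tilde Z,\tilde K)$ is a supersolution of the (R)BSDE driven by $X:=\lambda X^1+(1-\lambda)X^2$ (the reflection constraint $\tilde Y\ge -X$ is preserved by convex combination). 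The comparison principle then delivers $\tilde Y_t\ge Y_t(X)$, which is the required inequality. For \emph{time consistency}, fix $t\le s\le T$ and put $X':=X\ind_{[t,s)}-Y_s(X)\ind_{[s,T]}$. On $[s,T]$ the (R)BSDE for $X'$ has terminal value $Y_s(X)$, obstacle $Y_s(X)$, and, after shifting $\hat Y:=Y'-Y_s(X)$, reduces by (H4) to an (R)BSDE whose unique/maximal solution is $(0,0,0)$, so $Y'\equiv Y_s(X)$ on $[s,T]$ and $K'$ is constant there. On $[t,s]$ the (R)BSDE for $X'$ then coincides with the (R)BSDE for $X$ with terminal value $Y_s(X)$ at $s$, so by the flow property $Y'_r=Y_r(X)$ on $[t,s]$; evaluating at $r=t$ gives the desired identity.

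The main obstacle is the reflection term $K$: one must verify carefully that convex combinations produce a bona fide supersolution in the sense required by the RBSDE comparison theorem (the Skorohod condition need only be relaxed to an inequality), and that in the time-consistency step the solution on $[s,T]$ with an everywhere-active obstacle is genuinely pinned down by uniqueness so that the concatenation back to $[t,s]$ is unambiguous. A secondary difficulty is that under (H1') we work with the maximal solution only, so each comparison or uniqueness invocation must be justified in the quadratic framework of \cite{Kobylanski}; this is routine but requires checking that the supersolution produced in the convexity step remains within the class where the quadratic comparison applies.
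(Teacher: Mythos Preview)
Your proposal is correct and follows essentially the same route as the paper's proof: normalization via $(0,0,0)$ and (H4), cash invariance by direct substitution and uniqueness, monotonicity by the comparison principle for (R)BSDEs, convexity by showing the convex combination is a supersolution dominating the obstacle and invoking the ``smallest supersolution'' characterization (the paper cites \cite[Theorem~2.1]{px5} for the reflected case, which is exactly the comparison you need), and time consistency via the flow property. The paper organizes the time-consistency argument slightly differently---it first isolates the intermediate identity $Y_{t,T}(X)=Y_{t,s}(X\ind_{[t,s)}-Y_{s,T}(X)\ind_{[s]})$ and then plugs in---whereas you argue directly that the solution for $X'$ is constant on $[s,T]$ and then patch on $[t,s]$; these are equivalent.
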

\begin{proof} 
We only deal with the reflected case here, and simply indicate the main arguments for the non-reflected quadratic growth case.
\\\noindent
\textit{(i)} To prove convexity, let $X^1,X^2\in\Ri$ and $\lambda\in[0,1]$;  we have to show that
$$ Y(\lambda X^1 + (1-\lambda) X^2) \leq \lambda Y(X^1) + (1-\lambda) Y(X^2).$$  
To this end  we denote by $(Y^i,Z^i, K^i)$ the solutions of the BSDE \eqref{bsdegen} for $X=X^i$ ($i=1,2$), and set 
$\tilde{X}:=\lambda X^1 + (1-\lambda) X^2$, $\tilde{Y}:=\lambda Y(X^1) + (1-\lambda) Y(X^2)$, $\tZ:=\lambda Z^1+(1-\lambda) Z^2$, and $\tK:=\lambda K^1+(1-\lambda) K^2$. Convexity of $g$ in $(y,z)$ implies
$$ 
\lambda g(r,Y_r^1+X_r^1,Z_r^1)+(1-\lambda) g(r,Y_r^2+X_r^2,Z_r^2) \geq g(r, \tY_r+ \tilde{X}_r, \tZ_r), \quad \P\text{-a.s.}.  
$$
Thus we have for any $0\le t_1\leq t_2\le T$ 
\begin{align*}
\tY_{{t_1}}&=\tY_{{t_2}} + \int_{{t_1}}^{{t_2}} \left(\lambda g(r,Y_r^1+X_r^1,Z_r^1)+(1-\lambda) g(r,Y_r^2+X_r^2,Z_r^2)\right)dr -\int_{{t_1}}^{{t_2}} \tZ_r dW_r +\int_{{t_1}}^{{t_2}} d\tK_r\\
&\ge\tY_{{t_2}} + \int_{{t_1}}^{{t_2}} g(r, \tY_r+ \tilde{X}_r, \tZ_r) dr -\int_{{t_1}}^{{t_2}} \tZ_r dW_r.
\end{align*}
Hence $\tY$ is a supersolution of the classical BSDE with driver $g$ and terminal condition $\tX_T$, and $\tY\ge\tX$. As it is proved in \cite[Theorem 2.1]{px5}, $Y(\lambda X^1 +(1-\lambda) X^2)$ is the smallest supersolution of the (classical) BSDE with driver $g$ and terminal condition $\tilde{X}_T$ which dominates $\tilde{X}$. Thus 
$$ \tY_t \geq Y_t(\lambda X^1 + (1-\lambda) X^2) \qquad \forall t\in [0,T] \quad \P\text{-a.s.}. $$   
In the non-reflected case, comparison theorem for maximal solutions of BSDEs (c.f., e.g., \cite[Theorem 7.1]{er08}) provides the result.\\\noindent
\textit{(ii)} To prove (inverse) monotonicity, note that for any $X^1, X^2\in\Ri$ such that $X^1 \leq X^2$ we have  $Y_T(X^1) \geq Y_T(X^2)$. Moreover, since $g$ is non-increasing in $y$, we have $h^{X^1}(t,y,z) \geq h^{X^2}(t,y,z)$ for all $(t,y,z)$.  Thus monotonicity 
follows form the classical comparison principle for (R)BSDEs, cf., e.g.,  \cite[Theorem 7.1]{er08} 
and  \cite[Theorem 1.5]{h2}. 
\\\noindent
\textit{(iii)}
We prove cash additivity at time $t$, i.e.,
\[
 Y_t(X+m\ind_{[t,T]})= Y_t(X)-m\qquad\forall m\in\Lt.
\]
Let $(\tY,\tZ,\tK)$ denote the solution of RBSDE($X+m\ind_{[t,T]}$). By definition, it holds that
$$ \tY_s + m = -X_T +\int_s^T g(r,\tY_r+X_r+m,\tZ_r) dr -\int_s^T \tZ_r dW_r+\int_s^T d\tK_s, \quad s\in [t,T]. $$
Thus 
$(\tY+m,\tZ,\tK)$ is the solution of \eqref{bsdegen} on $[t,T]$, and by uniqueness $\tY_t+m=Y_t(X)$.\\\noindent
\textit{(iv)} Due to the requirement $g(t,0,0)=0$ $\P\otimes dt$-a.s., $(0,0,0)$ is the unique solution to the BSDE$(0)$;  this proves normalization. \\\noindent
\textit{(v)} We prove time consistency: 
\[
 Y_{t}(X\ind_{[t,s)}-Y_{s}(X) \ind_{[s,T]}(X))=Y_{t}(X)\qquad\forall t\in[0,T], s\in[t,T].
\]
To this end, we first show  that for $s\in[t,T]$
\begin{equation}
\label{eq:cons}
Y_{t,T}(X)=Y_{t,s}(X \ind_{[t,s)} - Y_{s,T}(X) \ind_{[s]})
\end{equation}
Indeed, if $(Y,Z,K)$ denotes the solution of RBSDE($X$),  we have 
\begin{align*} 
Y_{t,T}(X)&=-X_T + \int_s^T g(r,Y_r+X_r,Z_r) dr -\int_s^T Z_r dW_r + \int_s^T dK_r \\
&\qquad\quad\;\;+ \int_t^s g(r,Y_r+X_r,Z_r) dr -\int_t^s Z_r dW_r + \int_t^s dK_r\\
&=Y_{s,T}(X) + \int_t^s g(r,Y_r+X_r,Z_r) dr -\int_t^s Z_r dW_r + \int_t^s dK_r\\
&=Y_{t,s}(X \ind_{[t,s)} - Y_{s,T}(X) \ind_{[s]})
\end{align*}
due to uniqueness of the solution. Now let $(\tY,\tZ,\tK)$ denote the solution of the RBSDE$(X\ind_{[0,s)}-Y_{s,T}(X)\ind_{[s,T]})$. Then we have
\begin{align} 
\label{line1}\tY_t= & Y_{s,T}(X) + \int_t^s g(r,\tY_r+X_r,\tZ_r) dr -\int_t^s \tZ_r dW_r +\int_t^s d\tK_r\\
\label{line2}&-Y_{s,T}(X)+ Y_{s,T}(X)+ \int_s^T g(r,\tY_r-Y_{s,T}(X),\tZ_r) dr -\int_s^T \tZ_r dW_r +\int_s^T d\tK_r.
\end{align}
Note further that \eqref{line1} equals to $Y_{t, T}(X)$ by \eqref{eq:cons}, and \eqref{line2} is $0$, since
\begin{align*}
Y_{s,T}(X)+ \int_s^T g(r,\tY_r-Y_{s,T}(X),\tZ_r) dr -\int_s^T \tZ_r dW_r +\int_s^T d\tK_r&=Y_s(-Y_{s,T}(X)\ind_{[s,T]})\\&=Y_{s,T}(X)
\end{align*}
due to cash invariance and normalization as proved in (iii) and (iv).
\end{proof}
In the following we will provide dual representations for the risk measures associated to the BSDEs \eqref{qbsde} and \eqref{bsdegen}.
To this end we define the Legendre-Fenchel conjugate $g^*:\Omega\times [0,T]\times \real\times\real^d\to \real\cup\{\infty\}$ of the convex generator $g$ as in \cite{er08}:
\begin{align*}
g^*(\omega, t,\beta,\mu)&=\sup_{(y,z)\in\real\times\real^d}\lk -\beta y-\mu\cdot z-g(\omega,t,y,z)\rk\\
&=\sup_{(y,z)\in\qu\times\qu^d}\lk -\beta y-\mu\cdot z-g(\omega,t,y,z)\rk.
\end{align*}
Moreover, we introduce the sets 
\[
 \Ra:=\lk \beta=(\beta_t)_{t\in[0,T]}\mk \beta\;\text{progressively measurable,}\; 0\le\beta\le C\;\,\P\otimes dt\text{-a.s.}\rk,
\]
and
\[
 {\rm BMO}(\P):=\lk \mu=(\mu_t)_{t\in[0,T]}\mk  \mu\in\H^2_d,\;\exists B:\sup_{\tau \, \textrm{stopping time}} \E\lp \int_\tau^T|\mu_s|^2ds|\F_\tau\rp\le B\;  \P\text{-a.s.}\rk.
\]
\begin{lemma}\label{lem:gstar}
Assume that  $g$ satisfies conditions (H1)-(H4) (resp. (H1')-(H4)), and let $(Y, Z, K)$ (resp.\ $(Y,Z)$) be a solution to the BSDE \eqref{bsdegen} (resp.\ to \eqref{qbsde}) for a process $X\in\Ri$. Then  
\begin{equation}\label{gstar}
g(t,Y_t+X_t, Z_t)=\max_{(\beta, \mu)\in\Ra\times{\rm BMO}(\P)}\lk -\beta_t(Y_t+X_t)-\mu_t\cdot Z_t-g^*(t,\beta_t,\mu_t)\rk\quad\P\otimes dt\text{-a.s.},
 \end{equation}
where the maximum is attained by some $(\bar\beta, \bar\mu)\in\Ra\times{\rm BMO}(\P)$. 
\end{lemma}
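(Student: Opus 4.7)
The plan is to obtain the representation pointwise in $(\omega,t)$ from the Fenchel--Moreau theorem, use (H3) together with the growth assumption to restrict the supremum to the desired domain, and then extract a progressively measurable maximizer lying in $\Ra\times{\rm BMO}(\P)$.

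For each fixed $(\omega,t)$, the function $(y,z)\mapsto g(\omega,t,y,z)$ is convex by (H2) and continuous by (H1) (resp.\ (H1')), hence proper and lower semi-continuous. Fenchel--Moreau duality yields
\begin{equation*}
g(\omega,t,y,z)=\sup_{(\beta,\mu)\in\real\times\real^d}\lk -\beta y-\mu\cdot z-g^*(\omega,t,\beta,\mu)\rk.
\end{equation*}
Monotonicity (H3) forces $g^*(\omega,t,\beta,\mu)=+\infty$ whenever $\beta<0$, since $-\beta y-g(\omega,t,y,z)\to+\infty$ as $y\to+\infty$ on account of $g$ being non-increasing in $y$. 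Likewise, under (H1) the Lipschitz bound implies $g^*=+\infty$ for $\beta>C_{Lip}$, and under (H1') the sublinear growth of $g$ in $y$ yields the same conclusion for $\beta>C$. Hence the effective range of $\beta$ is $[0,C]$, and by (H4) together with $(y,z)=(0,0)$ we get $g^*\ge 0$.

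The Fenchel--Moreau equality is attained at any element $(\bar\beta(\omega,t),\bar\mu(\omega,t))$ of the nonempty closed convex set $-\partial_{(y,z)}g(\omega,t,Y_t(\omega)+X_t(\omega),Z_t(\omega))$. To ensure that $(\bar\beta,\bar\mu)$ is progressively measurable, I would invoke a Kuratowski--Ryll-Nardzewski measurable selection: since $g$ is a Carathéodory function in $(y,z)$ and $(Y,Z)$ is progressive, the subdifferential correspondence is closed-convex valued and progressively measurable. Step one then gives $\bar\beta\in[0,C]$, so $\bar\beta\in\Ra$.

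The main obstacle is showing $\bar\mu\in{\rm BMO}(\P)$. Under (H1) this is immediate since Lipschitz continuity of $g$ in $z$ uniformly bounds subgradients by $C_{Lip}$, so $|\bar\mu|\le C_{Lip}$. Under (H1') the argument is more delicate: the quadratic upper bound on $g$ in $z$ combined with convexity yields the standard subgradient growth estimate $|\bar\mu_t|\le C'(1+|Z_t|)$, and the classical BMO estimate for quadratic BSDEs with bounded terminal condition (here $-X_T\in L^\infty$) ensures $Z\in{\rm BMO}(\P)$; I would quote this rather than re-derive it from Kobylanski's a priori bounds. Together these give $\E[\int_\tau^T|\bar\mu_s|^2 ds|\F_\tau]\le 2C'^2 T+2C'^2\E[\int_\tau^T|Z_s|^2 ds|\F_\tau]$, uniformly bounded, so $\bar\mu\in{\rm BMO}(\P)$.
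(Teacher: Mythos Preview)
Your proposal is correct and follows essentially the same approach as the paper: pointwise Fenchel--Moreau duality, restriction of the dual domain via (H3) and the growth bound, a measurable selection of the maximizer, and then the BMO bound for $\bar\mu$ via a subgradient growth estimate together with $Z\in{\rm BMO}(\P)$ (the latter being the content of Proposition~\ref{prop:appendix} in the appendix, which the paper invokes and you quote as ``classical''). The only cosmetic difference is that you treat the Lipschitz and quadratic cases separately, whereas the paper observes that (H1)$+$(H4) implies (H1') and argues once; also note two small slips of wording---continuity under (H1') comes from convexity (H2), not from the growth bound itself, and the growth of $g$ in $y$ under (H1') is linear, not sublinear---but neither affects the argument.
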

\begin{proof} Note first that (H1) together with (H4) implies (H1'), so it is sufficient to argue for $g$ satisfying quadratic growth condition (H1'). 
By definition of $g^*$, we have ``$\ge$'' in \eqref{gstar}, and  standard convex duality  and measurable selection results (cf.\ \cite[Lemma 7.5]{bek8}) imply 
\begin{equation*}
g(t,Y_t+X_t, Z_t)=-\bar\beta_t(Y_t+X_t)-\bar\mu_t\cdot Z_t-g^*(t,\bar\beta_t,\bar\mu_t)\quad\P\otimes dt\text{-a.s.}
 \end{equation*}
for some progressively measurable processes $\bar\beta$ and $\bar\mu$. We have to show that $0\le\bar\beta\le C$ and $\bar\mu\in$BMO$(\P)$.  The first estimate follows from \cite[Lemma 7.4]{er08}, since $g^*(t,\beta,\mu)=\infty$ for $\beta\notin[0,C]$. Moreover, the same argument as in \cite[Lemma 7.4]{er08} implies that there exists $B>0$ such that
\[
 |\bar\mu_t^2|\le B\left(1+|Y_t|+|X_t|+|Z_t|^2\right)\quad\P\otimes dt\text{-a.s.}.
\]
As proved in the appendix, $Y$ is bounded, and $Z\in$BMO$(\P)$  for each $X\in\Ri$ both in \eqref{bsdegen} and in \eqref{qbsde}. This proves that $\bar\mu\in$BMO$(\P)$.
\end{proof}
By classical results of Kazamaki \cite[Section 3.3]{Kazamaki}, cf.\ also \cite[Theorem 7.2]{bek8}, every $\mu\in{\rm BMO}(\P)$ defines a probability measure $Q^\mu\approx\P$ on $\F_T$ via the density  process
\[
 \Gamma^\mu_t=\exp\left(\int_0^t\mu_sdW_s-\frac{1}{2}\int_0^t|\mu_s|^2ds\right),\qquad t\in[0,T].
\]
Moreover, $W^\mu:=W-\int_0^\cdot\mu_sds$ is a $Q^\mu$-Brownian motion, and $\int_0^\cdot Z_sdW^\mu_s$ is a BMO$(Q^\mu)$-martingale for any $Z\in$BMO$(\P)$.
           
Probability measures $Q^\mu$ will describe models appearing in the dual representations of the risk measures associated to  BSDEs \eqref{bsdegen} and \eqref{qbsde}. We also define for each $t\in[0,T]$ a family of discounting process 
\begin{align*}
\nonumber \D_t:=\Big\{(D_{t,s})_{s\in[t,T]}\mk (D_{t,s})\;\text{adapted,}& \text{ non-increasing, right-continuous},\\&\; D_{t,t-}:=1, D_{t,T}=0\;\P\text{-a.s.}\Big\}.
\end{align*}
Every $D\in\D_0$ and a density process $\Gamma^\mu$ as above define a normalized optional measure $\nu$ 
as in Corollary~\ref{cor:dec} and \eqref{eq:measure} via
\[
 \E_{\nu}\lp X\rp=\E\lp -\int_{[0,T]}X_s\Gamma^\mu_sdD_{0,s}\rp=\E_{Q^\mu}\lp- \int_{[0,T]}X_sdD_{0,s}\rp,\quad X\in\Ri.
\]
If we define $\bar\F_t:=\sigma\left(\pi_{0,t}(X)\mk X\in\Ri\right)$, and $(D_{t,s})\in\D_t$ via $D_{t,s}:=\frac{D_{0,s}}{D_{0,t-}}$, $s\in[t,T]$, $\bar\F_t$-conditional expectation with respect to $\nu$ can be written as 
\[
\E_{\nu}\lp X|\bar\F_t\rp=X\ind_{[0,t)}+\E_{Q^\mu}\lp- \int_{[t,T]}X_sdD_{t,s}|\F_t\rp\ind_{[t,T]},\quad X\in\Ri.
\]
For $X\in\Ri_t$, this conditional expectation reduces to $\E_{Q^\mu}[- \int_{[t,T]}X_sdD_{t,s}|\F_t]$,  and it will appear in the conditional dual representation of the dynamic risk measures  induced by BSDEs \eqref{qbsde} and \eqref{bsdegen}. To be more precise,  we will show that the risk measures  induced by BSDEs \eqref{qbsde} and \eqref{bsdegen} are of the form 
\begin{equation}\label{eq:rmbsde}
\rho_t(X)=\es_{(\mu,D)\in{\rm BMO}(\P)\times\D_t}\left(E_{Q^\mu}\lp\int_{[t,T]}X_sdD_{t,s}|\F_t\rp-\gamma_t(\mu,D)\right),\quad  X\in\Ri,
\end{equation}  
where $\gamma_t(\mu,D)$ is a penalty function on ${\rm BMO}(\P)\times\D_t$, and $t\in[0,T]$. 
This representation can be seen as a conditional version of \eqref{robrepmax1}, where the penalty function is concentrated on the local martingales of the form $\Gamma^\mu$, i.e., on probability measures $Q^\mu$, that are equivalent to the Wiener measure $\P$.  

In order to prove \eqref{eq:rmbsde}, let $(Y,Z,K)$ be the (maximal) solution of the BSDE$(X)$, fix $\mu\in{\rm BMO}(\P)$ and $D\in\D_t$. Applying integration by parts, taking conditional expectation with respect to  $Q^{\mu}$ on both sides, and using that $\int_0^\cdot Z_sdW^\mu_s$ is a BMO$(Q^\mu)$-martingale, we obtain
\begin{align}
 \nonumber Y_t =\, & Y_tD_{t,t-} = -Y_t\Delta D_{t,t}+Y_tD_{t,t}\\
\nonumber =\, & \E_{Q^\mu}\lp- Y_t\Delta D_{t,t}+ D_{t,T}Y_T-\int_t^T Y_sdD_{t,s}-\int_t^T D_{t,s-}dY_s\mk\F_t\rp\\\label{l1}
=\, & \E_{Q^\mu}\lp \int_{[t,T]} X_sdD_{t,s}\mk\F_t\rp\\\label{l2}
& +\E_{Q^\mu}\lp\int_t^T D_{t,s-}\left(g(s,Y_s+X_s,Z_s)+\mu_s\cdot Z_s\right)ds\mk\F_t\rp\\\label{l3}
 & +\E_{Q^\mu}\lp-\int_{[t,T]} (Y_s+X_s)dD_{t,s}+\int_t^T D_{t,s-}dK_s\mk\F_t\rp,
\end{align}
where the $dK$ term in \eqref{l3} disappears for the non-reflected BSDE \eqref{qbsde}. These computations lead to the following examples.

\begin{example}\label{er}
{\rm We consider the BSDE \eqref{qbsde} 
\begin{equation*}
Y_t=-X_T+\int_t^Tg(s,Y_s+X_s, Z_s)ds-\int_t^TZ_sdW_s\qquad t\in[0,T],
\end{equation*}
where the driver $g$ satisfies assumptions (H1')-(H4). This is the  same framework as in \cite[Section 7]{er08}, but in our case the BSDE depends on the whole path of the process $X\in\Ri$. The results from \cite{er08} follow from our considerations if applied to processes $X:=X_T\ind_{[T]}$ for $X_T\in\LT$.

For $\beta\in\Ra$ and $t\in[0,T]$, we introduce the discounting factors
\begin{equation}\label{Der}
D_{t,s}:=e^{-\int_t^s\beta_udu}, \quad s\in[t,T),\quad\text{ and}\quad  D_{t,T}=0.
\end{equation}
Note that $(D_{t,s})\in\D_t$ for all $t$. 
\begin{theorem}\label{th:robdarer}
The BSDE \eqref{qbsde} induces under assumptions(H1')-(H4) a dynamic convex risk measure for processes $(\rho_t)_{t\in[0,T]}$ with the robust representation
\begin{align}\label{robdarer}
\nonumber\rho_t(X)=Y_t=\es_{(\mu,\beta)\in{\rm BMO}(\P)\times\Ra}&\left(\E_{Q^{\mu}}\lp e^{-\int_t^T\beta_udu}(-X_T)-\int_t^T\beta_sX_se^{-\int_t^s\beta_udu}ds\mk\F_t\rp\right.\\
&-\left.\E_{Q^\mu}\lp\int_t^Te^{-\int_t^s\beta_udu}g^*(s,\beta_s,\mu_s)ds\mk\F_t\rp\right),
\end{align}
where the essential supremum is attained  for each $X\in\Ri$ by some $(\bar \mu,\bar \beta)\in{\rm BMO}(\P)\times\Ra$.
\end{theorem}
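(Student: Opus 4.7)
The plan is to exploit the general identity \eqref{l1}--\eqref{l3} derived in the paragraph preceding the theorem statement, specialized to the absolutely continuous discounting factors $D_{t,s}=e^{-\int_t^s\beta_u du}$ from \eqref{Der}, and to close the representation by combining the Fenchel--Young inequality for $g^*$ with the measurable selection from Lemma~\ref{lem:gstar}. First I would check that for each $\beta\in\Ra$ the family $(D_{t,s})_{s\in[t,T]}$ defined in \eqref{Der} indeed belongs to $\D_t$: boundedness of $\beta$ makes $s\mapsto D_{t,s}$ continuous and non-increasing on $[t,T)$, while $D_{t,t-}:=1$ and $D_{t,T}:=0$ by convention.

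Next I would compute $dD_{t,s}=-\beta_s D_{t,s}\,ds$ on $[t,T)$ together with the boundary jump $\Delta D_{t,T}=-e^{-\int_t^T\beta_u du}$, and insert these into \eqref{l1}--\eqref{l3}, dropping the $dK$ term since we deal with the non-reflected BSDE \eqref{qbsde}. The terminal condition $Y_T+X_T=0$ annihilates the boundary jump that would otherwise appear in \eqref{l3}, so one arrives at the identity
\begin{align*}
Y_t = &\,\E_{Q^\mu}\!\Bigl[-e^{-\int_t^T\beta_u du}X_T -\int_t^T \beta_s X_s e^{-\int_t^s\beta_u du}\,ds\,\Bigm|\,\F_t\Bigr] \\
&+ \E_{Q^\mu}\!\Bigl[\int_t^T e^{-\int_t^s\beta_u du}\bigl(g(s,Y_s+X_s,Z_s)+\mu_s\cdot Z_s+\beta_s(Y_s+X_s)\bigr)\,ds\,\Bigm|\,\F_t\Bigr],
\end{align*}
valid for every $(\mu,\beta)\in\mathrm{BMO}(\P)\times\Ra$.

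By the Fenchel--Young inequality $g(s,y,z)+\mu\cdot z+\beta y\ge -g^*(s,\beta,\mu)$, the integrand in the second conditional expectation is bounded below by $-g^*(s,\beta_s,\mu_s)$, yielding the inequality ``$\ge$'' in \eqref{robdarer} after taking the essential supremum over $(\mu,\beta)$. For the reverse inequality and the attainment of the supremum I would invoke Lemma~\ref{lem:gstar}, which produces $(\bar\beta,\bar\mu)\in\Ra\times\mathrm{BMO}(\P)$ along which the Fenchel--Young inequality becomes an equality $\P\otimes ds$-a.s. on the trajectory $(Y,Z)$; inserting this pair into the identity above gives equality in \eqref{robdarer}, so the essential supremum is attained at $(\bar\mu,\bar\beta)$.

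The main technical obstacle lies in rigorously justifying the integration-by-parts step behind \eqref{l1}--\eqref{l3}, where one needs $\int_0^{\cdot} D_{t,s-} Z_s\, dW^\mu_s$ to be a true $Q^\mu$-martingale of zero conditional expectation. In the quadratic-growth regime this relies on $Z\in\mathrm{BMO}(\P)$ (established in the appendix for every $X\in\Ri$) together with $\mu\in\mathrm{BMO}(\P)$; the bound $0\le D_{t,s-}\le 1$ then suffices via the classical BMO change-of-measure result. Apart from this integrability verification, the argument is a direct algebraic specialization followed by a pointwise application of convex duality.
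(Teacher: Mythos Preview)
Your proposal is correct and follows essentially the same route as the paper's proof: specialize the general identity \eqref{l1}--\eqref{l3} to the discounting processes $D_{t,s}=e^{-\int_t^s\beta_u du}$ of \eqref{Der}, obtain the identity \eqref{ll2}, and then use the Fenchel--Young inequality together with Lemma~\ref{lem:gstar} to close both directions of the representation. Your version simply makes explicit two points the paper leaves implicit, namely that the boundary jump of $D$ at $T$ in line \eqref{l3} is killed by $Y_T+X_T=0$, and the BMO justification for dropping the stochastic integral against $W^\mu$.
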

\begin{proof} Applying \eqref{l1}, \eqref{l2}, and \eqref{l3} with $(D_{t,s})$ defined in \eqref{Der}, 
we obtain
\begin{align}
 \nonumber Y_t=&\E_{Q^\mu}\lp e^{-\int_t^T\beta_udu}(-X_T)-\int_t^T\beta_sX_se^{-\int_t^s\beta_udu}ds\mk\F_t\rp\\\label{ll2}
&+\E_{Q^\mu}\lp\int_t^Te^{-\int_t^s\beta_udu}\left(g(s,Y_s+X_s,Z_s)+\beta_s(Y_s+X_s)+\mu_s\cdot Z_s\right)ds\mk\F_t\rp.
\end{align}
By Lemma~\ref{lem:gstar}, \eqref{ll2}$\ge$\eqref{robdarer} for all $(\mu, \beta)\in{\rm BMO}(\P)\times\Ra$, with equality attained at some optimal $(\bar \mu, \bar \beta)$. 
\end{proof}
\begin{remarks}\label{rem:abs}
\begin{enumerate}
\item Theorem \ref{th:robdarer} follows  also directly from \cite[Theorem 7.5]{er08}, applied to the driver $h^X(t,y,z)=g(t,y+X_t,z)$ defined in Remark~\ref{rem:h}.  Indeed, we have for all $\omega$, $t$, $\beta$, and  $\mu$
\[
 (h^X)^*(\omega,t,\beta,\mu)=\beta X_t(\omega)+g^*(\omega,t,\beta,\mu).
\]
\item Note that $\rho_t$ in Theorem~\ref{th:robdarer} is of the form \eqref{eq:rmbsde}, with penalty function
\[
\gamma_t(Q^\mu,D)=\gamma_t(\mu,D)= \gamma_t(\mu,\beta)=\E_{Q^\mu}\lp\int_t^Te^{-\int_t^s\beta_udu}g^*(s,\beta_s,\mu_s)ds\mk\F_t\rp.
\]
This penalty function is concentrated on discounting measures $dD$, that are absolutely continuous with respect to the Lebesgue measure $\lambda$.
This is due to the fact that the process $X$ appears only in the driver of \eqref{qbsde}, i.e., in the $\lambda$-absolutely continuous part of the BSDE.
\end{enumerate}
\end{remarks}
}\end{example}
\begin{example}\label{reflgen}
{\rm In this example we consider the BSDE \eqref{bsdegen}
\begin{align*}
\nonumber&Y_t=-X_T+\int_t^T g(s,Y_s+X_s,Z_s)ds-\int_t^T Z_s dW_s+K_T-K_t,\quad t\in[0,T],  \\
\nonumber &Y_t \ge -X_t \quad \forall t \in [0,T],\quad \mbox{ and }\quad \int_0^T (Y_{s-}+X_{s-}) dK_s=0. 
\end{align*}
For each $t\in[0,T]$, we define the set of stopping times
\[
\Theta_t:=\lk \tau\mk \tau\;\text{is a stopping time},\;t\le\tau\le T\; \P\text{-a.s.}\rk,
\]
and for $\tau\in\Theta_t$ and $\beta \in\Ra$ the discounting factors $D\in\D_t$ via
\begin{equation}\label{Dref}
D_{t,t-}:=1,\qquad  D_{t,s}:=e^{-\int_t^s\beta_sds}\ind_{\{\tau>s\}},\quad  s\in[t,T]. 
\end{equation}
\begin{theorem}\label{th:robdarref}
 The BSDE \eqref{bsdegen} induces under assumptions (H1)-(H4) a dynamic convex risk measure for processes $(\rho_t)_{t\in[0,T]}$ with the robust representation
\begin{align}
\nonumber\rho_t(X)=Y_t=\es_{(\mu,\beta,\tau)\in{\rm BMO}(\P)\times\Ra\times\Theta_t}&\left(\E_{Q^{\mu}}\lp e^{-\int_t^\tau\beta_udu}(-X_\tau)-\int_t^\tau\beta_sX_se^{-\int_t^s\beta_udu}ds\mk\F_t\rp\right.\\\label{sl2}
&-\left.\E_{Q^\mu}\lp\int_t^\tau e^{-\int_t^s\beta_udu}g^*(s,\beta_s,\mu_s)ds\mk\F_t\rp\right)
\end{align}
for all $X\in\Ri$.
\end{theorem}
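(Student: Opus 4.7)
The plan is to mimic the proof of Theorem~\ref{th:robdarer}: specialize the general identity obtained from the sum of \eqref{l1}, \eqref{l2} and \eqref{l3} to the discounting family \eqref{Dref}, and then optimize separately over $(\mu,\beta)\in{\rm BMO}(\P)\times\Ra$ and over $\tau\in\Theta_t$. With $D_{t,s}=e^{-\int_t^s\beta_u du}\ind_{\{\tau>s\}}$, the Stieltjes measure $dD_{t,\cdot}$ on $[t,T]$ has an absolutely continuous density $-\beta_s e^{-\int_t^s\beta_u du}$ on $[t,\tau)$ and a single atom $\Delta D_{t,\tau}=-e^{-\int_t^\tau\beta_u du}$. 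Substituting into \eqref{l1}--\eqref{l3} and rearranging, the identity becomes
\begin{align*}
Y_t=&\;\E_{Q^\mu}\!\left[e^{-\int_t^\tau\beta_u du}(-X_\tau)-\int_t^\tau\beta_s X_s e^{-\int_t^s\beta_u du}\,ds\,\Big|\,\F_t\right]\\
&+\E_{Q^\mu}\!\left[\int_t^\tau e^{-\int_t^s\beta_u du}\bigl(g(s,Y_s+X_s,Z_s)+\beta_s(Y_s+X_s)+\mu_s\cdot Z_s\bigr)\,ds\,\Big|\,\F_t\right]\\
&+R_t(\mu,\beta,\tau),
\end{align*}
where
\[
R_t(\mu,\beta,\tau):=\E_{Q^\mu}\!\left[(Y_\tau+X_\tau)e^{-\int_t^\tau\beta_u du}+\int_{(t,\tau]}e^{-\int_t^s\beta_u du}\,dK_s\,\Big|\,\F_t\right]
\]
collects the contributions from the jump of $D$ at $\tau$ and from the reflection term.

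Two elementary observations then give the ``$\ge$'' half of \eqref{sl2}: the obstacle constraint $Y+X\ge 0$, $\beta\ge 0$, and the monotonicity of $K$ imply $R_t(\mu,\beta,\tau)\ge 0$, and Lemma~\ref{lem:gstar} supplies the pointwise Fenchel bound
\[
g(s,Y_s+X_s,Z_s)+\beta_s(Y_s+X_s)+\mu_s\cdot Z_s\ge -g^*(s,\beta_s,\mu_s)\quad\P\otimes dt\text{-a.e.},
\]
which bounds the middle line of the display above below by $-\E_{Q^\mu}\bigl[\int_t^\tau e^{-\int_t^s\beta_u du}g^*(s,\beta_s,\mu_s)\,ds\mid\F_t\bigr]$. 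Combining the two estimates shows that $Y_t$ dominates the right-hand side of \eqref{sl2} for every admissible triple $(\mu,\beta,\tau)$.

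For the reverse inequality and the attainment of the essential supremum, I would choose $(\bar\mu,\bar\beta)\in{\rm BMO}(\P)\times\Ra$ as the progressively measurable maximizer produced by Lemma~\ref{lem:gstar}, which turns the Fenchel bound into an equality, and take
\[
\bar\tau:=\inf\{s\ge t:Y_s+X_s=0\}\wedge T\in\Theta_t.
\]
Right-continuity of $Y+X$ together with the boundary identity $Y_T+X_T=0$ read off from \eqref{bsdegen} at $t=T$ forces $Y_{\bar\tau}+X_{\bar\tau}=0$, while $Y_s+X_s>0$ on $[t,\bar\tau)$ combined with the Skorokhod minimality condition $\int_0^T(Y_{s-}+X_{s-})dK_s=0$ should confine the support of $dK$ away from $(t,\bar\tau]$. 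Together these imply $R_t(\bar\mu,\bar\beta,\bar\tau)=0$, hence equality in \eqref{sl2}. The delicate point will be this last step in the presence of inaccessible jumps of the c\`adl\`ag obstacle $X$, because $K$ could a priori charge an instant at which $Y+X$ jumps off from zero; I expect to rule this out by invoking the minimality characterization of the reflection process for RBSDEs with c\`adl\`ag obstacle as in \cite{h2,LepXu5}. All the remaining structural properties of $(\rho_t)$ are already guaranteed by Proposition~\ref{prop:rm}.
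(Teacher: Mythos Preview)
Your derivation of the identity for $Y_t$ by specializing \eqref{l1}--\eqref{l3} to the family \eqref{Dref} is correct, as is your argument for the inequality ``$\ge$'' in \eqref{sl2}: both the nonnegativity of $R_t$ and the Fenchel bound from Lemma~\ref{lem:gstar} match the paper exactly.

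The genuine gap is in the reverse inequality. You attempt to prove that the essential supremum is \emph{attained} at $\bar\tau:=\inf\{s\ge t:Y_s+X_s=0\}$, but this is false in general, and no appeal to the minimality characterization in \cite{h2,LepXu5} can repair it. The difficulty is not the interior points you flag, but the endpoint $\bar\tau$ itself: nothing prevents $Y_{\bar\tau-}+X_{\bar\tau-}=0$, and then the Skorokhod condition permits $\Delta K_{\bar\tau}>0$, so $R_t(\bar\mu,\bar\beta,\bar\tau)>0$. A concrete counterexample with $g\equiv0$, $T=2$, and the deterministic process $X_s=-s\ind_{\{s<1\}}$: here $-X_s=s\ind_{\{s<1\}}$, the Snell envelope is $Y_s=\ind_{\{s<1\}}$, and $K_s=\ind_{\{s\ge1\}}$ satisfies the Skorokhod condition since $Y_{1-}+X_{1-}=0$. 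One finds $\bar\tau=1$, $Y_{\bar\tau}+X_{\bar\tau}=0$, but $\int_{(0,\bar\tau]}dK_s=1$, hence $R_0(\bar\tau)=1$. Equivalently, $-X_{\bar\tau}=0\neq 1=Y_0$, and indeed $\sup_{\tau}(-X_\tau)=1$ is not attained by any stopping time. Note that the theorem only asserts an essential supremum, not a maximum.

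The paper's remedy is to abandon attainment and use, for each $\varepsilon>0$, the stopping time $\tau^\varepsilon:=\inf\{s\ge t:Y_s+X_s\le\varepsilon\}$. On $[t,\tau^\varepsilon)$ one has $Y_s+X_s>\varepsilon$, hence $Y_{s-}+X_{s-}\ge\varepsilon>0$ for every $s\in(t,\tau^\varepsilon]$, and now the Skorokhod condition \emph{directly} forces $K_{\tau^\varepsilon}-K_t=0$ (this is \cite[Proposition~3.1]{LepXu5}). Combined with $Y_{\tau^\varepsilon}+X_{\tau^\varepsilon}\le\varepsilon$, this yields $R_t(\bar\mu,\bar\beta,\tau^\varepsilon)\le\varepsilon$, so the right-hand side of \eqref{sl2} is at least $Y_t-\varepsilon$ for every $\varepsilon>0$, which finishes the proof.
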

\begin{proof} 
Applying \eqref{l1}, \eqref{l2}, and \eqref{l3} with $(D_{t,s})$ defined in \eqref{Dref}, and using $dD_{t,s}=-\ind_{\{s\le\tau\}}\beta_s e^{-\int_t^s\beta_udu}ds-e^{-\int_t^\tau\beta_udu}\delta_{\{\tau\}}(ds)$, and $D_{t,s-}=e^{-\int_t^s\beta_udu}\ind_{\{\tau\ge s\}}$,  we obtain
\begin{align}
\nonumber Y_t=&\E_{Q^\mu}\lp e^{-\int_t^\tau\beta_udu}(-X_\tau)-\int_t^\tau\beta_sX_se^{-\int_t^s\beta_udu}ds\mk\F_t\rp\\\label{li2}
&+\E_{Q^\mu}\lp\int_t^\tau e^{-\int_t^s\beta_udu}\left(g(s,Y_s+X_s,Z_s)+\beta_s(Y_s+X_s)+\mu_s\cdot Z_s\right)ds\mk\F_t\rp\\\label{li3}
&+\E_{Q^\mu}\lp e^{-\int_t^\tau\beta_udu}(Y_\tau+X_\tau)+\int_t^\tau e^{-\int_t^s\beta_udu}dKs\mk\F_t\rp.
\end{align}
By Lemma~\ref{lem:gstar},  \eqref{li2}$\ge$\eqref{sl2} for all $(\mu,\beta, \tau)$, with equality attained  independently of $\tau$ at some  $(\bar\mu,\bar\beta)\in{\rm BMO}(\P)\times\Ra$. Moreover, since $Y_t+X_t\ge0$ for all $t$, and $K$ is non-decreasing, \eqref{li3}$\ge 0$ for all $\tau\in\Theta_t$; this proves ``$\ge$'' in the representation. 
On the other hand, for any $\varepsilon>0$ we can define the stopping time 
\[
 \tau^\varepsilon:=\inf\lk s\ge t\mk Y_s\le -X_s+\varepsilon\rk\in\Theta_t.
\]
It follows as in the proof of \cite[Proposition 3.1]{LepXu5} that $K_{\tau^\varepsilon}-K_t=0$, and hence 
\[
\E_{Q^{\bar\mu}}\lp e^{-\int_t^{\tau^\varepsilon}\bar\beta_udu}(Y_{\tau^\varepsilon}+X_{\tau^\varepsilon})+\int_t^{\tau^\varepsilon} e^{-\int_t^s\bar\beta_udu}dKs\mk\F_t\rp\le \varepsilon. 
\]
This shows that the right-hand-side of the representation \eqref{sl2} is larger or equal than $Y_t-\varepsilon$ for any $\varepsilon>0$, and proves the equality.
\end{proof} 
}\end{example}

\begin{example}\label{reflriedel}
{\rm If the generator $g$ in the previous example does not depend on $y$, the BSDE \eqref{bsdegen} takes the form
\begin{align}
\label{bsderied}&Y_t=-X_T+\int_t^T g(s,Z_s)ds-\int_t^T Z_s dW_s+K_T-K_t,\quad t\in[0,T],  \\
\nonumber &Y_t \ge -X_t \quad \forall t \in [0,T],\quad \mbox{ and }\quad \int_0^T (Y_{s-}+X_{s-}) dK_s=0. 
\end{align}
In this case the conjugate $g^*(t,\beta,\mu)=\infty$ if $\beta\not\equiv0$, 
and thus the penalty function in \eqref{sl2} is concentrated on the discounting factors $D\in\D_t$ 
such that  $D_{t,t-}=1$, and $D_{t,s}=\ind_{\{\tau>s\}}$ for $s\in[t,T]$ and $\tau\in\Theta_t$. We write $g^*(t,\mu):=g^*(t,0,\mu)$; 
then Theorem~\ref{th:robdarref} takes the following form.
\begin{corollary}\label{th:robdarried}
 The BSDE \eqref{bsderied} induces under assumptions (H1)-(H4) a dynamic convex risk measure for processes $(\rho_t)_{t\in[0,T]}$ with the robust representation
\begin{equation*}
\rho_t(X)=Y_t=\es_{(\mu,\tau)\in{\rm BMO}(\P)\times\Theta_t}\left(\E_{Q^{\mu}}\lp -X_{\tau}\mk\F_t\rp-\E_{Q^\mu}\lp\int_t^{\tau}g^*(s,\mu_s)ds\mk\F_t\rp\right)
\end{equation*}
for all $X\in\Ri$.
\end{corollary}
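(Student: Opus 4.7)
The plan is to deduce this corollary directly from Theorem~\ref{th:robdarref} by observing that when $g$ has no dependence on $y$, the Legendre--Fenchel conjugate $g^*$ forces the discount rate $\beta$ to vanish on all relevant penalty contributions.

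First, I would compute $g^*$ for a generator that does not depend on $y$. Writing
\[
g^*(\omega,t,\beta,\mu) = \sup_{(y,z)\in\real\times\real^d}\lk -\beta y -\mu\cdot z - g(\omega,t,z)\rk,
\]
the inner supremum over $y\in\real$ equals $+\infty$ whenever $\beta\neq 0$. Hence $g^*(t,\beta,\mu)<\infty$ forces $\beta=0$ $\P\otimes dt$-a.e., and on $\{\beta=0\}$ one has $g^*(t,0,\mu)=\sup_{z}\{-\mu\cdot z - g(t,z)\}=:g^*(t,\mu)$. In particular, for any $(\mu,\beta,\tau)\in{\rm BMO}(\P)\times\Ra\times\Theta_t$ with $\beta\not\equiv 0$ on $[\![t,\tau]\!]$, the penalty term
\[
\E_{Q^\mu}\lp\int_t^\tau e^{-\int_t^s\beta_udu}g^*(s,\beta_s,\mu_s)ds\mk\F_t\rp
\]
is $\P$-a.s.\ infinite, so such a triple contributes $-\infty$ to the essential supremum in \eqref{sl2} and may be discarded.

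Next I would restrict the essential supremum in Theorem~\ref{th:robdarref} to triples with $\beta\equiv 0$. For such $\beta$ we have $e^{-\int_t^s\beta_udu}=1$ for all $s$, the integrand $\beta_sX_s e^{-\int_t^s\beta_udu}$ vanishes, and the discounting factor $D_{t,s}$ defined in \eqref{Dref} reduces to $\ind_{\{\tau>s\}}$ with $D_{t,t-}=1$ (which is exactly the class of discounting factors announced in the statement). Consequently, the three terms appearing in \eqref{sl2} collapse to
\[
\E_{Q^\mu}\lp -X_\tau\mk\F_t\rp\quad\text{and}\quad \E_{Q^\mu}\lp\int_t^\tau g^*(s,\mu_s)\,ds\mk\F_t\rp,
\]
while the purely $\beta$-dependent integral $\int_t^\tau\beta_sX_s e^{-\int_t^s\beta_udu}ds$ vanishes identically.

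Combining the two reductions yields
\[
\rho_t(X)=\es_{(\mu,\tau)\in{\rm BMO}(\P)\times\Theta_t}\lk \E_{Q^\mu}\lp -X_\tau\mk\F_t\rp - \E_{Q^\mu}\lp\int_t^\tau g^*(s,\mu_s)\,ds\mk\F_t\rp\rk,
\]
which is the claimed representation. The only subtle point is to verify that no optimizer is lost by discarding $\beta\not\equiv 0$: this is automatic because Theorem~\ref{th:robdarref} already guarantees that the optimal $(\bar\mu,\bar\beta)$ can be chosen independently of $\tau$, and any $\bar\beta$ with $\E_{Q^{\bar\mu}}[\int_t^\tau g^*(s,\bar\beta_s,\bar\mu_s)ds|\F_t]=+\infty$ on a set of positive measure would contradict the finiteness of $\rho_t(X)$ for $X\in\Ri$ (the bounded process $Y$ being the maximum in \eqref{sl2}). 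Hence no genuine obstacle arises; the corollary follows by direct substitution into Theorem~\ref{th:robdarref}.
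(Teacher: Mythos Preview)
Your proposal is correct and follows essentially the same approach as the paper: the paper simply observes (in the text immediately preceding the corollary) that $g^*(t,\beta,\mu)=\infty$ whenever $\beta\neq 0$, so the penalty in Theorem~\ref{th:robdarref} is concentrated on $\beta\equiv 0$, and substituting $\beta\equiv 0$ into \eqref{sl2} gives the stated representation. Your final paragraph about ``no optimizer is lost'' is more than what is needed---since any $(\mu,\beta,\tau)$ with $\beta>0$ on a set of positive $\P\otimes dt$-measure in $\llbracket t,\tau\rrbracket$ contributes $-\infty$ to the essential supremum, such triples are automatically irrelevant regardless of whether a maximizer exists---but it does no harm.
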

This example was studied in \cite{morlais13,  bky10, ried10} in the context of optimal stopping of risk measures for random variables. In our framework it appears naturally as an example of a risk measure for processes.
}\end{example}

\begin{example}\label{reflnot}
{\rm In order to identify a BSDE as a risk measure for processes, it seems to be crucial that the driver $g$, as well as the reflection term $K$ depend on the \emph{sum} $X+Y$. For instance, it was shown in \cite[Section 7]{ekppq97} for the classical RBSDE 
\begin{align*}
&Y_t=-X_T+\int_t^T g(s, Y_s, Z_s)ds-\int_t^T Z_s dW_s+K_T-K_t,\quad t\in[0,T],  \\
\nonumber &Y_t \ge -X_t \quad \forall t \in [0,T],\quad \mbox{ and }\quad \int_0^T (Y_{s}+X_{s}) dK_s=0, 
\end{align*}
under the assumptions that $X$ is continuous and $g$ satisfies (H1)-(H3), that $Y$ has the dual representation
\begin{equation}\label{robdarnot}
Y_t(X)=\es_{(\mu,\beta,\tau)\in{\rm BMO}(\P)\times\Ra\times\Theta_t}\left(\E_{Q^{\mu}}\lp e^{-\int_t^\tau\beta_udu}(-X_\tau)-\int_t^\tau e^{-\int_t^s\beta_udu}g^*(s,\beta_s,\mu_s)ds\mk\F_t\rp\right).
\end{equation}
If $g$ (resp.\ $g^*$) does not depend on $Y+X$, and the right-hand-side of \eqref{robdarnot}  does not take the form as in Theorem~\ref{th:robdarref}, $Y$ does not define a conditional risk measure for processes in the sense of Definition~\ref{def:cond}: It does not satisfy the axiom of cash additivity. 
 }\end{example}

In general, using  Lebesgue decomposition, we can write every measure $dD$ induced by a discounting process $D\in\D_0$ as a sum $dD^{\ll}+dD^{\perp}$, where $dD^{\ll}$ denotes the absolutely continuous, and  $dD^{\perp}$ the singular part of $dD$ with respect to the Lebesgue measure $\lambda$. For instance, for $D$ defined in \eqref{Dref} we have
\[
 dD_{t,s}=\underbrace{-\ind_{\{s\le\tau\}}\beta_s e^{-\int_t^s\beta_udu}ds}_{dD^{\ll}}-\underbrace{e^{-\int_t^\tau\beta_udu}\delta_{\{\tau\}}(ds)}_{dD^{\perp}}.
\]
As we have noted in Remark~\ref{rem:abs}, only absolutely continuous discounting factors $dD^{\ll}$ appear in the robust representation of the risk measure, if  there is no reflection, and only the driver of the BSDE depends on the sum $Y+X$. On the other hand, as seen in Example~\ref{reflriedel}, if there is reflection, and the driver does not depend on $Y+X$, absolutely continuous parts $dD^{\ll}$ disappear, and only singular parts $dD^{\perp}$ contribute to the robust representation.

The study of general relation between BSDEs of type \eqref{bsdegen} and risk measures of the form \eqref{eq:rmbsde} is subject of future research.
Examples presented in this paper suggest that appearance of absolutely continuous discounting factors corresponds to the dependence of the driver $g$ on the sum $Y+X$, whereas appearance of the singular discounting terms is induced by the reflection term $K$ depending on $Y+X$. Also more general reflection terms, induced by more complex penalty function on $dD^{\perp}$, can be thought about.

\section*{Appendix}

We provide here estimates for the BSDEs \eqref{qbsde} and \eqref{bsdegen}, that are used in the proof of Lemma~\ref{lem:gstar}.  The results for quadratic BSDE  \eqref{qbsde} follow basically from \cite{bek8, er08}; the results for the reflected BSDE \eqref{bsdegen} 
might be known, but since we did not find them explicitly written in the literature, we give the proofs here. Throughout this section we consider a BSDE \eqref{qbsde} under assumptions (H1)-(H4), and RBSDE \eqref{bsdegen} under assumptions (H1')-(H4). 

\begin{prop}
\label{prop:appendix}
Let $(Y,Z,K)$ (resp.\ $(Y,Z)$) be the solution of \eqref{bsdegen} (resp.\ the maximal solution of \eqref{qbsde}) for $X\in\Ri$. Then $Y$ is bounded, and $Z\in{\rm BMO}(\P)$. 
\end{prop}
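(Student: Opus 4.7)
The strategy is to treat both equations in a unified way via the reformulation of Remark~\ref{rem:h}: for fixed $X\in\Ri$, I set $h^X(t,y,z):=g(t,y+X_t,z)$ and rewrite \eqref{qbsde} (resp.\ \eqref{bsdegen}) as a BSDE (resp.\ RBSDE) with driver $h^X$, terminal condition $-X_T\in L^\infty(\F_T)$, and, in the reflected case, bounded obstacle $-X$. Since $\|X\|_{\Ri}<\infty$, $h^X$ inherits from $g$ either Lipschitz continuity in $(y,z)$ under (H1) or quadratic growth in $z$ and linear growth in $y$ under (H1'), with constants enlarged in a controlled way by $\|X\|_{\Ri}$; the normalization $g(\cdot,0,0)=0$ transfers to $|h^X(\cdot,0,0)|\le C_{\mathrm{Lip}}\|X\|_{\Ri}$ in the Lipschitz case (resp.\ $|h^X(\cdot,0,0)|\le C(1+\|X\|_{\Ri})$ in the quadratic case), so that $h^X$ falls squarely within the scope of the classical a priori estimates.

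For the quadratic BSDE \eqref{qbsde}, boundedness of the maximal solution $Y$ is the a priori estimate of Kobylanski \cite{Kobylanski}, obtained by applying It\^o's formula to $e^{\lambda|Y|}$ with $\lambda$ large enough to absorb the quadratic term in $z$; the same exponential transform simultaneously yields the BMO property of $Z$, as in \cite{bek8}. For the reflected BSDE \eqref{bsdegen}, the lower bound $Y_t\ge -X_t\ge -\|X\|_{\Ri}$ is immediate from the reflection constraint; for the upper bound I would exhibit a deterministic supersolution, namely $\bar Y_t:=M+\kappa(T-t)$ with $M:=\|X\|_{\Ri}$ and $\kappa$ chosen large enough that $\kappa\ge C_{\mathrm{Lip}}(2M+\kappa T)$, verify that $(\bar Y,0,0)$ is a supersolution of the RBSDE with driver $h^X$ and obstacle $-X$, and then invoke RBSDE comparison (e.g.\ \cite{h2}) to conclude $Y\le M+\kappa T$.

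It remains to verify the BMO estimate on $Z$ in the reflected case. I would apply It\^o's formula to $|Y|^2$ between an arbitrary stopping time $\tau$ and $T$: using the Lipschitz bound on $h^X$ together with the $L^\infty$-bound on $Y$ to control $Y\,h^X(\cdot,Y,Z)$ by Young's inequality absorbs half of the $Z$-integral into the left-hand side, and the remaining reflection term is bounded by $\|Y\|_\infty\,\E[K_T-K_\tau\mid\F_\tau]$. Taking conditional expectation in \eqref{bsdegen} on $[\tau,T]$ produces an estimate of $\E[K_T-K_\tau\mid\F_\tau]$ of the form $C_1+C_2\sqrt{\E[\int_\tau^T|Z_s|^2\,ds\mid\F_\tau]}$, and a final absorption yields the desired uniform BMO bound. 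The hard part will be precisely this last coupled estimate between $Z$ and $K$: the $L^\infty$-bound on $Y$ produced in the supersolution step is essential to prevent the $\int Y\,dK$ term from blowing up, while no genuine use of the Skorokhod identity beyond $K\ge 0$ seems to be required.
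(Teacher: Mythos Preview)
Your argument is correct in outline, but the paper proceeds differently on both points, and more economically. For boundedness, the paper does not invoke Kobylanski or build a supersolution at all: it simply uses the risk-measure axioms already established in Proposition~\ref{prop:rm}. Since $-\|X\|_{\Ri}\ind_{[t,T]}\le \pi_{t,T}(X)\le \|X\|_{\Ri}\ind_{[t,T]}$, monotonicity, cash invariance and normalization give $|Y_t(X)|\le\|X\|_{\Ri}$ in two lines, uniformly for both equations. (Incidentally, your supersolution inequality $\kappa\ge C_{\mathrm{Lip}}(2M+\kappa T)$ has no solution when $C_{\mathrm{Lip}}T\ge 1$; you would need an exponential supersolution, or---better---observe that (H3) and (H4) make the constant $M=\|X\|_{\Ri}$ itself a supersolution, since $g(\cdot,M+X_\cdot,0)\le g(\cdot,0,0)=0$.) For the BMO estimate in the reflected case, the paper applies It\^o's formula to $e^{-\alpha Y}$ rather than to $|Y|^2$: the point is that both the $dK$-integral and the jump sum then enter with a non-positive sign (because $-\alpha e^{-\alpha Y_{s-}}\,dK_s\le 0$ and $e^{-\alpha Y_s}-e^{-\alpha Y_{s-}}+\alpha e^{-\alpha Y_{s-}}\Delta_sY\ge 0$ by convexity of the exponential), so the reflection term can simply be discarded and no separate estimate of $\E[K_T-K_\tau\mid\F_\tau]$ is needed; choosing $\alpha>2C$ closes the inequality directly. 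Your $|Y|^2$ route with the coupled $K$--$Z$ absorption also works, but costs the extra step you describe; the exponential transform buys you precisely the avoidance of that coupling.
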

\begin{proof}
To see that $Y$ is bounded, we use monotonicity, cash additivity, and normalization as proved in Proposition~\ref{prop:rm}. Let $\|X\|_{\Ri}=:B$, then
\[
 Y_t(X)\le Y_t(0-B\ind_{[t,T]})=B\quad \P\text{-a.s.\ for all}\;\; t\in[0,T],
\]
and the converse inequality follows in the same manner. 
\\
\noindent
The proof that $Z\in{\rm BMO}(\P)$ in the non-reflected quadratic case follows as in \cite[Proposition 7.3]{bek8}, using that $Y$ and $X$ are bounded. In the reflected case we use classical estimates, as for example in \cite{PossamaiZhou}, where such technique is used in the context of second order BSDEs. 

It\^o's formula implies for any $\tau\in\Theta_0$ and any $\alpha>0$  that
\begin{align}
\label{eq:good}
e^{-\alpha Y_\tau} &= e^{-\alpha Y_\tau} - \alpha \int_\tau^T e^{-\alpha Y_{s}} g(s,Y_s+X_s,Z_s) ds + \alpha \int_\tau^T e^{-\alpha Y_s} Z_s dW_s -\frac{\alpha^2}{2} \int_\tau^T e^{-\alpha Y_s} |Z_s|^2 ds \nonumber\\
& - \alpha \int_\tau^T e^{-\alpha Y_{s-}} dK_s  -\sum_{\tau <s \leq T} [e^{-\alpha Y_s} - e^{-\alpha Y_{s-}} +\alpha e^{-\alpha Y_{s-}} \Delta_s Y ].
\end{align}
Since $K$ is non-decreasing, and thus $\Delta_s Y =\Delta_s K \geq 0$, and since the mapping $x\mapsto e^{-x}-1+x$ is non-negative on $\real_+$, 
the last two terms  are non-positive. Hence \eqref{eq:good} rewrites as:
\begin{align*}
\frac{\alpha^2}{2} \int_\tau^T e^{-\alpha Y_s} |Z_s|^2 ds +e^{-\alpha Y_\tau}  &\leq e^{-\alpha Y_T} - \alpha \int_\tau^T e^{-\alpha Y_{s}} g(s,Y_s+X_s,Z_s) ds + \alpha \int_\tau^T e^{-\alpha Y_s} Z_s dW_s. 
\end{align*}
This implies, since $g$ has Lipschitz growth, and $X$ and $Y$ are bounded, that
\begin{align*}
\frac{\alpha^2}{2} \int_\tau^T e^{-\alpha Y_s} |Z_s|^2 ds \leq e^{-\alpha Y_T} + C \alpha \int_\tau^T e^{-\alpha Y_{s}} (1+|Z_s|^2) ds + \alpha \int_\tau^T e^{-\alpha Y_s} Z_s dW_s, 
\end{align*}
where we have used that $|x|\leq 1+|x|^2$.  
($C$ in this proof denotes a generic constant, which can differ from line to line.) Using again the fact that $Y$ is bounded, we get that there exists a constant $\tilde{C}$ (which only depends on $T$ but not on $\tau$) such that
\begin{align*}
(\frac{\alpha^2}{2} - C \alpha) \int_\tau^T e^{-\alpha Y_s} |Z_s|^2 ds &\leq \tilde{C} + \alpha \int_\tau^T e^{-\alpha Y_s} Z_s dW_s. 
\end{align*}
Taking conditional expectations on both sides of this inequality leads to
$$ (\frac{\alpha^2}{2} - C \alpha) \E\left[\int_\tau^T e^{-\alpha Y_s} |Z_s|^2 ds \Big\vert \mathcal{F}_t \right] \leq \tilde{C} \quad \P\text{-a.s.}, $$
which concludes the proof again by boundedness and $Y$ and by choosing $\alpha > 2C$.
\end{proof}

\section*{Acknowledgments}
We thank Kostas Kardaras and Michael Kupper for helpful comments and discussions.  The authors acknowledge  support from the DFG Research Center \textsc{Matheon}.


\begin{thebibliography}{10}

\bibitem{afp9}
B.~Acciaio, H.~F{{\"o}}llmer, and I.~Penner.
\newblock Risk assessment for uncertain cash flows: model ambiguity,
  discounting ambiguity, and the role of bubbles.
\newblock {\em Finance Stoch.}, 16(4):669--709, 2012.

\bibitem{adeh97}
P.~Artzner, F.~Delbaen, J.-M. Eber, and D.~Heath.
\newblock Thinking coherently.
\newblock {\em RISK}, 10:68--71, 1997.

\bibitem{adeh99}
P.~Artzner, F.~Delbaen, J.-M. Eber, and D.~Heath.
\newblock Coherent measures of risk.
\newblock {\em Math. Finance}, 9(3):203--228, 1999.

\bibitem{assa11}
H.~Assa.
\newblock Lebesgue property of convex risk measures for bounded c{\`a}dl{\`a}g
  processes.
\newblock {\em Methods Appl. Anal.}, 18(3):335--349, 2011.

\bibitem{bek8}
P.~Barrieu and N.~El~Karoui.
\newblock Pricing, hedging and optimally designing derivatives via
  minimization of risk measures. In {\em Indifference pricing: Theory and applications},
\newblock Princeton Series in Financial Engineering, pages 77--146,
Princeton University Press,  Princeton, NJ, 2009.
    
\bibitem{bky10}
E.~Bayraktar, I.~Karatzas, and S.~Yao.
\newblock Optimal stopping for dynamic convex risk measures.
\newblock {\em Illinois J. Math.}, 54(3):1025--1067, 2012.
  
\bibitem{bkx12}
E.~Bayraktar, C.~Kardaras, and H.~Xing.
\newblock Valuation equations for stochastic volatility models.
\newblock {\em SIAM J. Financial Math.}, 3:351--373, 2012. 

\bibitem{BayraktarYao11a}
E.~Bayraktar and S.~Yao.
\newblock Optimal stopping for non-linear expectations---{P}art {I}
\newblock {\em Stochastic Process. Appl.}, 121(2):185--211, 2011.

\bibitem{BayraktarYao11b}
E.~Bayraktar and S.~Yao.
\newblock Optimal stopping for non-linear expectations---{P}art {II}
\newblock {\em Stochastic Process. Appl.}, 121(2):212--264, 2011.

\bibitem{Bichteler}
K.~Bichteler.
\newblock {\em Stochastic integration with jumps}, volume~89 of {\em
  Encyclopedia of Mathematics and its Applications}.
\newblock Cambridge University Press, Cambridge, 2002.

\bibitem{ctr12}
P.~Carr, T.~Fisher, and J.~Ruf.
\newblock On the hedging of options on exploding exchange rates.
\newblock {\em Preprint}, 2012.

\bibitem{cdk4}
P.~Cheridito, F.~Delbaen, and M.~Kupper.
\newblock Coherent and convex monetary risk measures for bounded c{\`a}dl{\`a}g
  processes.
\newblock {\em Stochastic Process. Appl.}, 112(1):1--22, 2004.

\bibitem{cdk5}
P.~Cheridito, F.~Delbaen, and M.~Kupper.
\newblock Coherent and convex monetary risk measures for unbounded
  c{\`a}dl{\`a}g processes.
\newblock {\em Finance Stoch.}, 9(3):369--387, 2005.

\bibitem{cdk6}
P.~Cheridito, F.~Delbaen, and M.~Kupper.
\newblock Dynamic monetary risk measures for bounded discrete-time processes.
\newblock {\em Electron. J. Probab.}, 11:no. 3, 57--106, 2006.

\bibitem{dm1}
C.~Dellacherie and P.-A. Meyer.
\newblock {\em Probabilities and potential}, volume~29 of {\em North-Holland
  Mathematics Studies}.
\newblock North-Holland Publishing Co., Amsterdam, 1978.

\bibitem{dm2}
C.~Dellacherie and P.-A. Meyer.
\newblock {\em Probabilities and potential. {B}}, volume~72 of {\em
  North-Holland Mathematics Studies}.
\newblock North-Holland Publishing Co., Amsterdam, 1982.
\newblock Theory of martingales, Translated from the French by J. P. Wilson.

\bibitem{ekppq97}
N.~El~Karoui, C.~Kapoudjian, E.~Pardoux, S.~Peng, and M.~C. Quenez.
\newblock Reflected solutions of backward {SDE}'s, and related obstacle
  problems for {PDE}'s.
\newblock {\em Ann. Probab.}, 25(2):702--737, 1997.

\bibitem{er08}
N.~El~Karoui and C.~Ravanelli.
\newblock Cash subadditive risk measures and interest rate ambiguity.
\newblock {\em Math. Finance}, 19(4):561--590, 2009.

\bibitem{f72}
H.~F{{\"o}}llmer.
\newblock The exit measure of a supermartingale.
\newblock {\em Z. Wahrscheinlichkeitstheorie und Verw. Gebiete}, 21:154--166,
  1972.

\bibitem{fs2}
H.~F{{\"o}}llmer and A.~Schied.
\newblock Convex measures of risk and trading constraints.
\newblock {\em Finance Stoch.}, 6(4):429--447, 2002.

\bibitem{fs11}
H.~F{{\"o}}llmer and A.~Schied.
\newblock {\em Stochastic finance}.
\newblock Walter de Gruyter \& Co., Berlin, extended edition, 2011.
\newblock An introduction in discrete time.

\bibitem{fr2}
M.~Frittelli and E.~Rosazza~Gianin.
\newblock Putting order in risk measures.
\newblock {\em Journal of Banking Finance}, 26(7):1473--1486, 2002.

\bibitem{h2}
S.~Hamad{{\`e}}ne.
\newblock Reflected {BSDE}'s with discontinuous barrier and application.
\newblock {\em Stoch. Stoch. Rep.}, 74(3-4):571--596, 2002.

\bibitem{iw65}
K.~It{\^o} and S.~Watanabe.
\newblock Transformation of {M}arkov processes by multiplicative functionals.
\newblock {\em Ann. Inst. Fourier (Grenoble)}, 15(fasc. 1):13--30, 1965.

\bibitem{Jacod}
J.~Jacod.
\newblock {\em Calcul stochastique et probl{\`e}mes de martingales}, volume 714
  of {\em Lecture Notes in Mathematics}.
\newblock Springer, Berlin, 1979.

\bibitem{kls79}
Ju.~M. Kabanov, R.~{\v{S}}. Lipcer, and A.~N. {\v{S}}irjaev.
\newblock Absolute continuity and singularity of locally absolutely continuous
  probability distributions. {I}.
\newblock {\em Mat. Sb. (N.S.)}, 107(149)(3):364--415, 463, 1978.

\bibitem{kard10}
C.~Kardaras.
\newblock Num{\'e}raire-invariant preferences in financial modeling.
\newblock {\em Ann. Appl. Probab.}, 20(5):1697--1728, 2010.

\bibitem{kkn12}
C.~Kardaras, D.~Kreher, and A.~Nikeghbali.
\newblock Strict local martingales and bubbles.
\newblock {\em Preprint}, 2011.

\bibitem{Kazamaki}
N.~Kazamaki.
\newblock {\em Continuous exponential martingales and {BMO}}, volume 1579 of
  {\em Lecture Notes in Mathematics}.
\newblock Springer-Verlag, Berlin, 1994.

\bibitem{Kobylanski}
M.~Kobylanski.
\newblock Backward stochastic differential equations and partial differential
  equations with quadratic growth.
\newblock {\em Ann. Probab.}, 28(2):558--602, 2000.

\bibitem{LepXu5}
J.-P. Lepeltier and M.~Xu.
\newblock Penalization method for reflected backward stochastic differential
  equations with one r.c.l.l. barrier.
\newblock {\em Statist. Probab. Lett.}, 75(1):58--66, 2005.

\bibitem{morlais13}
M.-A. Morlais.
\newblock Reflected backward stochastic differential equations and a class of
  non linear dynamic pricing rule.
\newblock {\em Stochastics An International Journal of Probability and Stochastic Processes}, 85(1):1--26, 2013.


\bibitem{NajNik09}
J.~Najnudel and A.~Nikeghbali.
\newblock A new kind of augmentation of filtrations.
\newblock {\em ESAIM Probab. Stat.}, 15(In honor of Marc Yor, suppl.):S39--S57,
  2011.

\bibitem{par67}
K.~R. Parthasarathy.
\newblock {\em Probability measures on metric spaces}.
\newblock Probability and Mathematical Statistics, No. 3. Academic Press Inc.,
  New York, 1967.

\bibitem{peng04}
S.~Peng.
\newblock Nonlinear expectations, nonlinear evaluations and risk measures.
\newblock In {\em Stochastic methods in finance}, volume 1856 of {\em Lecture
  Notes in Math.}, pages 165--253. Springer, Berlin, 2004.

\bibitem{px5}
S.~Peng and M.~Xu.
\newblock The smallest {$g$}-supermartingale and reflected {BSDE} with single
  and double {$L^2$} obstacles.
\newblock {\em Ann. Inst. H. Poincar{\'e} Probab. Statist.}, 41(3):605--630,
  2005.

\bibitem{PossamaiZhou}
D.~Possamai and C.~Zhou.
\newblock Second order backward stochastic differential equations with
  quadratic growth.
\newblock {\em Preprint}, 2012.

\bibitem{ried10}
F.~Riedel.
\newblock Optimal stopping under ambiguity in continuous time.
\newblock {\em Working paper 429}, Institute of Mathematical Economics, {B}ielefeld
  university.

\bibitem{rie9}
F.~Riedel.
\newblock Optimal stopping with multiple priors.
\newblock {\em Econometrica}, 77(3):857--908, 2009.

\bibitem{ro6}
E.~Rosazza~Gianin.
\newblock Risk measures via {$g$}-expectations.
\newblock {\em Insurance Math. Econom.}, 39(1):19--34, 2006.

\end{thebibliography}
\end{document}